\journal{..}
\begin{document}

\begin{frontmatter}

\author[1]{Hongxing Wang\corref{mycorrespondingauthor}}
\cortext[mycorrespondingauthor]{Corresponding author}
\ead{winghongxing0902@163.com}

\title{The  {${\mathfrak{m}}$}-core-EP inverse
in Minkowski space}

\author[1,2]{Hui Wu}
\ead{huiwumath168@163.com}

\author[1]{Xiaoji Liu}
\ead{xiaojiliu72@126.com}

\address[1]{School of Mathematics and Physics,
Guangxi University for Nationalities,
Nanning 530006, China}

\address[2]{School of Mathematics and Physics,
Hechi University,
Yizhou 546300, China}

\begin{abstract}
In this paper, we introduce the {${\mathfrak{m}}$}-core-EP inverse in  Minkowski space, consider its properties, and get several sufficient and necessary conditions for the existence of {{the {${\mathfrak{m}}$}-core-EP inverse}}. We give the ${\mathfrak{m}}$-core-EP decomposition  in  Minkowski space, and note that not every square matrix has the decomposition. Furthermore, by applying the {${\mathfrak{m}}$}-core-EP inverse and the ${\mathfrak{m}}$-core-EP decomposition,  we  introduce the {${\mathfrak{m}}$}-core-EP  order and give some characterizations of it.
\end{abstract}

\begin{keyword}
 Minkowski space;
{${\mathfrak{m}}$}-core-EP inverse;
{${\mathfrak{m}}$}-core-EP decomposition;
{${\mathfrak{m}}$}-core-EP  order
\MSC[2010]  15A09\sep 15A57\sep 15A24
\end{keyword}

\end{frontmatter}

\linenumbers

\section{Introduction}
\numberwithin{equation}{section}
\newtheorem{theorem}{T{\scriptsize HEOREM}}[section]
\newtheorem{lemma}[theorem]{L{\scriptsize  EMMA}}
\newtheorem{corollary}[theorem]{C{\scriptsize OROLLARY}}
\newtheorem{proposition}[theorem]{P{\scriptsize ROPOSITION}}
\newtheorem{remark}{R{\scriptsize  EMARK}}[section]
\newtheorem{definition}{D{\scriptsize  EFINITION}}[section]
\newtheorem{algorithm}{A{\scriptsize  LGORITHM}}[section]
\newtheorem{example}{E{\scriptsize  XAMPLE}}[section]
\newtheorem{problem}{P{\scriptsize  ROBLEM}}[section]
\newtheorem{assumption}{A{\scriptsize  SSUMPTION}}[section]

In this paper, we use the following notations. The symbol $\mathbb{C}_{n, n}$ is the set of $n\times n$ matrices with complex entries.
 $A^\ast$, ${\mathcal{R}}\left( {A} \right)$ and
${\rm rk}\left( A \right)$  represent the conjugate transpose, range {{space (or }}column space) and
rank of $A\in\mathbb{C}_{n, n}$, respectively. The smallest positive integer $k$,
which satisfies
  ${\rm rk}\left(A^{k+1}\right)
  ={\rm rk}\left(A^k\right)$,
   is called the index of $A$
   and
   is denoted by ${\rm Ind}(A)$.
   In particular,
\begin{align*}
\mathbb{C}^{\tiny{\mbox{\rm CM}}}_n
=
\left\{A\mid A\in \mathbb{C}_{n,n},\
{\rm rk}(A^2)={\rm rk}(A)\right\}.
\end{align*}

In 1907, Hermann Minkowski proposed the Minkowski space (shorted as $\mathcal{M}$).
In 2000,
 Meenakshi \cite{Meenakshi2000pnsaa1}
  introduced generalized inverse into  $\mathcal{M}$,
and got  its existence conditions.
The Minkowski metric matrix can be written as
\begin{align}
 \label{WWL-MCE-1}
G
=
 \left[{\begin{matrix}
 1
    & 0   \\
 0   & -I_{n-1}   \\
\end{matrix} }\right],
\
 G=G^{*}
\
{\rm and }
\
G^{2}=I_{n}.
\end{align}
The Minkowski adjoint of
a matrix $A\in \mathbb{C}_{n,n}$
is defined as
$A^\sim = G A^\ast  G $.
Let $A,B\in \mathbb{C}_{n,n}$.
It is easy to check that
$(AB)^{\sim}=B^{\sim}A^{\sim}$
and
$(A^{\sim})^{\sim}=A$.

The Minkowski inverse of
a matrix $A\in \mathbb{C}_{n,n}$
in $\mathcal{M}$
is defined
as the  matrix $X\in \mathbb{C}_{n,n}$
satisfying the following four conditions,
\cite{Meenakshi2000pnsaa1}:
\begin{align}
\label{WWL-MCE-lemma2.3}
(1)~AXA=A,
(2)~XAX=X,
(3^{\mathfrak{m}})~(AX)^\sim=AX,
(4^{\mathfrak{m}})~(XA)^\sim=XA,
\end{align}
 {{ where $X$
is denoted by $A^{\mathfrak{m}}$.}}
It is worthy to note that
the Minkowski inverse $A^{\mathfrak{m}}$
exists
if and only if
\begin{align}
\label{WWL-MCE-4-20191230-3}
{\rm rk}\left(A^\sim A\right)
 =
 {\rm rk}\left(A A^\sim\right)
 =
 {\rm rk}(A), \
 \mbox{\cite{Meenakshi2000pnsaa1}}.
\end{align}
If $A^{\mathfrak{m}}$ exists,
then it is unique,
\cite{Meenakshi2000pnsaa1,Petrovic2015amc157,Renardy1996laa53}.

In  \cite{Wang2019laa299},
Wang, Li and Liu
defined the ${\mathfrak{m}}$-core inverse
in  $\mathcal{M}$:
Let $A\in\mathbb{C}^{\tiny{\mbox{\rm CM}}}_n$,
if there exists $X\in\mathbb{C}_{n,n}$
satisfying the following three equations:
 \begin{align}
 \label{WWL-MCE-2}
(1)~AXA=A,
~(2^{l})~AX^{2}=X,
~(3^{m})~(AX)^{\sim}=AX,
\end{align}
then $X$ is called the ${\mathfrak{m}}$-core inverse of $A$,
 and is denoted by $A^{\tiny\textcircled{m}}$.
If $A^{\tiny\textcircled{m}}$ exists, then it is unique.
For $A\in\mathbb{C}^{\tiny{\mbox{\rm CM}}}_n $,
$A$  is {${\mathfrak{m}}$}-core invertible
if and only if
\begin{align}
\label{MC-Lemma-1.1}
{\rm rk}(A^{\sim}A)
=
 {\rm rk}(A).
\end{align}

\bigskip

 In \cite{Wang2016laa289},
 Wang introduced the core-EP decomposition:
 Let $A\in\mathbb{C}_{n,n}$
with
${\rm rk}(A^{k})=r$
and
${\mbox{\rm Ind}}(A)=k$.
Then
\begin{align}
\label{WWL-MCE-5}
A=A_1+A_2
\end{align}
where
$A_{1}\in\mathbb{C}^{\tiny{\mbox{\rm CM}}}_{n}$,
$A^{k}_{2}=0$
and
$A^{*}_{1}A_{2}=A_{2}A_{1}=0$.
Here one or both of $A_{1}$ and $A_{2}$ can be null.
Furthermore,
there exists an $n\times n$ unitary matrix $U$
such that
 \begin{align}
\label{WWL-MCE-666}
A_{1}=
U
\left[
\begin{matrix}
 T &  S\\
 0 &  0
\end{matrix}
\right]
U^{*}
\mbox{ \  and \ }
A_{2}=
U
\left[
\begin{matrix}
 0 &  0\\
   0          &  N
\end{matrix}
\right]
U^{*},
\end{align}
where
{{$S\in\mathbb{C}_{r, n-r}$,
$T\in\mathbb{C}_{r, r}$ is invertible,
$N\in\mathbb{C}_{n-r, n-r}$ is nilpotent,
and $N^k=0$.}}

When  $A\in\mathbb{C}^{\tiny{\mbox{\rm CM}}}_n$,
it is obvious that $N=0$, that is,  $A=A_1$.
Then the core inverse $A^{\tiny\textcircled{\#}}$  of $A$ is
\begin{align}
A^{\tiny\textcircled{\#}}
&
=
U
\left[
\begin{matrix}
 T^{-1} &  0\\
 0 &  0
\end{matrix}
\right]
U^{*},
\end{align}
where $A^{\tiny\textcircled{\#}}$ denotes the core inverse of $A$,
and the core inverse
$A^{\tiny\textcircled{\#}}$ of $A$
is the unique solution of
$AXA=A$,
$AX^{2}=X$, and $(AX)^\ast=AX$.

Denote
\begin{align}
\label{MC-8}
{U}^\ast G {U}
=
\left[
\begin{matrix}
  \widehat{G}_1   & \widehat{ G}_2  \\
  \widehat{G}_3   &  \widehat{G}_4
\end{matrix}
\right],
\end{align}
in which $\widehat{G}_1\in\mathbb{C}_{r, r}$.

When $A\in\mathbb{C}^{\tiny{\mbox{\rm CM}}}_n$ with
${\rm rk}(A^{\sim}A)
=
 {\rm rk}(A)$,
by applying (\ref{WWL-MCE-5}), (\ref{WWL-MCE-666})
and
(\ref{MC-8}),
we have
\begin{align}
\nonumber
{\rm rk}\left(A^\sim A\right)
&
=
{\rm rk}\left(GA^*GA\right)
=
{\rm rk}\left(GU
\left[
\begin{matrix}
 T^* &  0\\
 S^* &  0
\end{matrix}
\right]
U^{*}GU
\left[
\begin{matrix}
 T &  S\\
 0 &  0
\end{matrix}
\right]
U^{*}\right)
\\
&
\nonumber
=
{\rm rk}\left(
\left[
\begin{matrix}
 T^* &  0\\
 S^* &  0
\end{matrix}
\right]
\left[
\begin{matrix}
  \widehat{G}_1   & \widehat{ G}_2  \\
  \widehat{G}_3   &  \widehat{G}_4
\end{matrix}
\right]
\left[
\begin{matrix}
 T &  S\\
 0 &  0
\end{matrix}
\right]\right)
\\
&
\nonumber
=
{\rm rk}\left(
\left[
\begin{matrix}
 T^* \\
 S^*
\end{matrix}
\right]
\left[
\begin{matrix}
  \widehat{G}_1   & \widehat{ G}_2  \\
\end{matrix}
\right]
\left[
\begin{matrix}
 T &  S\\
 0 &  0
\end{matrix}
\right]\right)
=
{\rm rk}\left(
\left[
\begin{matrix}
 T^* \\
 S^*
\end{matrix}
\right]
\widehat{G}_1
\left[
\begin{matrix}
 T &  S\\
\end{matrix}
\right]\right)
\\
&
\nonumber
\leq {\rm rk} \left(\widehat{G}_1\right)
\leq r.
\end{align}
When ${\rm rk}\left(A^\sim A\right)=r$,
it follows that
${\rm rk} \left(\widehat{G}_1\right)=r$,
that is,
$\widehat{G}_1$ is invertible.
On the contrary,
suppose that
$\widehat{G}_1$ is invertible,
by applying
$
r={\rm rk}\left(A\right)\geq
{\rm rk}\left(A^\sim A\right)\geq
{\rm rk}\left(T^*
\widehat{G}_1
 T\right)=r$,
 we have $
{\rm rk}\left(A^\sim A\right)=r$.
Therefore,
we conclude that ${\rm rk}(A)={\rm rk}\left(A^\sim A\right)$
if and only if
$\widehat{G}_1$ is invertible.
It follows
by using (\ref{MC-Lemma-1.1})
 that
$A$  is {${\mathfrak{m}}$}-core invertible
if
and
only
if
$\widehat{G}_1$ is invertible.

Furthermore,
the {${\mathfrak{m}}$}-core inverse of $A$ can be expressed as form
\begin{align}
 \label{MC-211-1}
\mathcal{X}
=
 {U}\left[ {{\begin{matrix}
T^{-1}\widehat{G}_{1}^{-1}   & 0   \\
 0   & 0   \\
\end{matrix} }} \right]{U}^\ast G.
\end{align}
We give the outline of the proof as follows.

Since $G^*=G$,
we have
\begin{align}\label{MC-21111-1}
\widehat{G}_{1}^*=\widehat{G}_{1}.
\end{align}
By (\ref{WWL-MCE-5}), (\ref{WWL-MCE-666}),
(\ref{MC-8}),
(\ref{MC-211-1}) and (\ref{MC-21111-1}),
we have
\begin{align}
\nonumber
A\mathcal{X}A
&
=U
\left[
\begin{matrix}
 T &  S\\
 0 &  0
\end{matrix}
\right]
U^{*} {U}\left[ {{\begin{matrix}
T^{-1}\widehat{G}_{1}^{-1}   & 0   \\
 0   & 0   \\
\end{matrix} }} \right]{U}^\ast GU
\left[
\begin{matrix}
 T &  S\\
 0 &  0
\end{matrix}
\right]
U^{*}
\\
\label{20201109-1}
&
=U
\left[ {{\begin{matrix}
\widehat{G}_{1}^{-1}   & 0   \\
 0   & 0   \\
\end{matrix} }} \right]\left[
\begin{matrix}
  \widehat{G_1}   & \widehat{ G_2}  \\
  \widehat{G_3}   &  \widehat{G_4}
\end{matrix}
\right]
\left[
\begin{matrix}
 T &  S\\
 0 &  0
\end{matrix}
\right]
U^{*}
=U
\left[
\begin{matrix}
 T &  S\\
 0 &  0
\end{matrix}
\right]
U^{*}=A
,
\\
\nonumber
A\mathcal{X}^{2}
&
=U
\left[
\begin{matrix}
 T &  S\\
 0 &  0
\end{matrix}
\right]
U^{*} {U}\left[ {{\begin{matrix}
T^{-1}\widehat{G}_{1}^{-1}   & 0   \\
 0   & 0   \\
\end{matrix} }} \right]{U}^\ast G{U}\left[ {{\begin{matrix}
T^{-1}\widehat{G}_{1}^{-1}   & 0   \\
 0   & 0   \\
\end{matrix} }} \right]{U}^\ast G
\\
\nonumber
&
=U
\left[ {{\begin{matrix}
\widehat{G}_{1}^{-1}   & 0   \\
 0   & 0   \\
\end{matrix} }} \right]\left[
\begin{matrix}
  \widehat{G_1}   & \widehat{ G_2}  \\
  \widehat{G_3}   &  \widehat{G_4}
\end{matrix}
\right]\left[ {{\begin{matrix}
T^{-1}\widehat{G}_{1}^{-1}   & 0   \\
 0   & 0   \\
\end{matrix} }} \right]{U}^\ast G
\nonumber
\\
\label{20201109-2}
&
=U\left[ {{\begin{matrix}
T^{-1}\widehat{G}_{1}^{-1}   & 0   \\
 0   & 0   \\
\end{matrix} }} \right]{U}^\ast G=\mathcal{X},
\\
\nonumber
\left(A\mathcal{X}\right)^{\sim}
&
=\left(U
\left[
\begin{matrix}
 T &  S\\
 0 &  0
\end{matrix}
\right]
U^{*} {U}\left[ {{\begin{matrix}
T^{-1}\widehat{G}_{1}^{-1}   & 0   \\
 0   & 0   \\
\end{matrix} }} \right]{U}^\ast G\right)^{\sim}
=\left(U
\left[ {{\begin{matrix}
\widehat{G}_{1}^{-1}   & 0   \\
 0   & 0   \\
\end{matrix} }} \right]U^*G\right)^{\sim}
\\
\label{20201109-3}
&
=U
\left[ {{\begin{matrix}
\left(\widehat{G}_{1}^{-1}\right)^*   & 0   \\
 0   & 0   \\
\end{matrix} }} \right]U^*G
=U
\left[ {{\begin{matrix}
\widehat{G}_{1}^{-1}   & 0   \\
 0   & 0   \\
\end{matrix} }} \right]U^*G
=A\mathcal{X}.
\end{align}
Therefore,
by applying (\ref{20201109-1}), (\ref{20201109-2}), (\ref{20201109-3}) and (\ref{WWL-MCE-2})
we conclude that
$\mathcal{X}$
is the {${\mathfrak{m}}$}-core inverse of $A$, and
\begin{align}
 \label{20201109-4}
A^{\tiny\textcircled{m}}
=
 {U}\left[ {{\begin{matrix}
T^{-1}\widehat{G}_{1}^{-1}   & 0   \\
 0   & 0   \\
\end{matrix} }} \right]{U}^\ast G.
\end{align}

{{In recent years,
generalized core inverse has been extensively studied by many researchers,
particularly to the core-EP inverse.}}
The core-EP inverse of $A$
is the unique {{matrix \cite{Prasad2014lma792}}},
which satisfies
\begin{align}
(1^k)XA^{k+1}=A^{k},
(2)XAX=X,
(3)(AX)^{*}=AX,
\left(4^r\right)\mathcal{R}(X)\subseteq \mathcal{R}(A^{k}),
\end{align}
and is denoted by $A^{\tiny\textcircled{\dag}} $,
where ${\mbox{\rm Ind}}(A)=k$.

In \cite{Prasad2014lma7922}, Ma and Stanimirovi\'{c} studied the
characterizations, approximation and perturbations of the core-EP inverse, and applied SMS algorithm for computing the
core-EP inverse.
In \cite{Prasad2014lma7925}, Ferreyra, Levis and Thome proposed generalized the core-EP inverse to rectangular matrices and gave properties of the weighted core-EP inverse.
In \cite{Prasad2014lma7923},
Zhou et al proposed
three limit representations of the core-EP inverse.
In \cite{Prasad2014lma7929}, Mosi\'{c} studied the weighted core-EP inverse of an operator between Hilbert space.
In \cite{Prasad2014lma7926}, Ji and Wei applied the core-EP, weighted core-EP inverse of matrices to study constrained systems of linear equations.
More details of the core-EP inverse  and its applications  can be seen in
\cite{Prasad2014lma7928,Gao2018ca38,Prasad2014lma7922,Prasad2014lma7927,
Prasad2014lma79281,Sahoocam2020cam168}.

In this paper,
we introduce a generalization of the {${\mathfrak{m}}$}-core inverse
for square matrices of an arbitrary index.
We also give some of its characterizations,
properties and applications.

\section{The m-core-EP inverse in Minkowski space }

Let
$A\in\mathbb{C}_{n, n}$
with
${\mbox{\rm Ind}}(A)=k$,
 $A=A_1+A_2$ be of the core-EP decomposition of $A$,
and
$A_1$ and $A_2$ be as in  (\ref{WWL-MCE-666}).
Then
$T\in\mathbb{C}_{r, r}$ is invertible
and
\begin{align}
\label{WWL-MCE-6}
A^{k}=
U
\left[
\begin{matrix}
 T^{k} &  \widehat{T}\\
   0          &  0
\end{matrix}
\right]
U^{*},
\end{align}
and
\begin{align}
\label{WWL-MCE-7}
A^{k+1}=
U
\left[
\begin{matrix}
 T^{k+1} &  \overline{T}\\
   0          &  0
\end{matrix}
\right]
U^{*},
\end{align}
where
$\widehat{T}=T^{k-1}S+T^{k-2}SN+\cdots+TSN^{k-2}+SN^{k-1}$,
and
$\overline{T}=T^{k}S+T^{k-1}SN+\cdots+TSN^{k-1}+SN^{k}
=T^{k}S+T^{k-1}SN+\cdots+TSN^{k-1}$.
It is obvious that
$T^{-1}\overline{T}=\widehat{T}$.

When
$A\in\mathbb{C}^{\tiny{\mbox{\rm CM}}}_n$,
it is easy to check that
\begin{align}
\label{WWL-MCE-lemma2.6}
A^{\sharp}
&
=
 {U}
\left[
\begin{matrix}
   T^{-1} &    T^{-2}S  \\
       0     &       0
\end{matrix}
\right]
{U}^{*}.
\end{align}

\begin{lemma}
\label{WWL-MCE-734}
Let
$A\in\mathbb{C}_{n, n}$
with
${\mbox{\rm Ind}}(A)=k$,
then
${\mbox{\rm rk}}\left(A^{k}\right)
=
{\mbox{\rm rk}}\left(\left(A^{k}\right)^{\sim}A^{k}\right)$
if and only if
$G_{1}$ is invertible,
in which
$G_1\in\mathbb{C}_{{\mbox{\rm rk}}\left(A^{k}\right),
{\mbox{\rm rk}}\left(A^{k}\right)}$
is given as in
\begin{align}
 \label{WWL-MCE-4}
U^{*}GU
=
 \left[{\begin{matrix}
 G_{1}
    & G_{2}   \\
 G_{3}   & G_{4}   \\
\end{matrix} }\right],
\end{align}
{{ where $U$ is defined as in (\ref{WWL-MCE-666}).}}
\end{lemma}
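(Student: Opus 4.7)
The plan is to reduce the rank condition to one involving only the $r\times r$ block $G_1$ by a direct block-matrix computation using the core-EP decomposition (\ref{WWL-MCE-666}) and the explicit form (\ref{WWL-MCE-6}) of $A^k$. Since $G$ is invertible (because $G^{2}=I_n$) and $U$ is unitary, left/right multiplication by $GU$ and $U^\ast$ preserve ranks, which is what makes the reduction work.

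First I would write
\[
A^{k} = U\!\begin{pmatrix} T^{k} & \widehat T \\ 0 & 0\end{pmatrix}\! U^{\ast}, \qquad (A^{k})^{\ast} = U\!\begin{pmatrix} (T^{k})^{\ast} & 0 \\ \widehat T^{\ast} & 0\end{pmatrix}\! U^{\ast},
\]
and substitute into $(A^{k})^{\sim} A^{k} = G(A^{k})^{\ast} G\, A^{k}$. Inserting the block form (\ref{WWL-MCE-4}) of $U^{\ast}GU$ and multiplying the block matrices out, the zero rows and columns in $A^{k}$ and $(A^{k})^{\ast}$ annihilate $G_{2}, G_{3}, G_{4}$, so the product collapses to
\[
(A^{k})^{\sim} A^{k} = GU\!\begin{pmatrix} (T^{k})^{\ast} \\ \widehat T^{\ast}\end{pmatrix}\! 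G_{1} \begin{pmatrix} T^{k} & \widehat T\end{pmatrix}\! U^{\ast}.
\]

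Second, since $GU$ and $U^{\ast}$ are invertible, the rank of $(A^{k})^{\sim}A^{k}$ equals the rank of the middle factor $M_{1}G_{1}M_{2}$, where $M_{1} = \bigl(\begin{smallmatrix}(T^{k})^{\ast} \\ \widehat T^{\ast}\end{smallmatrix}\bigr)$ has full column rank $r$ (its top $r\times r$ block $(T^{k})^{\ast}$ is invertible) and $M_{2} = \bigl(\begin{smallmatrix}T^{k} & \widehat T\end{smallmatrix}\bigr)$ has full row rank $r$. Since left multiplication by a full-column-rank matrix and right multiplication by a full-row-rank matrix each preserve rank, we get ${\rm rk}(M_{1}G_{1}M_{2}) = {\rm rk}(G_{1})$.

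Combining these steps, ${\rm rk}\bigl((A^{k})^{\sim}A^{k}\bigr) = {\rm rk}(G_{1})$. Since ${\rm rk}(A^{k}) = r$ and $G_{1}$ is $r\times r$, the equality ${\rm rk}\bigl((A^{k})^{\sim}A^{k}\bigr) = {\rm rk}(A^{k})$ is equivalent to ${\rm rk}(G_{1}) = r$, i.e., to the nonsingularity of $G_{1}$. The main obstacle is purely bookkeeping: verifying that $G_{2}, G_{3}, G_{4}$ really do drop out of the product thanks to the zero blocks in the core-EP form of $A^{k}$. No conceptual difficulty is expected.
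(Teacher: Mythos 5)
Your proof is correct and is essentially the argument the paper itself relies on: the paper's one-line proof observes that $A^k\in\mathbb{C}^{\mathrm{CM}}_n$ and invokes the ${\mathfrak{m}}$-core invertibility criterion (\ref{MC-Lemma-1.1}), whose justification in Appendix-1 is exactly your block factorization showing that $G_2$, $G_3$, $G_4$ drop out and that the rank collapses to ${\rm rk}(G_1)$ after stripping the full-column-rank and full-row-rank factors. You have simply inlined that computation for $A^k$ directly, which makes the lemma self-contained.
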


\begin{proof}
Let
$A\in\mathbb{C}_{n, n}$
 be as in (\ref{WWL-MCE-5}) and (\ref{WWL-MCE-666}),
 $A^{k}$ be as in (\ref{WWL-MCE-6}),
and ${U}^\ast G {U}$ is given as in (\ref{WWL-MCE-4}).
Then
\begin{align}
\nonumber
{\rm rk}\left((A^{k})^\sim A^{k}\right)
&
=
{\rm rk}\left(G(A^{k})^{*}GA^{k}\right)
=
{\rm rk}\left(GU
\left[
\begin{matrix}
 (T^{k})^* &  0\\
 \widehat{T}^* &  0
\end{matrix}
\right]
U^{*}GU
\left[
\begin{matrix}
 T^{k} &  \widehat{T}\\
 0 &  0
\end{matrix}
\right]
U^{*}\right)
\\
&
\nonumber
=
{\rm rk}\left(
\left[
\begin{matrix}
 (T^{k})^* &  0\\
 \widehat{T}^* &  0
\end{matrix}
\right]
\left[
\begin{matrix}
 G_1   & G_2  \\
  G_3   &  G_4
\end{matrix}
\right]
\left[
\begin{matrix}
 T^{k} &  \widehat{T}\\
 0 &  0
\end{matrix}
\right]\right)
\\
&
\nonumber
=
{\rm rk}\left(
\left[
\begin{matrix}
 (T^{k})^* \\
 \widehat{T}^*
\end{matrix}
\right]
\left[
\begin{matrix}
  G_1   & G_2  \\
\end{matrix}
\right]
\left[
\begin{matrix}
 T^{k} &  \widehat{T}\\
 0 &  0
\end{matrix}
\right]\right)
=
{\rm rk}\left(
\left[
\begin{matrix}
 (T^{k})^* \\
 \widehat{T}^*
\end{matrix}
\right]
G_1
\left[
\begin{matrix}
 T^{k} &  \widehat{T}\\
\end{matrix}
\right]\right)
\\
&
\nonumber
\leq {\rm rk} \left(G_1\right)
\leq r.
\end{align}
When ${\rm rk}\left((A^{k})^\sim A^{k}\right)=r$,
it follows that
${\rm rk} \left({G}_1\right)=r$,
that is,
${G}_1$ is invertible.
On the contrary,
suppose that
${G}_1$ is invertible,
by applying
$
r={\rm rk}\left(A^{k}\right)\geq
{\rm rk}\left((A^{k})^\sim A^{k}\right)\geq
{\rm rk}\left((T^{k})^*
G_1
 T^{k}\right)=r$,
 we have $
{\rm rk}\left((A^{k})^\sim A^{k}\right)=r$.
Therefore,
we conclude that ${\rm rk}(A^{k})={\rm rk}\left((A^{k})^\sim A^{k}\right)$
if and only if
${G}_1$ is invertible.
\end{proof}

\begin{remark}
\label{WZHBAP-MT1-Remark}
Let
$A\in\mathbb{C}_{n, n}$
with
${\mbox{\rm Ind}}(A)=k$
and
${\mbox{\rm rk}}\left(A^{k}\right)
=
{\mbox{\rm rk}}\left(\left(A^{k}\right)^{\sim}A^{k}\right)$.
According to the
$(U^{*}GU)^{*}=U^{*}GU$,
we obtain
\begin{align*}
G_{1}^{*}=G_{1}
\mbox{\  and \ }
 G_{3}^{*}=G_{2}.
\end{align*}
\end{remark}

Let $A\in\mathbb{C}_{n, n}$ with
${\mbox{\rm Ind}}(A)=k$, and consider 
\begin{align}\label{WZHBAP-MT11-Remark}
(1)XAX=X,
(2^k)XA^{k+1}=A^{k},
\left(3^{\mathfrak{m}}\right)(AX)^\sim=AX,
\left(4^r\right)\mathcal{R}(X)\subseteq \mathcal{R}\left(A^{k}\right).
\end{align}

\begin{theorem}
Let $A\in\mathbb{C}_{n,n}$
with ${\mbox{\rm Ind}}(A)=k$.
If there exists a solution of the four matrix equations $(1)$, $(2^{k})$, $(3^{m})$ and $(4^{r})$,
then the solution is unique.
\end{theorem}
\begin{proof}
Let
$A\in\mathbb{C}_{n, n}$
 be as given in (\ref{WWL-MCE-5}) and (\ref{WWL-MCE-666}).
We suppose that both
$X_{1}$ and $Y_{1}$ satisfy  (\ref{WZHBAP-MT11-Remark}). {{Let $U$ be defined as in (\ref{WWL-MCE-666}).}}
Denote
\begin{align}
\label{WWL-MCE-21}
X_{1}=
U
\left[
\begin{matrix}
 X_{11} &  X_{12}\\
   X_{21}     & {{ X_{22}}}
\end{matrix}
\right]
U^{*}
\mbox{ \ and \ }
Y_{1}=
U
\left[
\begin{matrix}
 Y_{11} &  Y_{12}\\
   Y_{21}     &  Y_{22}
\end{matrix}
\right]
U^{*}.
\end{align}
Next, we prove $X_{1}=Y_{1}$.

{{ From}}
$X_{1}A^{k+1}=A^{k}$
  and
$Y_{1}A^{k+1}=A^{k}$,
  we have
\begin{align}\label{WWL-MCE-20}
X_{1}A^{k+1}=Y_{1}A^{k+1}.
\end{align}
By applying (\ref{WWL-MCE-21}) and (\ref{WWL-MCE-20}), we have
\begin{align}
\nonumber
U
\left[
\begin{matrix}
   X_{11}  &      X_{12}  \\
       X_{21}   &       X_{22}
\end{matrix}
\right]
U^{*}U
\left[
\begin{matrix}
   T^{k+1}  &      \overline{T}  \\
       0   &       0
\end{matrix}
\right]
U^{*}
\nonumber
&
=
U
\left[
\begin{matrix}
   Y_{11} &      Y_{12} \\
       Y_{21}     &       Y_{22}
\end{matrix}
\right]
U^{*}U
\left[
\begin{matrix}
   T^{k+1}  &      \overline{T}  \\
       0   &       0
\end{matrix}
\right]
U^{*},
\end{align}
that is,
\begin{align}
\nonumber
\left[
\begin{matrix}
   X_{11}T^{k+1}       &  X_{11}\overline{T}  \\
       X_{21}T^{k+1}   &  X_{21}\overline{T}
\end{matrix}
\right]
\nonumber
&
=
\left[
\begin{matrix}
   Y_{11}T^{k+1} &      Y_{11}\overline{T} \\
       Y_{21}T^{k+1}     &       Y_{21}\overline{T}
\end{matrix}
\right].
\end{align}
Since $T$ is invertible,
we get
\begin{align}
X_{11}=Y_{11}
\mbox{ \  and \ }
X_{21}=Y_{21}.
\end{align}

Applying
$\mathcal{R}(X_{1})\subseteq \mathcal{R}(A^{k})$
gives that
there exists a matrix $Z$ such that
\begin{align}\label{WWL-MCE-222}
X_{1}=A^{k}Z.
\end{align}
Write
\begin{align}
\label{WWL-MCE-2111}
Z=
U
\left[
\begin{matrix}
 Z_{11} &  Z_{12}\\
   Z_{21}     &  Z_{22}
\end{matrix}
\right]
U^{*}.
\end{align}
Substituting
(\ref{WWL-MCE-21}) and (\ref{WWL-MCE-2111})
into (\ref{WWL-MCE-222}),
we have
\begin{align}
\nonumber
\left[
\begin{matrix}
   X_{11}  &      X_{12}  \\
       X_{21}   &       X_{22}
\end{matrix}
\right]
\nonumber
&
=
\left[
\begin{matrix}
   T^{k}Z_{11}+\widehat{T}Z_{21} &    T^{k}Z_{12}+\widehat{T}Z_{22}   \\
       0     &       0
\end{matrix}
\right],
\end{align}
so $X_{21}=0$ and $X_{22}=0$.
Similarly,
we get $Y_{21}=0$ and $Y_{22}=0$.

Since $(AX_{1})^{\thicksim}=AX_{1}$,
\begin{align}
\label{WWL-MCE-223}
GX_{1}^{*}A^{*}G=AX_{1}.
\end{align}
By applying
{{ (\ref{WWL-MCE-666}),}}
(\ref{WWL-MCE-4}),
($\ref{WWL-MCE-21}$),
(\ref{WWL-MCE-223}),
$X_{21}=0$ and $X_{22}=0$,
we have
\begin{align}
\nonumber
&
0
=
GX_{1}^{*}A^{*}G-AX_{1}
\\
\nonumber
&
=
GU
\left[
\begin{matrix}
   X_{11}^{*}  &      {{  0}}  \\
       X_{12}^{*}   &       {{  0}}
\end{matrix}
\right]
\left[
\begin{matrix}
   T^{*}  &      0  \\
       S^{*}   &       N^{*}
\end{matrix}
\right]
U^{*}G
-
U
\left[
\begin{matrix}
   T &      S \\
       0     &       N
\end{matrix}
\right]
\left[
\begin{matrix}
   X_{11}  &      X_{12}  \\
        {{  0}}  &      {{  0}}
\end{matrix}
\right]
U^{*}
\\
\nonumber
&
=
GU \left(\left[
\begin{matrix}
   X_{11}^{*}T^{*}G_{1}  &     X_{11}^{*}T^{*}G_{2}   \\
       X_{12}^{*}T^{*}G_{1}   &       X_{12}^{*}T^{*}G_{2}
\end{matrix}
\right]
-
\left[
\begin{matrix}
   G_{1}TX_{11}  &      G_{1}TX_{12}  \\
       G_{3}TX_{11}   &       G_{3}TX_{12}
\end{matrix}
\right]\right)U^\ast.
\end{align}
Then
$X_{11}^{*}T^{*}G_{2}=G_{1}TX_{12}$,
that is,
$X_{12}=T^{-1}G_{1}^{-1}X_{11}^{*}T^{*}G_{2}$.
Similarly,
we get
$Y_{12}=T^{-1}G_{1}^{-1}Y_{11}^{*}T^{*}G_{2}$.
{{Applying $X_{11}=Y_{11}$, we have $X_{12}=Y_{12}.$}}
Therefore, we get $X_{1}=Y_{1}$.
\end{proof}

\begin{definition}
\label{WWL-MCE-df-3.1}
Let
$A\in\mathbb{C}_{n, n}$
with
${\mbox{\rm Ind}}(A)=k$.
If there exists $X\in \mathbb{C}_{n,n}$
satisfying the system of equations (\ref{WZHBAP-MT11-Remark}),
then
$X$ is called the  ${\mathfrak{m}}$-core-EP inverse of $A$
in $\mathcal{M}$,
and is denoted by $A^{{\tiny{\textcircled{E}}}}$.
\end{definition}

In the following theorems of this section,
by using
matrix decomposition,
Drazin inverse,
group inverse,
{${\mathfrak{m}}$}-core inverse
and
Minkowski inverse,
we derive several characterizations of
the {${\mathfrak{m}}$}-core-EP inverse.

\begin{theorem}
\label{WWL-MCE-TH3.2}
Let $A\in\mathbb{C}_{n, n}$ with ${\mbox{\rm Ind}}(A)=k$.
Then $A$ is {${\mathfrak{m}}$}-core-EP invertible
if and only if
\begin{align}
\label{WWL-MCE-th3.2-2}
{\mbox{\rm rk}}\left(A^{k}\right)
=
{\mbox{\rm rk}}\left(\left(A^{k}\right)^{\sim}A^{k}\right).
\end{align}
Furthermore,
let the decomposition of $A$
be as in (\ref{WWL-MCE-5}),
then $A^{{\tiny{\textcircled{E}}}}$
has the decomposition in the form
\begin{align}
\label{WWL-MCE-th3.2}
A^{{\tiny{\textcircled{E}}}}
=
U
\left[
\begin{matrix}
 T^{-1}G_{1}^{-1} &  0\\
   0     &  0
\end{matrix}
\right]
U^{*}G,
\end{align}
where $G_{1}$ is of the form (\ref{WWL-MCE-4}).
\end{theorem}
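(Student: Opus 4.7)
The plan is to leverage the core-EP decomposition from (\ref{WWL-MCE-666}), Lemma \ref{WWL-MCE-734}, and the block form of $G$ in (\ref{WWL-MCE-4}) so that everything reduces to $2\times 2$ block calculations in the $U$-basis. Write
\[
A = U\begin{bmatrix} T & S \\ 0 & N \end{bmatrix}U^{*},
\]
and recall from (\ref{WWL-MCE-6})--(\ref{WWL-MCE-7}) the explicit forms of $A^{k}$ and $A^{k+1}$, together with the identity $T^{-1}\overline{T}=\widehat{T}$.

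For \emph{sufficiency}, I assume (\ref{WWL-MCE-th3.2-2}) and invoke Lemma \ref{WWL-MCE-734} to get that $G_{1}$ is nonsingular. I then take $X$ to be the matrix on the right-hand side of (\ref{WWL-MCE-th3.2}) as a candidate and verify the four axioms of Definition \ref{WWL-MCE-df-3.1} by direct block multiplication. Condition $(4^{r})$ is immediate since the last $n-r$ rows of the ``inner'' block of $X$ are zero, so $\mathcal{R}(X)\subseteq \mathcal{R}(A^{k})$. Condition $(2^{k})$: multiplying $X$ by $A^{k+1}$ produces $U\bigl[\begin{smallmatrix}T^{k} & T^{-1}\overline{T}\\ 0 & 0\end{smallmatrix}\bigr]U^{*}$, and the identity $T^{-1}\overline{T}=\widehat{T}$ gives $A^{k}$. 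Condition $(1)$ follows because $XA$ is upper-block-triangular with identity in the $(1,1)$ position, so $XAX=X$. For $(3^{\mathfrak{m}})$, I first compute $AX = U\bigl[\begin{smallmatrix}G_{1}^{-1} & 0\\ 0 & 0\end{smallmatrix}\bigr]U^{*}G$, then apply $(\cdot)^{\sim}=G(\cdot)^{*}G$ and use the symmetry $G_{1}^{*}=G_{1}$ from Remark \ref{WZHBAP-MT1-Remark} to conclude $(G_{1}^{-1})^{*}=G_{1}^{-1}$, which gives $(AX)^{\sim}=AX$.

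For \emph{necessity}, suppose $A^{{\tiny{\textcircled{E}}}}$ exists and write it blockwise as in (\ref{WWL-MCE-21}). Axiom $(4^{r})$, combined with the zero bottom rows of $A^{k}$, forces $X_{21}=0$ and $X_{22}=0$. Axiom $(2^{k})$ then yields $X_{11}T^{k+1}=T^{k}$, i.e.\ $X_{11}=T^{-1}$. Next I impose $(3^{\mathfrak{m}})$: using $G = U\bigl[\begin{smallmatrix}G_{1} & G_{2}\\ G_{3} & G_{4}\end{smallmatrix}\bigr]U^{*}$ I expand both sides of $G(AX)^{*}G = AX$ in block form. The $(1,1)$ block of this matrix identity reads
\[
\bigl(G_{1}+G_{2}X_{12}^{*}T^{*}\bigr)G_{1}=I_{r},
\]
which says $G_{1}$ has a left inverse and hence, being square, is invertible. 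By Lemma \ref{WWL-MCE-734} this is exactly (\ref{WWL-MCE-th3.2-2}). The explicit formula (\ref{WWL-MCE-th3.2}) then follows from uniqueness: the $X$ built in the sufficiency step satisfies all four axioms and is therefore equal to $A^{{\tiny{\textcircled{E}}}}$.

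The main obstacle is the necessity direction, specifically extracting the invertibility of $G_{1}$ from the Minkowski-symmetry condition $(3^{\mathfrak{m}})$. The condition mixes $X_{12}$, $T$, and the blocks of $G$, so one must be careful to read off the right block entry of $G(AX)^{*}G - AX = 0$; picking the $(1,1)$ block is what isolates $G_{1}$ as a left factor of an identity matrix. The rest of the argument is essentially block bookkeeping.
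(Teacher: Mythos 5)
Your proof is correct, but your necessity direction takes a genuinely different route from the paper's. For sufficiency both arguments coincide: invoke Lemma \ref{WWL-MCE-734} to get $G_{1}$ invertible and verify the four axioms of Definition \ref{WWL-MCE-df-3.1} for the candidate $X$ in (\ref{WWL-MCE-th3.2}) by block multiplication (your checks of $(1)$, $(2^{k})$, $(3^{\mathfrak{m}})$, $(4^{r})$ match the paper's computations, including the use of $T^{-1}\overline{T}=\widehat{T}$ and $G_{1}^{*}=G_{1}$). For necessity, however, the paper argues coordinate-free: from $(2^{k})$ and $(4^{r})$ it gets $\mathcal{R}(X)=\mathcal{R}(A^{k})$, writes $X=A^{k}P$ with $P$ nonsingular, deduces ${\rm rk}(X)={\rm rk}(X^{2})={\rm rk}(A^{k})$ from $XAX=X$, and then uses $X^{2}=XP^{\sim}A^{\sim}(A^{k})^{\sim}A^{k}P$ to squeeze ${\rm rk}(A^{k})\leq{\rm rk}((A^{k})^{\sim}A^{k})\leq{\rm rk}(A^{k})$, obtaining (\ref{WWL-MCE-th3.2-2}) directly without ever touching $G_{1}$. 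You instead work entirely in the $U$-basis: $(4^{r})$ kills the bottom block rows of $X$, $(2^{k})$ forces $X_{11}=T^{-1}$, and the $(1,1)$ block of $G(AX)^{*}G=AX$ yields $(G_{1}+G_{2}X_{12}^{*}T^{*})G_{1}=I_{r}$, so the square matrix $G_{1}$ is invertible and Lemma \ref{WWL-MCE-734} converts this back into the rank condition. Both are valid; the paper's rank-chain argument is shorter and independent of the chosen unitary $U$, while your block computation is more constructive in that it simultaneously pins down the entries of $A^{{\tiny{\textcircled{E}}}}$ (it essentially reruns the paper's uniqueness proof) and isolates exactly which block of $G$ controls existence. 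Note only that your final step relies on the ``$G_{1}$ nonsingular $\Rightarrow$ rank equality'' direction of Lemma \ref{WWL-MCE-734}, which the lemma asserts but whose proof in the paper is really carried out in Appendix-1; this is a citation issue, not a gap.
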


\begin{proof}
``$\Rightarrow$''
From the equation $(2^k)$, we have
$\mathcal{R}(A^{k})\subseteq\mathcal{R}(X)$.
It follows from the equation $(4^{r})$ that $\mathcal{R}(A^{k})=\mathcal{R}(X)$.
Therefore, there exists a nonsingular $P$ such that $X=A^k P$.

{{ From}}
$XAX
=
XAA^kP
=
X\left(A^kP\right)P^{-1}AP
=
X^2P^{-1}AP=X$,
we get
${\mbox{\rm rk}}(X)
=
{\mbox{\rm rk}}(X^2P^{-1}AP)
\leq
{\mbox{\rm rk}}(X^2)
\leq
{\mbox{\rm rk}}(X)$,
that is,
\begin{align}
\label{WWL-MCE-2-20191230}
{\mbox{\rm rk}}(X)
=
{\mbox{\rm rk}}(X^2)
=
{\mbox{\rm rk}}(A^k).
\end{align}
Applying
$XAX=X$
and
$(AX)^\sim=AX$,
we get
$
X^2
=
XAX A^kP
=
XP^\sim  A^\sim \left(A^k\right)^\sim A^kP$,
then
{{ ${\mbox{\rm rk}}\left(X^2\right)
\leq
{\mbox{\rm rk}}\left(\left(A^{k}\right)^{\sim}A^{k}\right)
\leq
{\mbox{\rm rk}}\left(A^{k}\right)$.}}

It follows from (\ref{WWL-MCE-2-20191230})
that
we get (\ref{WWL-MCE-th3.2-2}).

``$\Leftarrow$''
Let ${\mbox{\rm rk}}\left(A^{k}\right)
=
{\mbox{\rm rk}}\left(\left(A^{k}\right)^{\sim}A^{k}\right)$.
Applying Lemma \ref{WWL-MCE-734},
we get that $G_{1}$ is invertible.
Write
\begin{align}
\label{WWL-MCE-8}
X=
U
\left[
\begin{matrix}
 T^{-1}G_{1}^{-1} &  0\\
   0     &  0
\end{matrix}
\right]
U^{*}G,
\end{align}
where
$T\in\mathbb{C}_{r, r}$
is given in (\ref{WWL-MCE-666}).

By applying (\ref{WWL-MCE-4}) and (\ref{WWL-MCE-8}), we have
\begin{align}\nonumber
XAX
&=
U
\left[
\begin{matrix}
   T ^{-1}G^{-1}_{1} &      0  \\
       0   &       0
\end{matrix}
\right]
U^{*}GU
\left[
\begin{matrix}
   T &       S \\
       0     &       N
\end{matrix}
\right]
U^{*}U
\left[
\begin{matrix}
   T^{-1}G^{-1}_{1} &       0 \\
       0     &       0
\end{matrix}
\right]
U^{*}G
\\
\nonumber
&
=
U
\left[
\begin{matrix}
  T^{-1}G^{-1}_{1}  &   0 \\
      0     &       0
\end{matrix}
\right]
\left[
\begin{matrix}
 G_{1} &     G_{2} \\
       G_{3}     &       G_{4}
\end{matrix}
\right]
\left[
\begin{matrix}
   T &       S \\
       0     &       N
\end{matrix}
\right]
\left[
\begin{matrix}
  T^{-1}G^{-1}_{1}  &   0 \\
       0     &       0
\end{matrix}
\right]
U^{*}G
\\
\nonumber
&
=
U
\left[
\begin{matrix}
  T^{-1}G_{1}^{-1}   &  0  \\
       0     &      0
\end{matrix}
\right]
U^{*}G=X.
\end{align}
By applying (\ref{WWL-MCE-7}) and (\ref{WWL-MCE-8}), then
\begin{align}\nonumber
XA^{k+1}
&=
U
\left[
\begin{matrix}
   T ^{-1}G^{-1}_{1} &      0  \\
       0   &       0
\end{matrix}
\right]
U^{*}GU
\left[
\begin{matrix}
   T^{k+1} &       \overline{T} \\
       0     &       0
\end{matrix}
\right]
U^{*}
\\
\nonumber
&
=
U
\left[
\begin{matrix}
  T^{-1}G^{-1}_{1}  &   0 \\
      0     &       0
\end{matrix}
\right]
\left[
\begin{matrix}
 G_{1} &     G_{2} \\
       G_{3}     &       G_{4}
\end{matrix}
\right]
\left[
\begin{matrix}
  T^{k+1}  &   \overline{T} \\
       0     &       0
\end{matrix}
\right]
U^{*}
\\
\nonumber
&
=
U
\left[
\begin{matrix}
  T^{-1}   &  T^{-1}G_{1}^{-1}G_{2}  \\
       0     &      0
\end{matrix}
\right]
\left[
\begin{matrix}
   T^{k+1} &       \overline{T} \\
       0     &       0
\end{matrix}
\right]
U^{*}
\\
\nonumber
&
=
U
\left[
\begin{matrix}
  T^{k}   &  T^{-1}\overline{T}  \\
       0     &      0
\end{matrix}
\right]
U^{*}=
U
\left[
\begin{matrix}
  T^{k}   &  \widehat{T}  \\
       0     &      0
\end{matrix}
\right]
U^{*}=A^{k}.
\end{align}
By applying (\ref{WWL-MCE-4}) and (\ref{WWL-MCE-8}), we have
\begin{align}\nonumber
AX
=
U
\left[
\begin{matrix}
 T  &  S  \\
       0     &      N
\end{matrix}
\right]
\left[
\begin{matrix}
   T^{-1}G^{-1}_{1} &       0 \\
       0     &       0
\end{matrix}
\right]{{ U^{*}G}}
=
U
\left[
\begin{matrix}
  G^{-1}_{1}   &  0  \\
       0     &      0
\end{matrix}
\right]
U^{*}G.
\end{align}
Since
\begin{align}
\nonumber
(AX)^\sim
&=G(AX)^{*}G
=
GG^{*}U
\left[
\begin{matrix}
   (G_{1}^{-1})^{*}  &      0  \\
       0   &       0
\end{matrix}
\right]
U^{*}G
=
U
\left[
\begin{matrix}
  (G_{1}^{-1})^{*}  &   0 \\
      0     &       0
\end{matrix}
\right]
U^{*}G,
\end{align}
 applying Remark \ref{WZHBAP-MT1-Remark},
 we get $(AX)^\sim=AX$.

Let
\begin{align}
\label{WWL-MCE-X}
\overline{X}
&
=
U
\left[
\begin{matrix}
   (T^{k+1})^{-1} &     (T^{k+1})^{-1}G_{1}^{-1}G_{2}  \\
       0     &       0
\end{matrix}
\right]
U^{*}.
\end{align}
It follows
from applying
(\ref{WWL-MCE-6}),
 (\ref{WWL-MCE-8}), (\ref{WWL-MCE-X})
 and
\begin{align}
\nonumber
A^{k} \overline{X}
&
=
A^{k}U
\left[
\begin{matrix}
   (T^{k+1})^{-1} &     (T^{k+1})^{-1}G_{1}^{-1}G_{2}  \\
       0     &       0
\end{matrix}
\right]
U^{*}
=
U
\left[
\begin{matrix}
 T^{-1}G_{1}^{-1} &  0\\
   0     &  0
\end{matrix}
\right]
U^{*}G=X,
\end{align}
that  $\mathcal{R}(X)\subseteq \mathcal{R}(A^{k})$.

Hence, $A$ is {${\mathfrak{m}}$}-core-EP invertible,
and
$A^{{\tiny{\textcircled{E}}}}$ is of the  form (\ref{WWL-MCE-th3.2}).
\end{proof}

It is  well known that every complex matrix is core-EP invertible.
But,
from (\ref{WWL-MCE-th3.2-2})
we see that
it is  untrue to say that all complex matrices are {${\mathfrak{m}}$}-core-EP invertible.
Even when
one matrix  $A$ is core-EP invertible
and is
{${\mathfrak{m}}$}-core-EP invertible,
it is generally true that
the two generalized inverses are different.

\begin{example}
\label{WWL-MCE-example-3.1}
Let
$A
=
 \left[{\begin{matrix}
 \frac{3-\sqrt6+\sqrt3-\sqrt2}{6} &  \frac{\sqrt6+2\sqrt3-2\sqrt2}{6}
 & \frac{3+\sqrt6-\sqrt3+\sqrt2}{6}  \\
   \frac{1}{3\sqrt2} & \frac{2}{3\sqrt2} & -\frac{1}{3\sqrt2} \\
   \frac{3-\sqrt6+\sqrt3+\sqrt2}{6} &  \frac{\sqrt6+2\sqrt3+2\sqrt2}{6}
    & \frac{3+\sqrt6-\sqrt3-\sqrt2}{6}
\end{matrix} }\right]$
with
 ${\mbox{\rm Ind}}(A)=2$  and  ${\mbox{\rm rk}}(A^2)=1$.
There exists a unitary matrix
$$U=\left[
\begin{matrix}
 \frac{1}{\sqrt2} &  -\frac{1}{\sqrt3} & \frac{1}{\sqrt6}  \\
   0 & \frac{1}{\sqrt3} & \frac{2}{\sqrt6} \\
   \frac{1}{\sqrt2} &  \frac{1}{\sqrt3} & -\frac{1}{\sqrt6}
\end{matrix}
\right],$$
such that
\begin{align*}
A
=
U
\left[
\begin{matrix}
 1 &  1 & 1  \\
   0 & 0 &1 \\
   0 &  0 & 0
\end{matrix}
\right]
U^{*}.
\end{align*}
By applying (\ref{WWL-MCE-4}),
we have
\begin{align*}
U^{*}GU
=
\left[{\begin{matrix}
 G_{1}
    & G_{2}   \\
 G_{3}   & G_{4}   \\
\end{matrix} }\right]
=
\left[
\begin{matrix}
 0 &  -\frac{2}{\sqrt6} & \frac{1}{\sqrt3}  \\
   -\frac{2}{\sqrt6} & -\frac{1}{3} & -\frac{2}{3\sqrt2} \\
   \frac{1}{\sqrt3} & -\frac{2}{3\sqrt2}  & -\frac{2}{3}
\end{matrix}
\right],
\end{align*}
where $G_1\in\mathbb{C}_{1,1}$.
Since $G_{1}=0$ is singular,
by applying Lemma \ref{WWL-MCE-734} and
Theorem \ref{WWL-MCE-TH3.2},
{{ then $A^{\tiny\textcircled{E}}$ doesn't exist.}}
\end{example}

\begin{example}
Let
$A
=
 \left[{\begin{matrix}
 0 & \frac{4}{3}  & -\frac{1}{3}  \\
   -\frac{1}{3} & 1 & -\frac{1}{3} \\
   -\frac{2}{3} &  -\frac{2}{3} & 0
\end{matrix} }\right]$
with
 ${\mbox{\rm Ind}}(A)=2$ and ${\mbox{\rm rk}}(A^2)=1$.
There exists a unitary matrix
$$U=\left[
\begin{matrix}
 \frac{2}{3} &  \frac{1}{3} & -\frac{2}{3}  \\
   \frac{1}{3} & \frac{2}{3} & \frac{2}{3} \\
   -\frac{2}{3} &  \frac{2}{3} & -\frac{1}{3}
\end{matrix}
\right],$$
such that
\begin{align*}
A
=
U
\left[
\begin{matrix}
 1 &  1 & 1  \\
   0 & 0 & 1\\
   0 &  0 & 0
\end{matrix}
\right]
U^{*},
\end{align*}
$T=1$.
In \cite[Theorem 3.2]{Wang2016laa289}, we know that
\begin{align*}
A^{\tiny\textcircled{\dag}}
=
U
\left[
\begin{matrix}
 T^{-1} &  0  \\
   0 & 0
\end{matrix}
\right]
U^{*}.
\end{align*}
Then
\begin{align*}
A^{\tiny\textcircled{\dag}}
=
U
\left[
\begin{matrix}
 T^{-1} &  0 & 0  \\
   0 & 0 &0 \\
   0 &  0 & 0
\end{matrix}
\right]
U^{*}
=
\left[
\begin{matrix}
 \frac{4}{9} &  \frac{2}{9} & -\frac{4}{9}  \\
   \frac{2}{9} & \frac{1}{9} & -\frac{2}{9}\\
   -\frac{4}{9} &  -\frac{2}{9} & \frac{4}{9}
\end{matrix}
\right].
\end{align*}
By applying (\ref{WWL-MCE-4}), we have
\begin{align*}
U^{*}GU
=
\left[{\begin{matrix}
 G_{1}
    & G_{2}   \\
 G_{3}   & G_{4}   \\
\end{matrix} }\right]
=
\left[
\begin{matrix}
 -\frac{1}{9} &  \frac{4}{9} & -\frac{8}{9}  \\
   \frac{4}{9} & -\frac{7}{9} & -\frac{4}{9} \\
   -\frac{8}{9} &  -\frac{4}{9} & -\frac{1}{9}
\end{matrix}
\right],
\end{align*}
where $G_1\in\mathbb{C}_{1,1}$.
Since $G_{1}=-\frac{1}{9}$ is nonsingular,
by applying
Lemma \ref{WWL-MCE-734}
and
Theorem \ref{WWL-MCE-TH3.2},
then $A$ is {${\mathfrak{m}}$}-core-EP invertible.
Therefore,
\begin{align}
A^{\tiny\textcircled{E}}
\nonumber
&
=
U
\left[
\begin{matrix}
 T^{-1}G_{1}^{-1} &  0 & 0  \\
   0 & 0 & 0\\
   0 &  0 & 0
\end{matrix}
\right]
U^{*}G
=
\left[
\begin{matrix}
 -4 &  2 & -4  \\
   -2 & 1 &-2 \\
   4 &  -2 & 4
\end{matrix}
\right].
\end{align}
From above, we know that
$A^{\tiny\textcircled{\dag}}\neq A^{\tiny\textcircled{E}}$.
\end{example}

\begin{lemma}
[\cite{Ben2003book,Wang2018book}]
Let $A\in\mathbb{C}_{n,n}$
with ${\mbox{\rm Ind}}(A)=k$,
and the Drazin inverse of $A$ is defined as
the unique matrix $X\in\mathbb{C}_{n, n}$ satisfying the equations:
\begin{align}
(1^k)A^{k}XA=A^{k},\
(2)XAX=X,\
(3)AX=XA,
\end{align}
and $X$ is denoted by $A^{D}$.
In particular,
when $A\in\mathbb{C}^{\tiny{\mbox{\rm CM}}}_n$,
$X$ is called the group inverse of $A$,
and we denote $X=A^{\sharp}$.
\end{lemma}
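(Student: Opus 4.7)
The plan is to establish existence and uniqueness of $A^{D}$ by reducing $A$ to a core-nilpotent block-diagonal form. By the Jordan canonical form there exists an invertible $P$ such that
\begin{align*}
A = P
\left[
\begin{matrix}
J_{1} & 0 \\
0 & J_{2}
\end{matrix}
\right]
P^{-1},
\end{align*}
where $J_{1}\in\mathbb{C}_{r,r}$ is invertible with $r = {\mbox{\rm rk}}(A^{k})$, and $J_{2}\in\mathbb{C}_{n-r,n-r}$ is nilpotent with $J_{2}^{k}=0$. Note that this differs from the core-EP decomposition (\ref{WWL-MCE-666}) in a crucial way: the two diagonal blocks here commute with each other, and this commutativity is the structural property that makes equation $(3)$ tractable.

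For existence I would propose the explicit candidate
\begin{align*}
X = P
\left[
\begin{matrix}
J_{1}^{-1} & 0 \\
0 & 0
\end{matrix}
\right]
P^{-1}
\end{align*}
and verify the three defining equations directly: equation $(1^{k})$ reduces to $J_{1}^{k}J_{1}^{-1}J_{1} = J_{1}^{k}$; equation $(2)$ reduces to $J_{1}^{-1}J_{1}J_{1}^{-1}=J_{1}^{-1}$ in the top block and $0=0$ in the bottom; and equation $(3)$ is immediate because $A$ and $X$ are block-diagonal in the same basis. For uniqueness, suppose $X$ satisfies $(1^{k})$, $(2)$, $(3)$. The commutation $AX=XA$, combined with $\sigma(J_{1})\cap\sigma(J_{2})=\emptyset$ (which holds since $J_{1}$ is invertible and $J_{2}$ is nilpotent), forces $X$ to be block-diagonal in the Jordan basis, so $X = P\,\mathrm{diag}(X_{1},X_{2})P^{-1}$ with each $X_{i}$ commuting with $J_{i}$. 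Then $(1^{k})$ yields $J_{1}^{k}X_{1}J_{1}=J_{1}^{k}$, hence $X_{1}=J_{1}^{-1}$. For the nilpotent block, $(2)$ gives $X_{2}J_{2}X_{2}=X_{2}$, and then the commutation telescopes: $X_{2} = X_{2}J_{2}X_{2} = J_{2}X_{2}^{2} = J_{2}^{2}X_{2}^{3} = \cdots = J_{2}^{k}X_{2}^{k+1} = 0$. Thus $X$ equals the candidate, proving uniqueness.

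The step I expect to be the main obstacle is justifying the block-diagonal structure of $X$ from the commutation $AX = XA$, i.e., invoking the fact that the centralizer of $\mathrm{diag}(J_{1},J_{2})$ in $\mathbb{C}_{n,n}$ consists only of block-diagonal matrices when $\sigma(J_{1})\cap\sigma(J_{2})=\emptyset$. This follows from Sylvester's theorem on the equation $J_{1}Y-YJ_{2}=0$ having only the trivial solution, but it should be cited rather than re-derived. Everything else—the explicit formula for $X$ and the telescoping identity killing $X_{2}$—is routine verification. The ``in particular'' statement is then immediate: when $A\in\mathbb{C}^{\tiny{\mbox{\rm CM}}}_{n}$, ${\mbox{\rm Ind}}(A)=1$ forces $J_{2}=0$, so the three Drazin conditions collapse to $AXA=A$, $XAX=X$, $AX=XA$, which is the standard defining system of the group inverse $A^{\sharp}$.
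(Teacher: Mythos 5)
Your proof is correct, but note that the paper does not prove this statement at all: it is quoted verbatim from Ben--Israel and Greville \cite{Ben2003book} as a known definition-plus-uniqueness fact, so any proof you supply is necessarily ``different from the paper's.'' What you have written is the standard textbook argument via the core--nilpotent (Jordan) similarity $A=P\,\mathrm{diag}(J_1,J_2)\,P^{-1}$ with $J_1$ invertible and $J_2$ nilpotent, and every step checks out: the explicit candidate satisfies the three equations, the disjoint-spectra/Sylvester argument correctly forces any solution of $AX=XA$ to be block-diagonal, $(1^k)$ pins down $X_1=J_1^{-1}$, and the telescoping $X_2=J_2^kX_2^{k+1}=0$ kills the nilpotent block. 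Your observation that this commuting block-diagonal form, rather than the unitary upper-block-triangular core-EP form (\ref{WWL-MCE-666}) used throughout the paper, is the right normal form for handling condition $(3)$ is apt: the paper's own expression for $A^D$ in (\ref{WWL-MCE-13}) is obtained in the core-EP basis only indirectly, via the cited identity $A^D=A^k(A^{k+1})^{\sharp}$, precisely because $AX=XA$ is awkward to verify there. The ``in particular'' reduction to the group inverse is also handled correctly (index $1$ forces $J_2=0$, collapsing $(1^k)$ to $AXA=A$). The one step you flag as needing citation --- that the centralizer of $\mathrm{diag}(J_1,J_2)$ is block-diagonal when $\sigma(J_1)\cap\sigma(J_2)=\emptyset$ --- is indeed the only nontrivial external input, and citing Sylvester's theorem for it is appropriate.
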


\begin{lemma}
[\cite{Ben2003book,Wang2018book}]
Let $A\in\mathbb{C}_{n, n}$ with ${\mbox{\rm Ind}}(A)=k$,
then
\begin{align}
\label{WWL-MCE-9}
A^{D}=A^{k}(A^{k+1})^{\sharp}.
\end{align}
\end{lemma}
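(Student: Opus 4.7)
The plan is to exhibit the candidate $X = A^{k}(A^{k+1})^{\sharp}$ directly and verify in turn the three defining equations $(1^k)$, $(2)$, $(3)$ of the Drazin inverse, then invoke uniqueness of $A^D$ to conclude.

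Before anything else, I would check that the group inverse $(A^{k+1})^{\sharp}$ actually exists, i.e., that $A^{k+1}\in\mathbb{C}^{\tiny{\mbox{\rm CM}}}_n$. Since $\mathrm{Ind}(A)=k$, the rank of the powers of $A$ stabilizes at $k$, so in particular
\begin{align*}
\mathrm{rk}\bigl((A^{k+1})^2\bigr)=\mathrm{rk}(A^{2k+2})=\mathrm{rk}(A^{k+1}),
\end{align*}
placing $A^{k+1}$ in $\mathbb{C}^{\tiny{\mbox{\rm CM}}}_n$ by the characterization recalled at the top of the paper. This makes $(A^{k+1})^{\sharp}$ well defined.

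Next I would record two commutation/range facts that do essentially all the work. First, $A$ trivially commutes with $A^{k+1}$, and because the group inverse of a matrix commutes with every matrix commuting with it, $A$ also commutes with $(A^{k+1})^{\sharp}$. Second, rank stabilization gives $\mathcal{R}(A^{k})=\mathcal{R}(A^{k+1})$ (the inclusion $\supseteq$ is automatic, and the dimensions agree); since $A^{k+1}(A^{k+1})^{\sharp}$ is the projector onto $\mathcal{R}(A^{k+1})$ along $\ker(A^{k+1})$, this yields the crucial identity $A^{k+1}(A^{k+1})^{\sharp}A^{k}=A^{k}$.

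With these in hand the three verifications are mechanical. For $(3)$: $AX=A\cdot A^{k}(A^{k+1})^{\sharp}=A^{k+1}(A^{k+1})^{\sharp}$, while $XA=A^{k}(A^{k+1})^{\sharp}A=A^{k}A(A^{k+1})^{\sharp}=A^{k+1}(A^{k+1})^{\sharp}$ by the commutation fact, so $AX=XA$. For $(1^k)$: using the commutation of $A^{k}$ with $A^{k+1}(A^{k+1})^{\sharp}$ and the range identity,
\begin{align*}
A^{k}XA=A^{k}\cdot A^{k+1}(A^{k+1})^{\sharp}=A^{k+1}(A^{k+1})^{\sharp}\,A^{k}=A^{k}.
\end{align*}
For $(2)$: $XAX=A^{k}(A^{k+1})^{\sharp}\,A^{k+1}\,(A^{k+1})^{\sharp}=A^{k}(A^{k+1})^{\sharp}=X$, since $(A^{k+1})^{\sharp}A^{k+1}(A^{k+1})^{\sharp}=(A^{k+1})^{\sharp}$ by the group-inverse axioms. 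Uniqueness of the Drazin inverse then forces $A^{D}=X=A^{k}(A^{k+1})^{\sharp}$.

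The main (though mild) obstacle is the second step: justifying that $A$ commutes with $(A^{k+1})^{\sharp}$ and that $A^{k+1}(A^{k+1})^{\sharp}$ acts as the identity on $\mathcal{R}(A^{k})$. Everything else is a routine substitution once these structural facts are in place; alternatively one could bypass them by writing $A$ in its core-nilpotent form $A=A_1+A_2$ from \eqref{WWL-MCE-5} and computing both sides in the block coordinates of \eqref{WWL-MCE-666}, but the algebraic route above is shorter.
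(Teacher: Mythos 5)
Your verification is correct: the existence of $(A^{k+1})^{\sharp}$ follows from rank stabilization at index $k$, the double-commutant property of the group inverse legitimately gives $A(A^{k+1})^{\sharp}=(A^{k+1})^{\sharp}A$, the identity $A^{k+1}(A^{k+1})^{\sharp}A^{k}=A^{k}$ follows from $\mathcal{R}(A^{k})=\mathcal{R}(A^{k+1})$ together with the fact that $MM^{\sharp}$ is the projector onto $\mathcal{R}(M)$ along the null space of $M$, and the three Drazin axioms plus uniqueness then close the argument. Note, however, that the paper does not prove this statement at all: it is quoted as a known lemma from Ben-Israel and Greville, so there is no in-paper proof to match. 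The closest the paper comes is the computation immediately following the lemma, where $(A^{k+1})^{\sharp}$ and $A^{D}$ are written out in the block coordinates of the core-EP decomposition (\ref{WWL-MCE-12})--(\ref{WWL-MCE-13}); that block-matrix route is exactly the alternative you mention at the end, and it is what the paper actually relies on downstream. Your coordinate-free argument is self-contained and arguably cleaner, at the cost of invoking the (standard but unproved-here) facts that $M^{\sharp}$ commutes with everything commuting with $M$ and that $MM^{\sharp}$ is the projector onto $\mathcal{R}(M)$.
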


Let $A=A_1+A_2$ be of the core-EP decomposition of $A$,
and
$A_1$ and $A_2$ be as in  (\ref{WWL-MCE-666}),
by applying (\ref{WWL-MCE-7})
and
(\ref{WWL-MCE-lemma2.6}),
we have
\begin{align}
\label{WWL-MCE-12}
(A^{k+1})^{\sharp}=
U
\left[
\begin{matrix}
 (T^{k+1})^{-1} & (T^{k+1})^{-2}\overline{T}\\
   0     &  0
\end{matrix}
\right]
U^{*}.
\end{align}
By applying
(\ref{20201109-3}),
(\ref{WWL-MCE-6})
and
(\ref{WWL-MCE-12}),
we can check that
\begin{align}
\label{WWL-MCE-13}
A^{D}
&
=
U
\left[
\begin{matrix}
  T^{-1}  &   T^{-k-2}\overline{T} \\
      0     &       0
\end{matrix}
\right]
U^{*}
\ \mbox{ and } \
\left(A^{k}\right)^{\tiny\textcircled{m}}
=
U
\left[
\begin{matrix}
 \left(T^{k}\right)^{-1}G_{1}^{-1} & 0\\
   0     &  0
\end{matrix}
\right]
U^{*}G,
\end{align}
%
in which $G_{1}$ is given as in (\ref{WWL-MCE-4}).
Furthermore,
by applying (\ref{WWL-MCE-6}) and (\ref{WWL-MCE-13}),
we can obtain
\begin{align}\nonumber
A^{k}A^{D}\left(A^{k}\right)^{\tiny\textcircled{m}}
&=
U
\left[
\begin{matrix}
   T^{k}  &      \widehat{T}  \\
       0   &       0
\end{matrix}
\right]
\left[
\begin{matrix}
   T^{-1} &       T^{-k-2}\overline{T} \\
       0     &       0
\end{matrix}
\right]
\left[
\begin{matrix}
   \left(T^{k}\right)^{-1}G_{1}^{-1} &       0 \\
       0     &       0
\end{matrix}
\right]
U^{*}G
\\
\nonumber
&
=
U
\left[
\begin{matrix}
 T^{-1}G_{1}^{-1}  &  0  \\
       0     &      0
\end{matrix}
\right]
U^{*}G
=
A^{\tiny\textcircled{E}}.
\end{align}
Therefore,
we get one characterization of the {${\mathfrak{m}}$}-core-EP inverse.

\begin{theorem}
Let $A\in\mathbb{C}_{n,n}$
with ${\mbox{\rm Ind}}(A)=k$.
If $A$ is {${\mathfrak{m}}$}-core-EP invertible, then
\begin{align}\label{WWL-MCE-11}
A^{{\tiny{\textcircled{E}}}}
=
A^{k}A^{D}\left(A^{k}\right)^{\tiny\textcircled{m}}.
\end{align}
\end{theorem}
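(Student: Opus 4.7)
The plan is to verify the identity \eqref{WWL-MCE-11} by translating every factor on the right-hand side into the block basis furnished by the core-EP decomposition \eqref{WWL-MCE-5}--\eqref{WWL-MCE-666} of $A$, then carrying out a direct block multiplication. Since Theorem \ref{WWL-MCE-TH3.2} tells us that $\mathfrak{m}$-core-EP invertibility of $A$ is equivalent to nonsingularity of the block $G_1$ in \eqref{WWL-MCE-4}, each of the three factors on the right will have an explicit closed form in this basis.

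First I would assemble the three ingredients. The block shape of $A^k$ is already recorded in \eqref{WWL-MCE-6}. For the Drazin inverse, I would combine the identity \eqref{WWL-MCE-9} with the group-inverse formula \eqref{WWL-MCE-lemma2.6} applied to $A^{k+1}$, whose block form is \eqref{WWL-MCE-7}; this gives \eqref{WWL-MCE-12} and, after left-multiplication by $A^k$, the left expression in \eqref{WWL-MCE-13}. For $(A^k)^{\tiny\textcircled{m}}$, note that $\operatorname{Ind}(A)=k$ forces $A^k\in\mathbb{C}^{\mathrm{CM}}_n$, and Lemma \ref{WWL-MCE-734} tells us that $G_1$ is invertible under the standing hypothesis, so the $\mathfrak{m}$-core inverse formula \eqref{WWL-MCE-3-2} applies to $A^k$ with its $(1,1)$-block $T^k$ and yields the right expression in \eqref{WWL-MCE-13}.

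Second I would multiply the three blocks, taking advantage of the $U^*U=I$ cancellations and the fact that the rightmost factor $(A^k)^{\tiny\textcircled{m}}$ has only a nonzero $(1,1)$-block. The first product $A^k A^D$ simplifies to a block matrix whose top row is $(T^{k-1}\ T^{-2}\overline{T})$ and whose bottom row is zero; multiplying on the right by $(A^k)^{\tiny\textcircled{m}}$ then annihilates the $(1,2)$ entry and collapses the $(1,1)$ entry to $T^{k-1}(T^k)^{-1}G_1^{-1}=T^{-1}G_1^{-1}$. This is precisely the block form of $A^{\tiny\textcircled{E}}$ delivered by \eqref{WWL-MCE-th3.2}, completing the verification.

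The proof is essentially bookkeeping, so no step is truly hard; the only mild pitfall is keeping the cross-terms $\widehat{T}$, $\overline{T}$, and the off-diagonal blocks $G_2,G_3,G_4$ of $U^*GU$ straight through the multiplication. Fortunately, every one of these contributions is multiplied against a zero block in $(A^k)^{\tiny\textcircled{m}}$ before it can enter the final answer, so the verification reduces to the single $1\times 1$ identity $T^{k-1}\cdot (T^k)^{-1}G_1^{-1}=T^{-1}G_1^{-1}$.
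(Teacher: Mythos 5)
Your proposal is correct and follows essentially the same route as the paper: both compute $(A^{k+1})^{\sharp}$ from \eqref{WWL-MCE-7} and \eqref{WWL-MCE-lemma2.6}, obtain $A^{D}$ via \eqref{WWL-MCE-9} and $(A^{k})^{\tiny\textcircled{m}}$ via \eqref{WWL-MCE-3-2} in the common block basis, and then verify by block multiplication that the product collapses to $U\left[\begin{smallmatrix} T^{-1}G_{1}^{-1} & 0\\ 0 & 0\end{smallmatrix}\right]U^{*}G=A^{\tiny\textcircled{E}}$ as in \eqref{WWL-MCE-th3.2}. The intermediate simplification you record for $A^{k}A^{D}$ and the final $1\times1$ reduction $T^{k-1}(T^{k})^{-1}G_{1}^{-1}=T^{-1}G_{1}^{-1}$ both check out.
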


\begin{theorem}
\label{WWL-Theorem-MCE-2}
Let
$A\in\mathbb{C}_{n,n}$ with ${\mbox{\rm Ind}}(A)=k$.
If the  {${\mathfrak{m}}$}-core-EP inverse exists,
then we have
\begin{align}
\label{WWL-Theorem-MCE-EQ-1}
A^{{\tiny{\textcircled{E}}}}
=
A_{1}^{\tiny\textcircled{m}},
\end{align}
where $A_1$ is given as in (\ref{WWL-MCE-5}).
\end{theorem}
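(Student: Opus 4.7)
The plan is to read off $A_1^{\tiny\textcircled{m}}$ from the formula (\ref{WWL-MCE-3-2}) and match it against the explicit form of $A^{\tiny\textcircled{E}}$ given in Theorem \ref{WWL-MCE-TH3.2}. Since $A = A_1 + A_2$ is the core-EP decomposition of $A$, the piece $A_1$ lies in $\mathbb{C}^{\tiny{\mbox{\rm CM}}}_n$ and, by (\ref{WWL-MCE-666}), admits the representation $A_1 = U\bigl[\begin{smallmatrix} T & S \\ 0 & 0 \end{smallmatrix}\bigr]U^{*}$ with the same unitary $U$ that realises the core-EP decomposition of $A$. Thus $A_1$ has a (trivial) core-EP decomposition of its own, with zero nilpotent part, implemented by exactly that $U$.

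The key identification is that the block $\widehat{G}_1$ from (\ref{MC-8}), when applied to $A_1$, equals the block $G_1$ from (\ref{WWL-MCE-4}) associated with $A$. Both are defined as the top-left $r\times r$ entry of $U^{*}GU$ for the same $U$ and the same $r = {\mbox{\rm rk}}(A^k) = {\mbox{\rm rk}}(A_1)$, so they coincide literally, with no computation needed. This is the only subtle point; once it is in hand, the rest is substitution.

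Because $A$ is assumed to be m-core-EP invertible, Theorem \ref{WWL-MCE-TH3.2} combined with Lemma \ref{WWL-MCE-734} yields that $G_1$ is nonsingular. By the identification above, this is precisely the hypothesis needed in (\ref{WWL-MCE-3-2}) for $A_1$ to be m-core invertible, so $A_1^{\tiny\textcircled{m}}$ exists. Applying formula (\ref{WWL-MCE-3-2}) then gives
\begin{align*}
A_1^{\tiny\textcircled{m}} \;=\; U\left[\begin{matrix} T^{-1}G_1^{-1} & 0 \\ 0 & 0 \end{matrix}\right] U^{*} G,
\end{align*}
which is exactly the expression (\ref{WWL-MCE-th3.2}) for $A^{{\tiny{\textcircled{E}}}}$. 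Comparing the two formulas establishes (\ref{WWL-Theorem-MCE-EQ-1}), and the proof is complete.
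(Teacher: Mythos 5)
Your proposal is correct and follows essentially the same route as the paper: compute $A_{1}^{\tiny\textcircled{m}}$ from the representation $A_1=U\bigl[\begin{smallmatrix} T & S \\ 0 & 0\end{smallmatrix}\bigr]U^{*}$ via (\ref{WWL-MCE-3-2}), using that the block $\widehat{G}_1$ of (\ref{MC-8}) coincides with $G_1$ of (\ref{WWL-MCE-4}), and compare with (\ref{WWL-MCE-th3.2}). You actually spell out the existence of $A_1^{\tiny\textcircled{m}}$ and the identification $\widehat{G}_1=G_1$ more explicitly than the paper does, which is a welcome addition rather than a deviation.
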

\begin{proof}
Let $A_1$ be as in (\ref{WWL-MCE-5}),
then applying (\ref{WWL-MCE-666}),
we get
\begin{align}
\nonumber
A_{1}^{\tiny\textcircled{m}}=
U
\left[
\begin{matrix}
 T^{-1}G_{1}^{-1} & 0\\
   0     &  0
\end{matrix}
\right]
U^{*}G,
\end{align}
where $G_{1}$ is given as in (\ref{WWL-MCE-4}).{{ By applying
(\ref{WWL-MCE-th3.2}),}}
we have (\ref{WWL-Theorem-MCE-EQ-1}).
\end{proof}

\begin{theorem}
\label{WWL-Theorem-MCE-3}
Let $A\in\mathbb{C}_{n,n}$
 with ${\mbox{\rm Ind}}(A)=k$
 and
 ${\mbox{\rm rk}}(A^{k})
 ={\mbox{\rm rk}}((A^{k})^{\sim}A^{k})
 ={\mbox{\rm rk}}(A_{1}A_{1}^{\sim})=r$.
Then
\begin{align}
\label{WWL-MCE-4-1}
A^{{\tiny{\textcircled{E}}}}=A_{1}^{\sharp}A_{1}A_{1}^{m}.
\end{align}
\end{theorem}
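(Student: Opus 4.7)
The plan is to expand all three factors of the product $A_1^\sharp A_1 A_1^{\mathfrak{m}}$ in the core-EP block basis of (\ref{WWL-MCE-666}) and then recognize the result as an equivalent form of the closed expression (\ref{WWL-MCE-th3.2}) for $A^{\tiny\textcircled{E}}$. As a preliminary I first verify that the Minkowski inverse $A_1^{\mathfrak{m}}$ actually exists. By (\ref{WWL-MCE-4-20191230-3}) this requires ${\rm rk}(A_1^{\sim}A_1)={\rm rk}(A_1 A_1^{\sim})={\rm rk}(A_1)=r$. Since $T$ is invertible, $\mathcal{R}(A_1)$ coincides with $\mathcal{R}(A^{k})$ (both equal the span of the first $r$ columns of $U$), so ${\rm rk}(A_1)=r$; a direct block calculation with (\ref{WWL-MCE-4}) then shows that the hypothesized rank identities on $(A^{k})^{\sim}A^{k}$ and $A^{k}(A^{k})^{\sim}$ force the analogous identities for $A_1$.

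Next, from (\ref{WWL-MCE-lemma2.6}) a routine multiplication gives
\begin{align*}
A_1^{\sharp} A_1 = U\left[\begin{matrix} I & T^{-1}S \\ 0 & 0 \end{matrix}\right]U^{*}.
\end{align*}
For the remaining factor I observe that the four defining equations of $A_1^{\mathfrak{m}}$ force $P:=A_1 A_1^{\mathfrak{m}}$ to be an idempotent satisfying $\mathcal{R}(P)\subseteq\mathcal{R}(A_1)$, $PA_1=A_1$, and $P^{\sim}=P$. Writing $P=U\left[\begin{matrix} P_{11} & P_{12} \\ P_{21} & P_{22} \end{matrix}\right]U^{*}$, the range inclusion forces $P_{21}=0=P_{22}$, and $PA_1=A_1$ together with the invertibility of $T$ yields $P_{11}=I$. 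The Minkowski-symmetry relation $P^{\sim}=P$, combined with the block identity $(U^{*}GU)^{2}=I$ (in particular $G_{1}^{2}+G_{2}G_{3}=I$) and the relations $G_{1}^{*}=G_{1}$, $G_{3}^{*}=G_{2}$ from Remark \ref{WZHBAP-MT1-Remark}, then pins down $P_{12}=G_{1}^{-1}G_{2}$.

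Combining these,
\begin{align*}
A_1^{\sharp} A_1 A_1^{\mathfrak{m}}
&= U\left[\begin{matrix} T^{-1} & T^{-2}S \\ 0 & 0 \end{matrix}\right]\left[\begin{matrix} I & G_{1}^{-1}G_{2} \\ 0 & 0 \end{matrix}\right]U^{*}
= U\left[\begin{matrix} T^{-1} & T^{-1}G_{1}^{-1}G_{2} \\ 0 & 0 \end{matrix}\right]U^{*}.
\end{align*}
On the other hand, rewriting (\ref{WWL-MCE-th3.2}) via $U^{*}G=(U^{*}GU)U^{*}$ and expanding the first block row yields exactly the same matrix, which closes the argument.

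The main obstacle is the Minkowski-symmetry step that pins down $P_{12}=G_{1}^{-1}G_{2}$: it requires careful bookkeeping with the four blocks of $U^{*}GU$ and repeated use of $(U^{*}GU)^{2}=I$ together with the Hermitian relations of Remark \ref{WZHBAP-MT1-Remark}. Once the block structure of $A_1 A_1^{\mathfrak{m}}$ is available, everything else reduces to routine block multiplications.
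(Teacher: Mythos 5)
Your block-computation route is genuinely different from the paper's. The paper never determines $A_1^{\mathfrak{m}}$ or $A_1A_1^{\mathfrak{m}}$ explicitly: it sets $\mathfrak{X}=A_1^{\sharp}A_1A_1^{\mathfrak{m}}$ and verifies the four equations of Definition \ref{WWL-MCE-df-3.1} one by one, using only the structural identities of the core-EP decomposition ($A_2A_1=0$, $A_2^{k}=0$, $A_1A_1^{\mathfrak{m}}A_1=A_1$, $\left(A_1A_1^{\mathfrak{m}}\right)^{\sim}=A_1A_1^{\mathfrak{m}}$), and then concludes by uniqueness of the ${\mathfrak{m}}$-core-EP inverse. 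Your alternative --- pinning down the projector $P=A_1A_1^{\mathfrak{m}}$ blockwise and matching the product against (\ref{WWL-MCE-th3.2}) --- is sound once $P$ exists: the two block equations $(G_1+G_2P_{12}^{*})G_1=I$ and $(G_1+G_2P_{12}^{*})G_2=P_{12}$ extracted from $P^{\sim}=P$ already force $P_{12}=G_1^{-1}G_2$ (the involution identity $(U^{*}GU)^{2}=I$ is only needed as a consistency check, not for the determination), and $A_1^{\sharp}P=U\left[\begin{smallmatrix} T^{-1} & T^{-1}G_1^{-1}G_2 \\ 0 & 0\end{smallmatrix}\right]U^{*}$ does coincide with the right-hand side of (\ref{WWL-MCE-th3.2}) upon writing $U^{*}G=(U^{*}GU)U^{*}$. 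Your version buys an explicit closed form for each factor where the paper's check is coordinate-free, at the cost of the uniqueness argument for $P$.

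There is, however, a genuine gap in your preliminary step: the claim that ``a direct block calculation'' transfers the hypothesized rank identities to $A_1$. This is exactly the one-line assertion in the paper's own proof, and it is false as stated. The half involving $A_1^{\sim}A_1$ is fine, since ${\mathrm{rk}}\left(A_1^{\sim}A_1\right)={\mathrm{rk}}\left(G_1\right)=r$ follows from Lemma \ref{WWL-MCE-734}. But ${\mathrm{rk}}\left(A_1A_1^{\sim}\right)$ equals the rank of $WJW^{*}$ with $W=\left[\begin{smallmatrix} T & S\end{smallmatrix}\right]$ and $J=U^{*}GU$, whereas the hypothesis controls ${\mathrm{rk}}\left(A^{k}\left(A^{k}\right)^{\sim}\right)$, the rank of $VJV^{*}$ with $V=\left[\begin{smallmatrix} T^{k} & \widehat{T}\end{smallmatrix}\right]$; when $SN\neq 0$ the row spaces of $W$ and $V$ differ, and these two $J$-Gram matrices need not have equal rank. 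Concretely, take $U=I_3$, $G={\rm diag}(1,-1,-1)$, $T=1$, $S=\left[\begin{smallmatrix} \frac{1}{\sqrt2} & \frac{1}{\sqrt2}\end{smallmatrix}\right]$, $N=\left[\begin{smallmatrix} 0 & 1\\ 0 & 0\end{smallmatrix}\right]$, i.e.\ $A=\left[\begin{smallmatrix} 1 & \frac{1}{\sqrt2} & \frac{1}{\sqrt2}\\ 0 & 0 & 1\\ 0 & 0 & 0\end{smallmatrix}\right]$: then ${\mbox{\rm Ind}}(A)=2$ and ${\mathrm{rk}}\left(A^{2}\right)={\mathrm{rk}}\left(\left(A^{2}\right)^{\sim}A^{2}\right)={\mathrm{rk}}\left(A^{2}\left(A^{2}\right)^{\sim}\right)=1$ (with $G_1=1$ invertible), yet $A_1A_1^{\sim}=0$, so $A_1^{\mathfrak{m}}$ does not exist and the right-hand side of (\ref{WWL-MCE-4-1}) is undefined. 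Since the paper commits the identical error, this defect is inherited from the source rather than introduced by your route; both proofs become correct if one additionally assumes ${\mathrm{rk}}\left(A_1A_1^{\sim}\right)={\mathrm{rk}}\left(A_1\right)$ (equivalently, that $A_1^{\mathfrak{m}}$ exists), but your ``direct block calculation'' step, if actually carried out, would fail.
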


\begin{proof}
Let the core-EP decomposition of $A$ be as in (\ref{WWL-MCE-5}),
and
$ A_{1}$ and $A_2$ be as in (\ref{WWL-MCE-666}).
Then
  $A_1$ is group invertible,
and
$(A_{1}+A_{2})^{k}
= A_{1}^{k}+A_{1}^{k-1}A_{2}+A_{1}^{k-2}A_{2}^{2}
+\cdots+
A_{1}A_{2}^{k-1}+A_{2}^{k}$,
in which $k={\mbox{\rm Ind}}(A)$.

Applying
 ${\mbox{\rm rk}}\left(A^{k}\right)
 ={\mbox{\rm rk}}\left(\left(A^{k}\right)^{\sim}A^{k}\right)
 ={\mbox{\rm rk}}\left(A_{1}A_{1}^{\sim}\right)=r$,
 we get
  $
{\mbox{\rm rk}}\left(A_1\right)
=
{\mbox{\rm rk}}\left( A_1  A_1^{\sim}\right)
=
{\mbox{\rm rk}}\left( A_1 ^{\sim} A_1\right)$.
It follows from  (\ref{WWL-MCE-4-20191230-3}) that
$ A_{1}$ is Minkowski invertible.

Write
\begin{align*}
\mathfrak{X}
=
A_{1}^{\sharp}A_{1}A_{1}^{{\mathfrak{m}}}.
\end{align*}
Applying
$A_{2}A_{1}=0$,
$A_{1}^{\sharp}A_{1}=A_{1}A_1^{\sharp}$,
$A_{2}^{k}=0$,
$A_{1}^{\sharp}A_{1}A_{1}=A_1$
and
$A_{1}A_{1}^{m} A_{1}= A_{1}$,
we have
\begin{align}
\nonumber
\mathfrak{X}A^{k+1}
&=
A_{1}^{\sharp}A_{1}A_{1}^{m}(A_{1}+A_{2})^{k+1}
\\
\nonumber
&
=
A_{1}^{\sharp}A_{1}A_{1}^{m}(A_{1}^{k+1}+A_{1}^{k}A_{2}
+A_{1}^{k-1}A_{2}^{2}+\cdots+A_{2}^{k+1})
\\
\nonumber
&
=
A_{1}^{\sharp}A_{1}A_{1}^{m}(A_{1}^{k+1}+A_{1}^{k}A_{2}
+A_{1}^{k-1}A_{2}^{2}+\cdots+A_{1}^{2}A_{2}^{k-1})
\\
\nonumber
&
=
A_{1}^{k}+A_{1}^{k-1}A_{2}+A_{1}^{k-2}A_{2}^{2}
+\cdots+A_{1}A_{2}^{k-1}+A_{2}^{k}
\\
\label{WWL-MCE-001}
&
=
(A_{1}+A_{2})^{k}=A^{k},
\\
\nonumber
(A\mathfrak{X})^{\sim}
&=
((A_{1}+A_{2})A_{1}^{\sharp}A_{1}A_{1}^{m})^{\sim}
=
(A_{1}A_{1}^{\sharp}A_{1}A_{1}^{m}+A_{2}A_{1}^{\sharp}A_{1}A_{1}^{m})^{\sim}
\\
\nonumber
&
=
(A_{1}A_{1}^{\sharp}A_{1}A_{1}^{m}+A_{2}A_{1}A_{1}^{\sharp}A_{1}^{m})^{\sim}
=
(A_{1}A_{1}^{m})^{\sim}=A_{1}A_{1}^{m}
\\
\label{WWL-MCE-002}
&
=
A_{1}A_{1}^{\sharp}A_{1}A_{1}^{m}=(A_{1}
+A_{2})A_{1}^{\sharp}A_{1}A_{1}^{m}
=
A\mathfrak{X}
\end{align}
and
\begin{align}
\nonumber
\mathfrak{X}A\mathfrak{X}
&=
A_{1}^{\sharp}A_{1}A_{1}^{m}(A_{1}
+A_{2})A_{1}^{\sharp}A_{1}A_{1}^{m}
=
(A_{1}^{\sharp}A_{1}A_{1}^{m}A_{1}
+A_{1}^{\sharp}A_{1}A_{1}^{m}A_{2})A_{1}^{\sharp}A_{1}A_{1}^{m}
\\
\label{WWL-MCE-003}
&
=
A_{1}^{\sharp}A_{1}A_{1}^{\sharp}A_{1}A_{1}^{m}
+A_{1}^{\sharp}A_{1}A_{1}^{m}A_{2}A_{1}A_{1}^{\sharp}A_{1}^{m}
=
\mathfrak{X}.
\end{align}
By (\ref{WWL-MCE-5}),
(\ref{WWL-MCE-666})
and (\ref{WWL-MCE-6}),
we have
\begin{align}
\label{WWL-MCE-16}
A_{1}^{\sharp}=
U
\left[
\begin{matrix}
 T^{-1} &  T^{-2}S\\
   0     &  0
\end{matrix}
\right]
U^{*}.
\end{align}
It is easy to check that
\begin{align}\label{WWL-MCE-004}
\mathcal{R}\left(\mathfrak{X}\right)
\subseteq \mathcal{R}\left(A^{k}\right).
\end{align}
Therefore,
by applying
(\ref{WWL-MCE-001}),
(\ref{WWL-MCE-002}),
(\ref{WWL-MCE-003}),
(\ref{WWL-MCE-004})
and
Definition \ref{WWL-MCE-df-3.1},
we get (\ref{WWL-MCE-4-1}).
\end{proof}

\section{The ${\mathfrak{m}}$-core-EP Decomposition}

In this section,
we introduce a decomposition (called ${\mathfrak{m}}$-core-EP decomposition)
in Minkowski space,
prove that the decomposition is unique,
derive several characterizations of it,
and
apply it to study the ${\mathfrak{m}}$-core-EP inverse.

Let $A \in\mathbb{C}_{n,n}$
with ${\mbox{\rm Ind}}(A)=k$
be as in (\ref{WWL-MCE-5}),
 and
 ${\mbox{\rm rk}}\left(A^{k}\right)
 ={\mbox{\rm rk}}\left(\left(A^{k}\right)^{\sim}A^{k}\right)$.
Write
 \begin{align}
\label{WWL-MCE-224}
\widehat{A_{1}}=
U
\left[
\begin{matrix}
 T&  S+G_{1}^{-1}G_{2}N\\
   0     &  0
\end{matrix}
\right]
U^{*}
\ \mbox{ and } \
\widehat{A_{2}}=
U
\left[
\begin{matrix}
 0 &  -G_{1}^{-1}G_{2}N\\
   0     &  N
\end{matrix}
\right]
U^{*},
\end{align}
in which $U$, $T$, $N$ and $S$ are as given in (\ref{WWL-MCE-666}),
and $G_1$ and $G_2$ are as given in (\ref{WWL-MCE-4}).
By applying (\ref{WWL-MCE-2}) and (\ref{WWL-MCE-224}), we obtain
\begin{align}\nonumber
\widehat{A_{1}}^{\sim}\widehat{A_{1}}
&=
G\widehat{A_{1}}^{*}G\widehat{A_{1}}
\\
\nonumber
&
=
GU
\left[
\begin{matrix}
   T^{*}  &      0  \\
       S^{*}+N^{*}G_{2}^{*}(G_{1}^{-1})^{*}   &       0
\end{matrix}
\right]
U^{*}GU
\left[
\begin{matrix}
   T &       S+G_{1}^{-1}G_{2}N \\
       0     &       0
\end{matrix}
\right]
U^{*}
\\
\nonumber
&
=
GU
\left[
\begin{matrix}
   T^{*}  &      0  \\
       S^{*}+N^{*}G_{2}^{*}(G_{1}^{-1})^{*}   &       0
\end{matrix}
\right]
\left[
\begin{matrix}
   G_{1} &       G_{2} \\
       G_{3}     &       G_{4}
\end{matrix}
\right]
\left[
\begin{matrix}
   T &       S+G_{1}^{-1}G_{2}N \\
       0     &       0
\end{matrix}
\right]
U^{*}
\\
\nonumber
&
=
GU
\left[
\begin{matrix}
   T^{*}G_{1}T  &      T^{*}G_{1}S+T^{*}G_{2}N  \\
       S^{*}G_{1}T+N^{*}G_{2}^{*}T   &       S^{*}G_{1}S+S^{*}G_{2}N+N^{*}G_{2}^{*}S+N^{*}G_{2}^{*}G_{1}^{-1}G_{2}N
\end{matrix}
\right]
U^{*}.
\end{align}
Then, ${\mbox{\rm rk}}\left(\widehat{A_{1}}\right)
=
 {\mbox{\rm rk}}\left(T^{*}G_{1}T \right)
\leq
{\mbox{\rm rk}}\left(\widehat{A_{1}}^{\sim}\widehat{A_{1}}\right)
\leq
{\mbox{\rm rk}}\left(\widehat{A_{1}}\right)$,
that is,
\begin{align}
\label{WWL-MCE-123}
{\mbox{\rm rk}}\left(\widehat{A_{1}}^{\sim}\widehat{A_{1}}\right)
=
{\mbox{\rm rk}}\left(\widehat{A_{1}}\right).
\end{align}
Applying $N^k=0$,
we have
\begin{align}
\label{WWL-MCE-124}
\widehat{A_{2}}^{k}
&=
U
\left[
\begin{matrix}
   0 &     -G_{1}^{-1}G_{2}N    \\
       0 &       N
\end{matrix}
\right]^{k}
U^\ast
=
U
\left[
\begin{matrix}
  0  & -G_{1}^{-1}G_{2}N^{k}   \\
       0     &       N^{k}
\end{matrix}
\right]
U^\ast
=
0,
\\
\nonumber
\widehat{A_{1}}^{\sim}\widehat{A_{2}}
&=
G\widehat{A_{1}}^{*}G\widehat{A_{2}}
\\
\nonumber
&
=
GU
\left[
\begin{matrix}
   T^{*} &     0    \\
       S^{*}+N^{*}G_{2}^{*}(G_{1}^{-1})^{*} &       0
\end{matrix}
\right]
\left[
\begin{matrix}
   G_{1} &     G_{2}    \\
      G_{3}  &    G_{4}
\end{matrix}
\right]
\left[
\begin{matrix}
   0 &     -G_{1}^{-1}G_{2}N    \\
       0 &       N
\end{matrix}
\right]
U^\ast
\\
\label{WWL-MCE-125}
&
=
GU
\left[
\begin{matrix}
   0 &     0    \\
       0 &       0
\end{matrix}
\right]
U^\ast=0
\end{align}
and
\begin{align}
\label{WWL-MCE-126}
\widehat{A_{2}}\widehat{A_{1}}
&=
U
\left[
\begin{matrix}
   0 &     -G_{1}^{-1}G_{2}N    \\
       0 &       N
\end{matrix}
\right]
\left[
\begin{matrix}
   T &     S+G_{1}^{-1}G_{2}N    \\
       0 &       0
\end{matrix}
\right]
U^{*}=0.
\end{align}

By applying (\ref{WWL-MCE-123}),
(\ref{WWL-MCE-124}),
(\ref{WWL-MCE-125})
and (\ref{WWL-MCE-126}),
we get the following Theorem \ref{WWL-MCE-df4.1}.

\begin{theorem}
\label{WWL-MCE-df4.1}
{ \rm(The ${\mathfrak{m}}$-core-EP Decomposition)}.
Let $A \in\mathbb{C}_{n,n}$
with ${\mbox{\rm Ind}}(A)=k$
 and
 ${\mbox{\rm rk}}\left(A^{k}\right)
 ={\mbox{\rm rk}}\left(\left(A^{k}\right)^{\sim}A^{k}\right)
 =r$.
 Then $A$ can be written as
 the sum of matrices $\widehat{A_{1}}$
 and $\widehat{A_{2}}$,
 i.e.,
 $A=\widehat{A_{1}}+\widehat{A_{2}}$,
 where
\begin{enumerate}
  \item[{\footnotesize\rm(i)}]
$\widehat{A_{1}}\in\mathbb{C}_{n}^{{\tiny{\mbox{\rm CM}}}}$
 with
 ${\mbox{\rm rk}}\left(\widehat{A_{1}} \right)
 ={\mbox{\rm rk}}\left(\widehat{A_{1}}^{\sim}\widehat{A_{1}} \right)$;

\item[{\footnotesize\rm(ii)}]
$\widehat{A_{2}}^{k}=0$;

\item[{\footnotesize\rm(iii)}]
$\widehat{A_{1}}^{\sim}\widehat{A_{2}}=\widehat{A_{2}}\widehat{A_{1}}=0$.
\end{enumerate}
Furthermore,
$\widehat{A_1}$ and $\widehat{A_2}$ have the form (\ref{WWL-MCE-224}).
Here one or both of $\widehat{A_{1}}$ and $\widehat{A_{2}}$ can be null.
\end{theorem}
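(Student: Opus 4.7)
The plan is to cite the sequence of block computations already performed in (\ref{WWL-MCE-123})--(\ref{WWL-MCE-126}) and wrap them into the four assertions of the theorem. No new idea is needed beyond those calculations, together with a direct verification that $A=\widehat{A_1}+\widehat{A_2}$.

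First I would verify the sum identity. Adding the two block matrices in (\ref{WWL-MCE-224}) causes the entries $\pm G_1^{-1}G_2N$ in the $(1,2)$-block to cancel, leaving
\[
\widehat{A_1}+\widehat{A_2}
=U\left[\begin{matrix} T & S \\ 0 & N\end{matrix}\right]U^{*},
\]
which is exactly $A$ via the core-EP decomposition (\ref{WWL-MCE-5}) and (\ref{WWL-MCE-666}). This establishes the decomposition before I address the three listed properties.

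Next I would handle the properties in order. For (i), the block form of $\widehat{A_1}$ with invertible leading block $T\in\mathbb{C}_{r,r}$ makes it immediate that ${\rm rk}(\widehat{A_1})=r$ and that $\widehat{A_1}^{2}$ also has rank $r$, so $\widehat{A_1}\in\mathbb{C}^{\tiny\mbox{\rm CM}}_{n}$; the Minkowski rank equality ${\rm rk}(\widehat{A_1}^{\sim}\widehat{A_1})={\rm rk}(\widehat{A_1})$ is precisely (\ref{WWL-MCE-123}). Property (ii) is (\ref{WWL-MCE-124}), since $N^k=0$ inside the block-triangular shape of $\widehat{A_2}$ forces $\widehat{A_2}^{k}=0$. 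Property (iii) is the combination of (\ref{WWL-MCE-125}) and (\ref{WWL-MCE-126}).

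The only genuinely delicate step, and hence the main obstacle, is (\ref{WWL-MCE-125}). The specific choice of the $(1,2)$-block $-G_1^{-1}G_2N$ in $\widehat{A_2}$ is what forces the Minkowski product $\widehat{A_1}^{\sim}\widehat{A_2}=G\widehat{A_1}^{*}G\widehat{A_2}$ to vanish: after inserting $U^{*}GU$ between $\widehat{A_1}^{*}$ and $\widehat{A_2}$ and multiplying out, the $(1,2)$-block of the inner product reduces to $G_1\cdot(-G_1^{-1}G_2N)+G_2N=0$, and every other block is zero by triangularity. This cancellation is precisely the algebraic reason for the particular splitting in (\ref{WWL-MCE-224}), and it has already been checked in the computation preceding the theorem, so the proof reduces to a brief enumeration that references equations (\ref{WWL-MCE-123})--(\ref{WWL-MCE-126}).
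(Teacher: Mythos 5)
Your proposal is correct and follows essentially the same route as the paper: the paper's ``proof'' is precisely the block computations (\ref{WWL-MCE-123})--(\ref{WWL-MCE-126}) carried out immediately before the theorem statement, and your enumeration matches them item by item, including the key cancellation $G_1(-G_1^{-1}G_2N)+G_2N=0$ that makes $\widehat{A_1}^{\sim}\widehat{A_2}=0$. The only addition is your explicit check that the $\pm G_1^{-1}G_2N$ terms cancel so that $\widehat{A_1}+\widehat{A_2}=A$, which the paper leaves implicit but which is a worthwhile sentence to include.
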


\begin{theorem}
\label{WWL-MCE-th4.2}
Let $A \in\mathbb{C}_{n,n}$ with ${\mbox{\rm Ind}}(A)=k$,
and
let the ${\mathfrak{m}}$-core-EP decomposition of $A$
be as in Theorem \ref{WWL-MCE-df4.1}.
Then
\begin{align}
\label{WWL-MCE-th4.2.1-2}
A^{\tiny\textcircled{E}}
=
\widehat{A_{1}}^{\tiny\textcircled{m}}.
\end{align}
\end{theorem}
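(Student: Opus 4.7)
The plan is to prove the identity $A^{\tiny\textcircled{E}} = \widehat{A_1}^{\tiny\textcircled{m}}$ by comparing explicit matrix representations of both sides relative to the same unitary $U$ given in (\ref{WWL-MCE-666}).

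First, I would observe that the right-hand side $A^{\tiny\textcircled{E}}$ already has a closed form: Theorem \ref{WWL-MCE-TH3.2} tells us that the hypothesis ${\mbox{\rm rk}}(A^k) = {\mbox{\rm rk}}((A^k)^\sim A^k)$ (which is implicit in the existence of the ${\mathfrak{m}}$-core-EP decomposition, per Theorem \ref{WWL-MCE-df4.1}) forces $G_1$ to be invertible and gives the explicit formula (\ref{WWL-MCE-th3.2}), namely $A^{\tiny\textcircled{E}} = U \left[\begin{matrix} T^{-1}G_1^{-1} & 0 \\ 0 & 0 \end{matrix}\right] U^* G$. So the whole task reduces to showing that $\widehat{A_1}^{\tiny\textcircled{m}}$ is given by the very same expression.

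Next, I would verify that $\widehat{A_1}$ is ${\mathfrak{m}}$-core invertible so that $\widehat{A_1}^{\tiny\textcircled{m}}$ makes sense. This is immediate from Theorem \ref{WWL-MCE-df4.1}(i): $\widehat{A_1} \in \mathbb{C}_n^{{\tiny{\mbox{\rm CM}}}}$ and ${\mbox{\rm rk}}(\widehat{A_1}^\sim \widehat{A_1}) = {\mbox{\rm rk}}(\widehat{A_1})$, which is exactly criterion (\ref{MC-Lemma-1.1}). So $\widehat{A_1}^{\tiny\textcircled{m}}$ exists and is unique.

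The computational heart of the proof is then to apply the formula (\ref{WWL-MCE-3-2}) to $\widehat{A_1}$. From the expression of $\widehat{A_1}$ in (\ref{WWL-MCE-224}), it has precisely the core shape $U\left[\begin{matrix} T & S+G_1^{-1}G_2 N \\ 0 & 0 \end{matrix}\right] U^*$ with the same unitary $U$ and same invertible block $T$ as appear in the core-EP decomposition of $A$. Consequently, when one partitions $U^* G U$ for the purposes of formula (\ref{WWL-MCE-3-2}), the top-left block $\widehat{G}_1$ is exactly the $G_1$ of (\ref{WWL-MCE-4}). Since (\ref{WWL-MCE-3-2}) depends only on $T$ and $\widehat{G}_1$ (and not on the upper-right block of the core-EP form), it yields $\widehat{A_1}^{\tiny\textcircled{m}} = U \left[\begin{matrix} T^{-1}G_1^{-1} & 0 \\ 0 & 0 \end{matrix}\right] U^* G$, which matches (\ref{WWL-MCE-th3.2}) and proves (\ref{WWL-MCE-th4.2.1-2}).

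The main obstacle, in the sense of the only step requiring care rather than pure book-keeping, is justifying that we may reuse the same $U$ and the same $G_1$ when invoking formula (\ref{WWL-MCE-3-2}) for $\widehat{A_1}$. If this is felt to need a self-contained argument, one can avoid appealing to (\ref{WWL-MCE-3-2}) altogether and instead verify the three defining identities (\ref{WWL-MCE-2}) of the ${\mathfrak{m}}$-core inverse directly: write $X = U \left[\begin{matrix} T^{-1}G_1^{-1} & 0 \\ 0 & 0 \end{matrix}\right] U^* G$, then check $\widehat{A_1} X \widehat{A_1} = \widehat{A_1}$, $\widehat{A_1} X^2 = X$, and $(\widehat{A_1} X)^\sim = \widehat{A_1} X$ using the block computations already carried out in the proof of Theorem \ref{WWL-MCE-TH3.2} together with Remark \ref{WZHBAP-MT1-Remark} for the Minkowski-adjoint step; uniqueness of the ${\mathfrak{m}}$-core inverse then closes the argument.
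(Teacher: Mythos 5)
Your proof is correct, but it takes a genuinely different route from the paper's. The paper does not compare closed forms at all: it verifies directly that $X=\widehat{A_{1}}^{\tiny\textcircled{m}}$ satisfies the four defining equations of Definition \ref{WWL-MCE-df-3.1} --- $XAX=X$ and $(AX)^{\sim}=AX$ via $\widehat{A_{2}}\widehat{A_{1}}^{\tiny\textcircled{m}}=\widehat{A_{2}}\widehat{A_{1}}\left(\widehat{A_{1}}^{\tiny\textcircled{m}}\right)^{2}=0$; $XA^{k+1}=A^{k}$ by expanding $\left(\widehat{A_{1}}+\widehat{A_{2}}\right)^{k+1}$ using $\widehat{A_{2}}\widehat{A_{1}}=0$ and $\widehat{A_{2}}^{k}=0$; and $\mathcal{R}(X)\subseteq\mathcal{R}\left(A^{k}\right)$ from $A^{k}\left(\widehat{A_{1}}^{\tiny\textcircled{m}}\right)^{k+1}=\widehat{A_{1}}^{\tiny\textcircled{m}}$ --- and then concludes by uniqueness of the ${\mathfrak{m}}$-core-EP inverse. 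Your block-matrix comparison is shorter given Theorem \ref{WWL-MCE-TH3.2} and formula (\ref{WWL-MCE-3-2}), and it mirrors exactly how the paper proves the companion identity $A^{\tiny\textcircled{E}}=A_{1}^{\tiny\textcircled{m}}$ in Theorem \ref{WWL-Theorem-MCE-2}; your justification for reusing the same $U$ and $G_{1}$ is sound, since the Appendix-1 verification of (\ref{WWL-MCE-3-2}) never uses the upper-right block of the core form. The one thing the paper's axiomatic argument buys that yours does not: it applies to \emph{any} splitting $A=\widehat{A_{1}}+\widehat{A_{2}}$ satisfying conditions (i)--(iii) of Theorem \ref{WWL-MCE-df4.1}, not only to the one of the explicit form (\ref{WWL-MCE-224}), and that extra generality is precisely what the paper needs afterwards to prove uniqueness of the ${\mathfrak{m}}$-core-EP decomposition (it applies the present theorem to a hypothetical second decomposition $\widehat{B_{1}}+\widehat{B_{2}}$). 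If your proof is meant to support that downstream use, you should carry out your proposed fallback --- direct verification of the three equations (\ref{WWL-MCE-2}) together with the four equations of Definition \ref{WWL-MCE-df-3.1} --- starting from properties (i)--(iii) alone rather than from the block form (\ref{WWL-MCE-224}).
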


\begin{proof}
Let
$\widehat{A_{1}}$
and
$\widehat{A_{2}}$
be as in Theorem \ref{WWL-MCE-df4.1}.
Applying (\ref{WWL-MCE-2}),
we have
\begin{align}
\label{WWL-MCE-22}
\widehat{A_{2}}\widehat{A_{1}}^{\tiny\textcircled{m}}
=
\widehat{A_{2}}\widehat{A_{1}}\left(\widehat{A_{1}}^{\tiny\textcircled{m}}\right)^{2}
=0 .
\end{align}
Therefore
\begin{align}
\widehat{A_{1}}^{\tiny\textcircled{m}}A\widehat{A_{1}}^{\tiny\textcircled{m}}
&=
\widehat{A_{1}}^{\tiny\textcircled{m}}\widehat{A_{1}}
\widehat{A_{1}}^{\tiny\textcircled{m}}+\widehat{A_{1}}^{\tiny\textcircled{m}}
\widehat{A_{ 2}}\widehat{A_{1}}^{\tiny\textcircled{m}}
\label{WWL-MCE-th4.2.1}
=
\widehat{A_{1}}^{\tiny\textcircled{m}}
\end{align}
and
\begin{align}
\label{WWL-MCE-th4.2.1-1}
\left(A\widehat{A_{1}}^{\tiny\textcircled{m}}\right)^{\sim}
=
\left(\widehat{A_{1}}\widehat{A_{1}}^{\tiny\textcircled{m}}\right)^{\sim}
=
A\widehat{A_{1}}^{\tiny\textcircled{m}}.
\end{align}
{{ From}}
${\mbox{\rm Ind}}(A)=k$,
$\widehat{A_{2}}^{k}=0$
and
$\widehat{A_{1}}^{\tiny\textcircled{m}}\widehat{A_{1}}
=
\widehat{A_{1}}^{\#}\widehat{A_{1}}$,
we have
\begin{align}\label{WWL-MCE-th4.2.2}
\nonumber
\widehat{A_{1}}^{\tiny\textcircled{m}}A^{k+1}
&
=
\widehat{A_{1}}^{\tiny\textcircled{m}}
\left(\widehat{A_{1}}^{k+1}
+
\widehat{A_{1}}^{k}\widehat{A_{2}}+\cdots+\widehat{A_{1}}\widehat{A_{2}}^{k}
+\widehat{A_{2}}^{k+1}\right)
\\
\nonumber
&
=
\widehat{A_{1}}^{\sharp}\widehat{A_{1}}\widehat{A_{1}}
\left(\widehat{A_{1}}^{k-1}
+\widehat{A_{1}}^{k-2}\widehat{A_{2}}
+\cdots+\widehat{A_{2}}^{k-1}\right)
\\
\nonumber
&
=
\widehat{A_{1}}^{k}+\widehat{A_{1}}^{k-1}\widehat{A_{2}}
+\cdots+\widehat{A_{1}}\widehat{A_{2}}^{k-1}
+\widehat{A_{2}}^{k}
\\
&
=
\left(\widehat{A_{1}}+\widehat{A_{2}}\right)^{k}=A^{k}.
\end{align}

Since
$\widehat{A_{1}}\left(\widehat{A_{1}}^{\tiny\textcircled{m}}\right)^{2}
=
\widehat{A_{1}}^{\tiny\textcircled{m}}$,
then
$
A\left(\widehat{A_{1}}^{\tiny\textcircled{m}}\right)^{2}
=
\left(\widehat{A_{1}}+\widehat{A_{2}}\right)
\left(\widehat{A_{1}}^{\tiny\textcircled{m}}\right)^{2}
=\widehat{A_{1}}^{\tiny\textcircled{m}}$, and
\begin{align}\nonumber
A^{k}\left(\widehat{A_{1}}^{\tiny\textcircled{m}}\right)^{k+1}
&
=
A^{k-1}A\left(\widehat{A_{1}}^{\tiny\textcircled{m}}\right)^{2}
\left(\widehat{A_{1}}^{\tiny\textcircled{m}}\right)^{k-1}
=
A^{k-1}\widehat{A_{1}}^{\tiny\textcircled{m}}
\left(\widehat{A_{1}}^{\tiny\textcircled{m}}\right)^{k-1}
\\
\nonumber
&
=
A^{k-2}A\left(\widehat{A_{1}}^{\tiny\textcircled{m}}\right)^{2}
\left(\widehat{A_{1}}^{\tiny\textcircled{m}}\right)^{k-2}
=
A^{k-2}\widehat{A_{1}}^{\tiny\textcircled{m}}
\left(\widehat{A_{1}}^{\tiny\textcircled{m}}\right)^{k-2}
=\cdots
\\
\nonumber
&
=
AA\left(\widehat{A_{1}}^{\tiny\textcircled{m}}\right)^{2}
\widehat{A_{1}}^{\tiny\textcircled{m}}
=
A\widehat{A_{1}}^{\tiny\textcircled{m}}\widehat{A_{1}}^{\tiny\textcircled{m}}
=\widehat{A_{1}}^{\tiny\textcircled{m}}.
\end{align}
Therefore,
\begin{align}
\label{WWL-MCE-th4.2.3}
\mathcal{R}\left(\widehat{A_{1}}^{\tiny\textcircled{m}}\right)
\subseteq
\mathcal{R}\left(A^{k}\right).
\end{align}

Therefore, applying
(\ref{WWL-MCE-th4.2.1}),
(\ref{WWL-MCE-th4.2.1-1}),
(\ref{WWL-MCE-th4.2.2})
and
(\ref{WWL-MCE-th4.2.3}),
we get (\ref{WWL-MCE-th4.2.1-2}).
\end{proof}

\begin{remark}
Let
$A\in\mathbb{C}_{n,n}$ with ${\mbox{\rm Ind}}(A)=k$
 and
 ${\mbox{\rm rk}}\left(A^{k}\right)
 ={\mbox{\rm rk}}\left(\left(A^{k}\right)^{\sim}A^{k}\right)$.
And let $A_1$ and $\widehat{A_{1}}$ be as given in
 Theorem \ref{WWL-Theorem-MCE-2}
 and
 Theorem \ref{WWL-MCE-df4.1}, respectively.
 It  is interesting to see that
 $ A^{\tiny\textcircled{E}}
=
\widehat{A_{1}}^{\tiny\textcircled{m}}
=
 A_{1}^{\tiny\textcircled{m}}$.
 \end{remark}

It can be observed from {{ Example \ref{WWL-MCE-example-3.1}
where $G_{1}=0$}}
is singular that,
after applying Lemma \ref{WWL-MCE-734}
and Theorem \ref{WWL-MCE-TH3.2},
a matrix has a core-EP decomposition,
but it not necessary has {\rm${\mathfrak{m}}$-core-EP} inverse
or {\rm${\mathfrak{m}}$-core-EP decomposition}.
The matrix has a {\rm${\mathfrak{m}}$-core-EP} decomposition,
if and only if the {\rm${\mathfrak{m}}$-core-EP} inverse exists.
Therefore,
{\rm${\mathfrak{m}}$-core-EP decomposition}
is different from core-EP decomposition.
\begin{example}
Let
$A
=
 \left[{\begin{matrix}
 \frac{16+4\sqrt5}{15} &  \frac{2+8\sqrt5}{15} & \frac{10-8\sqrt5}{15}  \\
   \frac{-8+3\sqrt5}{15} & \frac{-1+6\sqrt5}{15} & \frac{-5-6\sqrt5}{15} \\
   \frac{\sqrt5}{3} &  \frac{2\sqrt5}{3} & -\frac{2\sqrt5}{3}
\end{matrix} }\right]$ with ${\rm Ind}(A)=2$
 and  ${\mbox{\rm rk}}(A^2)=1$.
There exists a unitary matrix
$$U
=
\left[
\begin{matrix}
 \frac{2}{\sqrt5} &  \frac{2}{3\sqrt5} & \frac{1}{3}  \\
   -\frac{1}{\sqrt5} & \frac{4}{3\sqrt5} & \frac{2}{3} \\
   0 &  \frac{5}{3\sqrt5} & -\frac{2}{3}
\end{matrix}
\right],$$
such that
\begin{align*}
A
=
U
\left[
\begin{matrix}
 1 &  1 & 1  \\
   0 & 0 &3 \\
   0 &  0 & 0
\end{matrix}
\right]
U^{*}.
\end{align*}
By applying (\ref{WWL-MCE-4}), we have
\begin{align*}
U^{*}GU
=
\left[{\begin{matrix}
 G_{1}
    & G_{2}   \\
 G_{3}   & G_{4}   \\
\end{matrix} }\right]
=
\left[
\begin{matrix}
 \frac{3}{5} &  \frac{8}{15} & \frac{4}{3\sqrt5}  \\
   \frac{8}{15} & -\frac{37}{45} & \frac{4}{9\sqrt5} \\
   \frac{4}{3\sqrt5} &  \frac{4}{9\sqrt5} & -\frac{7}{9}
\end{matrix}
\right],
\end{align*}
where $G_1\in\mathbb{C}_{1,1}$.
Since $\frac{3}{5}$ is nonsingular,
by applying Lemma \ref{WWL-MCE-734}
and Theorem \ref{WWL-MCE-TH3.2},
we observe that $A$ is {${\mathfrak{m}}$}-core-EP invertible.
Then
{\small
\begin{align*}
A^{\tiny\textcircled{E}}
&
=
\left[
\begin{matrix}
 \frac{2}{\sqrt5} &  \frac{2}{3\sqrt5} & \frac{1}{3}  \\
   -\frac{1}{\sqrt5} & \frac{4}{3\sqrt5} & \frac{2}{3} \\
   0 &  \frac{5}{3\sqrt5} & -\frac{2}{3}
\end{matrix}
\right]
\left[
\begin{matrix}
 \frac{5}{3} &  0 & 0  \\
   0 & 0 &0 \\
   0 &  0 & 0
\end{matrix}
\right]
\left[
\begin{matrix}
 \frac{2}{\sqrt5} &  -\frac{1}{\sqrt5} & 0  \\
   \frac{2}{3\sqrt5} & \frac{4}{3\sqrt5} & \frac{5}{3\sqrt5} \\
   \frac{1}{3} &  \frac{2}{3} & -\frac{2}{3}
\end{matrix}
\right]
\left[
\begin{matrix}
 1 &  0 & 0  \\
   0 & -1 &0 \\
   0 &  0 & -1
\end{matrix}
\right]
=
\left[
\begin{matrix}
 \frac{4}{3} &  \frac{2}{3} &0  \\
   -\frac{2}{3} & -\frac{1}{3} & 0 \\
   0 &  0 & 0
\end{matrix}
\right],
\\
A_{1}
&
=
\left[
\begin{matrix}
 \frac{2}{\sqrt5} &  \frac{2}{3\sqrt5} & \frac{1}{3}  \\
   -\frac{1}{\sqrt5} & \frac{4}{3\sqrt5} & \frac{2}{3} \\
   0 &  \frac{5}{3\sqrt5} & -\frac{2}{3}
\end{matrix}
\right]
\left[
\begin{matrix}
 1 &  1 & 1  \\
   0 & 0 &0 \\
   0 &  0 & 0
\end{matrix}
\right]
\left[
\begin{matrix}
 \frac{2}{\sqrt5} &  -\frac{1}{\sqrt5} & 0  \\
   \frac{2}{3\sqrt5} & \frac{4}{3\sqrt5} & \frac{5}{3\sqrt5} \\
   \frac{1}{3} &  \frac{2}{3} & -\frac{2}{3}
\end{matrix}
\right]
=
\left[
\begin{matrix}
 \frac{16+2\sqrt5}{15} &  \frac{2+4\sqrt5}{15} &\frac{10-4\sqrt5}{15}  \\
   \frac{-8-\sqrt5}{15} & \frac{-1-2\sqrt5}{15} & \frac{-5+2\sqrt5}{15} \\
   0 &  0 & 0
\end{matrix}
\right],
\\
A_{1}^{\tiny\textcircled{m}}
&
=
\left[
\begin{matrix}
 \frac{2}{\sqrt5} &  \frac{2}{3\sqrt5} & \frac{1}{3}  \\
   -\frac{1}{\sqrt5} & \frac{4}{3\sqrt5} & \frac{2}{3} \\
   0 &  \frac{5}{3\sqrt5} & -\frac{2}{3}
\end{matrix}
\right]
\left[
\begin{matrix}
 \frac{5}{3} &  0 & 0  \\
   0 & 0 &0 \\
   0 &  0 & 0
\end{matrix}
\right]
\left[
\begin{matrix}
 \frac{2}{\sqrt5} &  -\frac{1}{\sqrt5} & 0  \\
   \frac{2}{3\sqrt5} & \frac{4}{3\sqrt5} & \frac{5}{3\sqrt5} \\
   \frac{1}{3} &  \frac{2}{3} & -\frac{2}{3}
\end{matrix}
\right]
\left[
\begin{matrix}
 1 &  0 & 0  \\
   0 & -1 &0 \\
   0 &  0 & -1
\end{matrix}
\right]
=
\left[
\begin{matrix}
 \frac{4}{3} &  \frac{2}{3} &0  \\
   -\frac{2}{3} & -\frac{1}{3} & 0 \\
   0 &  0 & 0
\end{matrix}
\right],\\
\widehat{A_{1}}
&
=
\left[
\begin{matrix}
 \frac{2}{\sqrt5} &  \frac{2}{3\sqrt5} & \frac{1}{3}  \\
   -\frac{1}{\sqrt5} & \frac{4}{3\sqrt5} & \frac{2}{3} \\
   0 &  \frac{5}{3\sqrt5} & -\frac{2}{3}
\end{matrix}
\right]
\left[
\begin{matrix}
 1 &  1 & \frac{11}{3}  \\
   0 & 0 &0 \\
   0 &  0 & 0
\end{matrix}
\right]
\left[
\begin{matrix}
 \frac{2}{\sqrt5} &  -\frac{1}{\sqrt5} & 0  \\
   \frac{2}{3\sqrt5} & \frac{4}{3\sqrt5} & \frac{5}{3\sqrt5} \\
   \frac{1}{3} &  \frac{2}{3} & -\frac{2}{3}
\end{matrix}
\right]
=
\left[
\begin{matrix}
 \frac{48+22\sqrt5}{45} &  \frac{6+44\sqrt5}{45} &\frac{30-44\sqrt5}{45}  \\
   \frac{-24-11\sqrt5}{45} & \frac{-3-22\sqrt5}{45} & \frac{-15+22\sqrt5}{45} \\
   0 &  0 & 0
\end{matrix}
\right],
\\
\widehat{A_{1}}^{\tiny\textcircled{m}}
&
=
\left[
\begin{matrix}
 \frac{2}{\sqrt5} &  \frac{2}{3\sqrt5} & \frac{1}{3}  \\
   -\frac{1}{\sqrt5} & \frac{4}{3\sqrt5} & \frac{2}{3} \\
   0 &  \frac{5}{3\sqrt5} & -\frac{2}{3}
\end{matrix}
\right]
\left[
\begin{matrix}
 \frac{5}{3} &  0 & 0  \\
   0 & 0 &0 \\
   0 &  0 & 0
\end{matrix}
\right]
\left[
\begin{matrix}
 \frac{2}{\sqrt5} &  -\frac{1}{\sqrt5} & 0  \\
   \frac{2}{3\sqrt5} & \frac{4}{3\sqrt5} & \frac{5}{3\sqrt5} \\
   \frac{1}{3} &  \frac{2}{3} & -\frac{2}{3}
\end{matrix}
\right]
\left[
\begin{matrix}
 1 &  0 & 0  \\
   0 & -1 &0 \\
   0 &  0 & -1
\end{matrix}
\right]
=
\left[
\begin{matrix}
 \frac{4}{3} &  \frac{2}{3} &0  \\
   -\frac{2}{3} & -\frac{1}{3} & 0 \\
   0 &  0 & 0
\end{matrix}
\right].
\end{align*}}
It can be observed from above that
$\widehat{A_{1}}\neq A_{1}$.
However,
$ A^{\tiny\textcircled{E}}
=
\widehat{A_{1}}^{\tiny\textcircled{m}}
=
A_{1}^{\tiny\textcircled{m}}$.
\end{example}

\begin{theorem}
Let $A \in\mathbb{C}_{n,n}$
with ${\mbox{\rm Ind}}(A)=k$
 and
 ${\mbox{\rm rk}}\left(A^{k}\right)
 ={\mbox{\rm rk}}\left(\left(A^{k}\right)^{\sim}A^{k}\right)
 =r$.
The ${\mathfrak{m}}$-core-EP decomposition of $A$ is unique.
\end{theorem}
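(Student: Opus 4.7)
The plan is to reduce uniqueness of the $\mathfrak{m}$-core-EP decomposition to the already-established uniqueness of $A^{{\tiny\textcircled{E}}}$ by showing that $B_1$ can be recovered from $A$ alone via the formula $A A^{{\tiny\textcircled{E}}} A = B_1$. Suppose $A = B_1 + B_2 = C_1 + C_2$ are two decompositions satisfying conditions (i), (ii), (iii) of Theorem \ref{WWL-MCE-df4.1}. I would first observe that the proof of Theorem \ref{WWL-MCE-th4.2} invokes only these three abstract properties (together with the $\mathfrak{m}$-core invertibility of the first summand, which follows from (i) via (\ref{MC-Lemma-1.1})), and nowhere relies on the particular form (\ref{WWL-MCE-224}). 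Hence the same computation shows that $B_1^{{\tiny\textcircled{m}}}$ and $C_1^{{\tiny\textcircled{m}}}$ each satisfy the four defining equations of Definition \ref{WWL-MCE-df-3.1}, and by the uniqueness of $A^{{\tiny\textcircled{E}}}$ we obtain $B_1^{{\tiny\textcircled{m}}} = C_1^{{\tiny\textcircled{m}}} = A^{{\tiny\textcircled{E}}}$.

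Next, I would establish the identity $A A^{{\tiny\textcircled{E}}} A = B_1$, which expresses $B_1$ entirely in terms of quantities intrinsic to $A$. Starting from $A = B_1 + B_2$, the $\mathfrak{m}$-core inverse equation $B_1 (B_1^{{\tiny\textcircled{m}}})^2 = B_1^{{\tiny\textcircled{m}}}$ together with $B_2 B_1 = 0$ yields $B_2 B_1^{{\tiny\textcircled{m}}} = B_2 B_1 (B_1^{{\tiny\textcircled{m}}})^2 = 0$, so $A B_1^{{\tiny\textcircled{m}}} = B_1 B_1^{{\tiny\textcircled{m}}}$. Multiplying on the right by $A$ gives
\begin{align*}
A A^{{\tiny\textcircled{E}}} A = B_1 B_1^{{\tiny\textcircled{m}}} B_1 + B_1 B_1^{{\tiny\textcircled{m}}} B_2 = B_1 + B_1 B_1^{{\tiny\textcircled{m}}} B_2,
\end{align*}
where the first summand collapses via $B_1 B_1^{{\tiny\textcircled{m}}} B_1 = B_1$. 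The cross term is handled by the Minkowski symmetry $(B_1 B_1^{{\tiny\textcircled{m}}})^{\sim} = B_1 B_1^{{\tiny\textcircled{m}}}$, rewritten through the anti-multiplicativity of $(\cdot)^{\sim}$ as $B_1 B_1^{{\tiny\textcircled{m}}} = (B_1^{{\tiny\textcircled{m}}})^{\sim} B_1^{\sim}$; condition (iii) then gives $B_1 B_1^{{\tiny\textcircled{m}}} B_2 = (B_1^{{\tiny\textcircled{m}}})^{\sim} B_1^{\sim} B_2 = 0$. The identical chain applied to $(C_1, C_2)$ produces $A A^{{\tiny\textcircled{E}}} A = C_1$, hence $B_1 = C_1$, and therefore $B_2 = A - B_1 = A - C_1 = C_2$.

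The main obstacle is the vanishing of the cross term $B_1 B_1^{{\tiny\textcircled{m}}} B_2$ in the second step: it is the sole place where the genuinely Minkowski-geometric hypothesis $B_1^{\sim} B_2 = 0$ enters essentially, and it requires passing through the Minkowski adjoint of the idempotent $B_1 B_1^{{\tiny\textcircled{m}}}$. Had one used the Euclidean adjoint, an analogous step would give $B_1 B_1^{\dagger} B_2 = (B_1^{\dagger})^{*} B_1^{*} B_2$, which is not annihilated by (iii); this is precisely the point at which the argument differs from the classical uniqueness of the core-EP decomposition. Everything else reduces to routine bookkeeping with the three defining equations of the $\mathfrak{m}$-core inverse and the relation $B_2 B_1 = 0$.
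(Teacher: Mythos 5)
Your argument is correct and follows essentially the same route as the paper: both proofs invoke Theorem \ref{WWL-MCE-th4.2} to get $\widehat{B_1}^{\tiny\textcircled{m}}=\widehat{C_1}^{\tiny\textcircled{m}}=A^{\tiny\textcircled{E}}$, then kill the cross terms $B_2B_1^{\tiny\textcircled{m}}$ (via $B_2B_1=0$ and $AX^2=X$) and $B_1B_1^{\tiny\textcircled{m}}B_2$ (via $(B_1B_1^{\tiny\textcircled{m}})^\sim=B_1B_1^{\tiny\textcircled{m}}$ and $B_1^\sim B_2=0$) to recover $B_1=AA^{\tiny\textcircled{E}}A=C_1$. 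The paper phrases this as pre- and post-multiplying $\widehat{A_1}^{\tiny\textcircled{m}}=\widehat{B_1}^{\tiny\textcircled{m}}$ by $A$, which is the same computation you carry out.
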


\begin{proof}
 Suppose that
 $A=\widehat{A_{1}}+\widehat{A_{2}}$
 is the ${\mathfrak{m}}$-core-EP decomposition of $A$.
 Let $A=\widehat{B_{1}}+\widehat{B_{2}}$
  be another ${\mathfrak{m}}$-core-EP decomposition of $A$.
By Theorem \ref{WWL-MCE-th4.2},
we know that
\begin{align}\label{WWL-MCE-24}
\widehat{A_{1}}^{\tiny\textcircled{m}}
=
\widehat{B_{1}}^{\tiny\textcircled{m}}
=A^{\tiny\textcircled{E}}.
\end{align}
Premultiplying both sides of (\ref{WWL-MCE-24}) with $A$,
then
 \begin{align}
\widehat{A_{1}}\widehat{A_{1}}^{\tiny\textcircled{m}}
+\widehat{A_{2}}\widehat{A_{1}}^{\tiny\textcircled{m}}
=\widehat{B_{1}}\widehat{B_{1}}^{\tiny\textcircled{m}}
+\widehat{B_{2}}\widehat{B_{1}}^{\tiny\textcircled{m}}.
\end{align}
Since
$\widehat{A_{2}}\widehat{A_{1}}^{\tiny\textcircled{m}}=0$
and
$\widehat{B_{2}}\widehat{B_{1}}^{\tiny\textcircled{m}}=0$,
we get
\begin{align}
\label{WWL-MCE-26}
\widehat{A_{1}}\widehat{A_{1}}^{\tiny\textcircled{m}}
=\widehat{B_{1}}\widehat{B_{1}}^{\tiny\textcircled{m}}.
\end{align}
Postmultiplying both sides of (\ref{WWL-MCE-26}) with $A$,
then
\begin{align}
\widehat{A_{1}}\widehat{A_{1}}^{\tiny\textcircled{m}}\widehat{A_{1}}
+
\widehat{A_{1}}\widehat{A_{1}}^{\tiny\textcircled{m}}\widehat{A_{2}}
=
\widehat{B_{1}}\widehat{B_{1}}^{\tiny\textcircled{m}}\widehat{B_{1}}
+
\widehat{B_{1}}\widehat{B_{1}}^{\tiny\textcircled{m}}\widehat{B_{2}}.
\end{align}
Because
  the m-core inverse $\widehat{A_{1}}^{\tiny\textcircled{m}}$
satisfies
$\widehat{A_{1}}^{\tiny\textcircled{m}}A_1\widehat{A_{1}}^{\tiny\textcircled{m}}
=
\widehat{A_{1}}^{\tiny\textcircled{m}}$,
applying
$A_1\widehat{A_{1}}^{\tiny\textcircled{m}}
=
\left(A_1\widehat{A_{1}}^{\tiny\textcircled{m}}\right)^{\sim}$, we get
$\widehat{A_{1}}^{\tiny\textcircled{m}}\widehat{A_{2}}=0$.
In the same way,
we have
$\widehat{B_{1}}^{\tiny\textcircled{m}}\widehat{B_{2}}=0$.
It follows that
$\widehat{A_{1}}=\widehat{B_{1}}$,
that is,
the ${\mathfrak{m}}$-core-EP decomposition of a given matrix is unique.
\end{proof}

\bigskip

\begin{theorem}
Let $A \in\mathbb{C}_{n,n}$
with
${\mbox{\rm Ind}}(A)=k$ and
${\mbox{\rm rk}}\left(A^{k}\right)
=
{\mbox{\rm rk}}\left(\left(A^{k}\right)^{\sim}A^{k}\right)=r$,
and let the ${\mathfrak{m}}$-core-EP decomposition of $A$
be as in Theorem \ref{WWL-MCE-df4.1}.
Then
 \begin{align}
\label{in-eq-5}
\widehat{A_{1}}
=
A^{k}\left(A^{k}\right)^{\tiny\textcircled{m}}A
\ \ and \ \
\widehat{A_{2}}
=
A-A^{k}\left(A^{k}\right)^{\tiny\textcircled{m}}A.
\end{align}
\end{theorem}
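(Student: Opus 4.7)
The plan is to verify both identities by direct computation in the core-EP decomposition basis. All the necessary block forms are already available in the excerpt: the matrix $A$ itself in the form (\ref{WWL-MCE-666}), the power $A^{k}$ in (\ref{WWL-MCE-6}), the m-core inverse $(A^{k})^{\tiny\textcircled{m}}$ in (\ref{WWL-MCE-13}), the block decomposition of $U^{*}GU$ in (\ref{WWL-MCE-4}), and finally the target formulas for $\widehat{A_1}$ and $\widehat{A_2}$ in (\ref{WWL-MCE-224}). So the proof reduces to a chain of three block multiplications and a matching step.

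First, I would compute the product $A^{k}\left(A^{k}\right)^{\tiny\textcircled{m}}$. Using the block forms, the $\widehat{T}$ entry in $A^{k}$ is annihilated by the zero row of $(A^{k})^{\tiny\textcircled{m}}$, so the product collapses to
\begin{align*}
A^{k}\left(A^{k}\right)^{\tiny\textcircled{m}}
=
U\left[\begin{matrix} G_{1}^{-1} & 0 \\ 0 & 0 \end{matrix}\right]U^{*}G.
\end{align*}
Next, I would right-multiply by $A$. Since $GU\cdot U^{*} = G$ and $U^{*}GU$ has the block form (\ref{WWL-MCE-4}), the product $U^{*}G\cdot A = (U^{*}GU)\left[\begin{smallmatrix} T & S \\ 0 & N \end{smallmatrix}\right]U^{*}$ evaluates to $\left[\begin{smallmatrix} G_{1}T & G_{1}S+G_{2}N \\ G_{3}T & G_{3}S+G_{4}N \end{smallmatrix}\right]U^{*}$. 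Left-multiplying by the block diagonal $\left[\begin{smallmatrix} G_{1}^{-1} & 0 \\ 0 & 0 \end{smallmatrix}\right]$ kills the second block row and leaves
\begin{align*}
A^{k}\left(A^{k}\right)^{\tiny\textcircled{m}}A
=
U\left[\begin{matrix} T & S+G_{1}^{-1}G_{2}N \\ 0 & 0 \end{matrix}\right]U^{*},
\end{align*}
which is exactly the expression for $\widehat{A_1}$ displayed in (\ref{WWL-MCE-224}). This gives the first identity.

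For the second identity, I would simply subtract from $A$: using the block form (\ref{WWL-MCE-666}) and the just-derived expression for $\widehat{A_1}$,
\begin{align*}
A - A^{k}\left(A^{k}\right)^{\tiny\textcircled{m}}A
=
U\left[\begin{matrix} T & S \\ 0 & N \end{matrix}\right]U^{*}
-
U\left[\begin{matrix} T & S+G_{1}^{-1}G_{2}N \\ 0 & 0 \end{matrix}\right]U^{*}
=
U\left[\begin{matrix} 0 & -G_{1}^{-1}G_{2}N \\ 0 & N \end{matrix}\right]U^{*},
\end{align*}
which matches the formula for $\widehat{A_2}$ in (\ref{WWL-MCE-224}). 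Alternatively (and equivalently), one notes that $A=\widehat{A_1}+\widehat{A_2}$ is forced by the uniqueness of the $\mathfrak{m}$-core-EP decomposition proved just above, so once $\widehat{A_1}$ is identified, $\widehat{A_2}$ is automatic.

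The proof involves no real obstacle; the only minor subtlety is tracking the factor $G$ through the computation of $A^{k}(A^{k})^{\tiny\textcircled{m}}\cdot A$ so that the $U^{*}G$ on the right of $A^{k}(A^{k})^{\tiny\textcircled{m}}$ interacts correctly with the block form of $A$ via the identity $U^{*}GU=\left[\begin{smallmatrix} G_{1} & G_{2} \\ G_{3} & G_{4} \end{smallmatrix}\right]$. Once that bookkeeping is done, the $(1,2)$ block automatically produces $S+G_{1}^{-1}G_{2}N$, which is precisely the nontrivial feature distinguishing $\widehat{A_1}$ from the core-EP component $A_1$.
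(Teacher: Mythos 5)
Your proposal is correct and follows essentially the same route as the paper: the paper likewise verifies the first identity by a direct block multiplication of $A^{k}$, $(A^{k})^{\tiny\textcircled{m}}$ (in the form (\ref{WWL-MCE-13})) and $A$ using (\ref{WWL-MCE-4}), arriving at $U\left[\begin{smallmatrix} T & S+G_{1}^{-1}G_{2}N \\ 0 & 0\end{smallmatrix}\right]U^{*}=\widehat{A_{1}}$, and then obtains $\widehat{A_{2}}$ by the same subtraction $A-\widehat{A_{1}}$.
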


\begin{proof}
By applying
(\ref{WWL-MCE-5}), (\ref{WWL-MCE-6}) and (\ref{WWL-MCE-4}),
we have
\begin{align}
\nonumber
A^{k}(A^{k})^{\tiny\textcircled{m}}A
&
=
U
\left[
\begin{matrix}
   T^{k} &     \widehat{T}    \\
       0 &       0
\end{matrix}
\right]
U^{*}U
\left[
\begin{matrix}
   T^{-k}G_{1}^{-1} &     0    \\
       0 &       0
\end{matrix}
\right]
U^{*}GU
\left[
\begin{matrix}
   T &     S    \\
       0 &       N
\end{matrix}
\right]
U^{*}
\\
\nonumber
&
=
U
\left[
\begin{matrix}
   T^{k} &     \widehat{T}    \\
       0 &       0
\end{matrix}
\right]
\left[
\begin{matrix}
   T^{-k}G_{1}^{-1} &     0    \\
       0 &       0
\end{matrix}
\right]
\left[
\begin{matrix}
   G_{1} &     G_{2}    \\
       G_{3} &       G_{4}
\end{matrix}
\right]
\left[
\begin{matrix}
   T &     S    \\
       0 &       N
\end{matrix}
\right]
U^{*}
\\
\nonumber
&
=
U
\left[
\begin{matrix}
   T &     S+G_{1}^{-1}G_{2}N    \\
       0 &       0
\end{matrix}
\right]
U^\ast
=
\widehat{A_{1}},
\end{align}
where $G_{i}$($i=1,2,3,4$) are given as in (\ref{WWL-MCE-4}).
Since
$A=\widehat{A_{1}}+\widehat{A_{2}}$,
$\widehat{A_{2}}=A-\widehat{A_{1}}
=
A-A^{k}\left(A^{k}\right)^{\tiny\textcircled{m}}A$.
Therefore,
we get (\ref{in-eq-5}).
\end{proof}

\begin{theorem}
Let $A \in\mathbb{C}_{n,n}$
with ${\mbox{\rm Ind}}(A)=k$ and
${\mbox{\rm rk}}\left(A^{k}\right)
=
{\mbox{\rm rk}}\left(\left(A^{k}\right)^{\sim}A^{k}\right)$,
and let the ${\mathfrak{m}}$-core-EP decomposition of $A$
be as in Theorem \ref{WWL-MCE-df4.1}.
Then
 \begin{align}
\label{in-eq-5-20200117}
\widehat{A_{1}}=AA^{\tiny\textcircled{E}}A
 \ \ and \ \
\widehat{A_{2}}=A-AA^{\tiny\textcircled{E}}A.
\end{align}
\end{theorem}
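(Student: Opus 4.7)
The plan is to prove $\widehat{A_{1}} = A A^{\tiny\textcircled{E}} A$ directly; then the second identity follows immediately from the decomposition $A = \widehat{A_{1}} + \widehat{A_{2}}$ established in Theorem \ref{WWL-MCE-df4.1}, by simply subtracting.

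For the first identity, my preferred route is a block-matrix computation using the same unitary $U$ fixed by the core-EP decomposition (\ref{WWL-MCE-666}). Under the hypothesis ${\mbox{\rm rk}}\left(A^{k}\right) = {\mbox{\rm rk}}\left(\left(A^{k}\right)^{\sim}A^{k}\right)$, Lemma \ref{WWL-MCE-734} guarantees $G_{1}$ is invertible, and Theorem \ref{WWL-MCE-TH3.2} gives the explicit formula (\ref{WWL-MCE-th3.2}) for $A^{\tiny\textcircled{E}}$. Writing $A$, $A^{\tiny\textcircled{E}}$, and $G = U(U^{*}GU)U^{*}$ in block form, one first computes
\begin{align*}
A A^{\tiny\textcircled{E}}
=
U\left[\begin{matrix} T & S \\ 0 & N \end{matrix}\right]
 \left[\begin{matrix} T^{-1}G_{1}^{-1} & 0 \\ 0 & 0 \end{matrix}\right] U^{*}G
=
U\left[\begin{matrix} G_{1}^{-1} & 0 \\ 0 & 0 \end{matrix}\right] U^{*}G,
\end{align*}
and then multiplies on the right by $A$, absorbing the middle factor $U^{*}GU = \left[\begin{smallmatrix}G_{1} & G_{2}\\ G_{3} & G_{4}\end{smallmatrix}\right]$ from (\ref{WWL-MCE-4}). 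The product collapses to $U \left[\begin{smallmatrix} T & S+G_{1}^{-1}G_{2}N \\ 0 & 0 \end{smallmatrix}\right] U^{*}$, which is exactly $\widehat{A_{1}}$ as given in (\ref{WWL-MCE-224}). The calculation is the same sort of block arithmetic already performed in the proof of Theorem \ref{WWL-MCE-TH3.2}, so no genuine obstacle arises; the only thing to track carefully is the cancellation pattern in the $(1,2)$-block, where the term $G_{1}^{-1}G_{2}N$ appears precisely because the identity block $[I \ \ G_{1}^{-1}G_{2}]$ multiplies $\left[\begin{smallmatrix}T & S\\ 0 & N\end{smallmatrix}\right]$.

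As a sanity check (and alternative route), one can combine the previously established characterizations: Theorem \ref{WWL-Theorem-MCE-2} (or the computation preceding (\ref{WWL-MCE-11})) gives $\widehat{A_{1}} = A^{k}(A^{k})^{\tiny\textcircled{m}} A$ — implicit in the intermediate step of that proof — together with the formula $A^{\tiny\textcircled{E}} = A^{k} A^{D} (A^{k})^{\tiny\textcircled{m}}$ from (\ref{WWL-MCE-11}). Then
\begin{align*}
A A^{\tiny\textcircled{E}} A = A \cdot A^{k} A^{D} (A^{k})^{\tiny\textcircled{m}} \cdot A = A^{k+1} A^{D} (A^{k})^{\tiny\textcircled{m}} A,
\end{align*}
and using the standard Drazin identity $A^{k+1} A^{D} = A^{k}$ (which follows from $A A^{D} = A^{D} A$ and $A^{k} A^{D} A = A^{k}$), this reduces to $A^{k} (A^{k})^{\tiny\textcircled{m}} A = \widehat{A_{1}}$. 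Either route yields the conclusion in a few lines.

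Finally, having proved $\widehat{A_{1}} = A A^{\tiny\textcircled{E}} A$, the identity $\widehat{A_{2}} = A - A A^{\tiny\textcircled{E}} A$ is immediate from Theorem \ref{WWL-MCE-df4.1}. I expect the main bookkeeping issue to be keeping the factor $G$ on the right of $A^{\tiny\textcircled{E}}$ in the correct position when computing $A^{\tiny\textcircled{E}} A$, since it is the interaction of this $G$ with the leading factor of $A$ that produces the $(1,2)$-block $G_{1}^{-1}G_{2}$ responsible for distinguishing $\widehat{A_{1}}$ from $A_{1}$.
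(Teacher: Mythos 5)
Your proposal is correct and follows essentially the same route as the paper: the paper also computes $AA^{\tiny\textcircled{E}}A$ by block multiplication using (\ref{WWL-MCE-5}), (\ref{WWL-MCE-th3.2}), and the block form of $U^{*}GU$ from (\ref{WWL-MCE-4}), arriving at $U\left[\begin{smallmatrix} T & S+G_{1}^{-1}G_{2}N \\ 0 & 0 \end{smallmatrix}\right]U^{*}=\widehat{A_{1}}$ and then obtaining $\widehat{A_{2}}$ by subtraction. No discrepancies to report.
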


\begin{proof}
By applying (\ref{WWL-MCE-5})
and (\ref{WWL-MCE-th3.2}),
we have
\begin{align}\nonumber
AA^{\tiny\textcircled{E}}A
&=
U
\left[
\begin{matrix}
   T &     S    \\
       0 &       N
\end{matrix}
\right]
U^{*}U
\left[
\begin{matrix}
   T^{-1}G_{1}^{-1} &     0    \\
       0 &       0
\end{matrix}
\right]
U^{*}GU
\left[
\begin{matrix}
   T &     S    \\
       0 &       N
\end{matrix}
\right]
U^{*}
\\
\nonumber
&
=
U
\left[
\begin{matrix}
   T &     S    \\
       0 &       N
\end{matrix}
\right]
\left[
\begin{matrix}
   T^{-1}G_{1}^{-1} &     0    \\
       0 &       0
\end{matrix}
\right]
\left[
\begin{matrix}
   G_{1} &     G_{2}    \\
       G_{3} &       G_{4}
\end{matrix}
\right]
\left[
\begin{matrix}
   T &     S    \\
       0 &       N
\end{matrix}
\right]
U^{*}
\\
\nonumber
&
=
U
\left[
\begin{matrix}
   T &     S+G_{1}^{-1}G_{2}N    \\
       0 &       0
\end{matrix}
\right]
U^\ast
=
\widehat{A_{1}},
\end{align}
where $G_{i}$($i=1,2,3,4$) are given as in (\ref{WWL-MCE-4}).
Since
$A=\widehat{A_{1}}+\widehat{A_{2}}$,
$\widehat{A_{2}}
=A-\widehat{A_{1}}
=A-AA^{\tiny\textcircled{E}}A$.
Therefore,
we get (\ref{in-eq-5-20200117}).
\end{proof}

\section{The {${\mathfrak{m}}$}-core-EP  order}
{{ In \cite{Wang2019laa299},}}
Wang, Li and Liu considered the {${\mathfrak{m}}$}-core partial order
by applying the   {${\mathfrak{m}}$}-core inverse in $\mathcal{M}$, which is characterized by
\begin{align}
\label{WWL-MCE-3022}
A \mathop \leq \limits ^ {\tiny\textcircled{m}} B
\Leftrightarrow A^{\tiny\textcircled{m}}A
=A^{\tiny\textcircled{m}}B
  \ \  and  \ \
AA^{\tiny\textcircled{m}}
=BA^{\tiny\textcircled{m}}.
\end{align}
Furthermore, it has following property.
{{ \begin{lemma}
[\cite{Wang2019laa299}]
Let $A, B\in\mathbb{C}^{\tiny{\mbox{\rm CM}}}_n $
with
${\mbox{\rm rk}}\left(A^{\sim}A\right)={\mbox{\rm rk}}\left(A\right)=r>0$
and
${\mbox{\rm rk}}\left(B^{\sim}B\right)={\mbox{\rm rk}}\left(B\right)=s\geq r$. If $A \mathop \leq \limits ^ {\tiny\textcircled{m}} B$,
then
\begin{align}
\label{WWL-MCE-2222}
A^{\tiny\textcircled{m}}BB^{\tiny\textcircled{m}}=A^{\tiny\textcircled{m}}~and~B^{\tiny\textcircled{m}}BA^{\tiny\textcircled{m}}=A^{\tiny\textcircled{m}}.
\end{align}
\end{lemma}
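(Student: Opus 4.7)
The strategy is to reduce each equality to a range containment, exploiting the fact that $BB^{\tiny\textcircled{m}}$ and $B^{\tiny\textcircled{m}}B$ are both idempotent projectors with range $\mathcal{R}(B)$. Absorbing either one then amounts to showing that the relevant column space or row space of $A^{\tiny\textcircled{m}}$ already lies inside $\mathcal{R}(B)$, respectively inside the row space of $BB^{\tiny\textcircled{m}}$.

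First I would establish three range identities from the m-core axioms. Axioms $(1)$ and $(2^{l})$ give $\mathcal{R}(A^{\tiny\textcircled{m}}) = \mathcal{R}(A)$ (and likewise for $B$): the containment $\mathcal{R}(A^{\tiny\textcircled{m}}) \subseteq \mathcal{R}(A)$ comes from $A^{\tiny\textcircled{m}} = A(A^{\tiny\textcircled{m}})^{2}$, while $AA^{\tiny\textcircled{m}}A = A$ forces equality of ranks. Axiom $(1)$ then yields $(B^{\tiny\textcircled{m}}B)^{2} = B^{\tiny\textcircled{m}}(BB^{\tiny\textcircled{m}}B) = B^{\tiny\textcircled{m}}B$, showing $B^{\tiny\textcircled{m}}B$ is idempotent; combined with $\mathcal{R}(B^{\tiny\textcircled{m}}B) \subseteq \mathcal{R}(B^{\tiny\textcircled{m}}) = \mathcal{R}(B)$ and $\mathrm{rk}(B^{\tiny\textcircled{m}}B) \geq \mathrm{rk}(BB^{\tiny\textcircled{m}}B) = \mathrm{rk}(B)$, its range is exactly $\mathcal{R}(B)$. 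Finally, axiom $(3^{m})$ rewrites $(AA^{\tiny\textcircled{m}})^{\sim} = AA^{\tiny\textcircled{m}}$ as $A^{\tiny\textcircled{m} *}A^{*} = G\,AA^{\tiny\textcircled{m}}\,G$; since both sides have rank $\mathrm{rk}(A) = \mathrm{rk}(A^{\tiny\textcircled{m} *})$, the trivial containment $\mathcal{R}(A^{\tiny\textcircled{m} *}A^{*}) \subseteq \mathcal{R}(A^{\tiny\textcircled{m} *})$ is an equality, and one concludes $\mathcal{R}(A^{\tiny\textcircled{m} *}) = G\mathcal{R}(A)$, with the analogue $\mathcal{R}((BB^{\tiny\textcircled{m}})^{*}) = G\mathcal{R}(B)$.

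Next, right-multiplying the second order condition $AA^{\tiny\textcircled{m}} = BA^{\tiny\textcircled{m}}$ by $A$ and applying $AA^{\tiny\textcircled{m}}A = A$ yields $A = BA^{\tiny\textcircled{m}}A$, whence $\mathcal{R}(A) \subseteq \mathcal{R}(B)$. Combining with the first range identity, $\mathcal{R}(A^{\tiny\textcircled{m}}) = \mathcal{R}(A) \subseteq \mathcal{R}(B)$, and since $B^{\tiny\textcircled{m}}B$ is a projector onto $\mathcal{R}(B)$ it acts as the identity on every column of $A^{\tiny\textcircled{m}}$: this gives $B^{\tiny\textcircled{m}}BA^{\tiny\textcircled{m}} = A^{\tiny\textcircled{m}}$, the second identity.

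For the first identity I would pass to row spaces: $\mathcal{R}(A^{\tiny\textcircled{m} *}) = G\mathcal{R}(A) \subseteq G\mathcal{R}(B) = \mathcal{R}((BB^{\tiny\textcircled{m}})^{*})$, so every row of $A^{\tiny\textcircled{m}}$ is a linear combination of rows of $BB^{\tiny\textcircled{m}}$, yielding a factorization $A^{\tiny\textcircled{m}} = Y\,BB^{\tiny\textcircled{m}}$ for some matrix $Y$. Then idempotency of $BB^{\tiny\textcircled{m}}$ gives $A^{\tiny\textcircled{m}}BB^{\tiny\textcircled{m}} = Y(BB^{\tiny\textcircled{m}})^{2} = Y\,BB^{\tiny\textcircled{m}} = A^{\tiny\textcircled{m}}$. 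The main obstacle is the row-space identity $\mathcal{R}(A^{\tiny\textcircled{m} *}) = G\mathcal{R}(A)$: it is only via the careful interplay of axiom $(3^{m})$ with the Minkowski metric $G$ and the rank count above that the row space of $A^{\tiny\textcircled{m}}$ can be pinned down. Without this translation, the symmetry between the two claims breaks, since the order conditions $A^{\tiny\textcircled{m}}A = A^{\tiny\textcircled{m}}B$ and $AA^{\tiny\textcircled{m}} = BA^{\tiny\textcircled{m}}$ are not themselves self-dual under $\sim$.
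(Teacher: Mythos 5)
Your proof is correct. Note, however, that the paper itself offers no proof of this lemma: it is imported verbatim from \cite{Wang2019laa299}, so there is no in-paper argument to compare against. What you supply is a coordinate-free derivation, whereas the paper's uniform technique for facts of this kind (see, e.g., the proofs of Theorem \ref{WWL-MCE-TH3.2} or Theorem \ref{WWL-MCE-th5.5}) is to pass to the unitary block form (\ref{WWL-MCE-666}) together with the representation $A^{\tiny\textcircled{m}}=U\left[\begin{smallmatrix} T^{-1}\widehat{G}_1^{-1} & 0\\ 0&0\end{smallmatrix}\right]U^{*}G$ and verify the identities by block multiplication; your route avoids that machinery entirely. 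Each step checks out: $\mathcal{R}(A^{\tiny\textcircled{m}})=\mathcal{R}(A)$ from axioms $(1)$ and $(2^{l})$; idempotency of $B^{\tiny\textcircled{m}}B$ and $BB^{\tiny\textcircled{m}}$ with range $\mathcal{R}(B)$; the row-space identity $\mathcal{R}\bigl((A^{\tiny\textcircled{m}})^{*}\bigr)=G\mathcal{R}(A)$ via $(3^{m})$ and the rank count; and the containment $\mathcal{R}(A)\subseteq\mathcal{R}(B)$ obtained by multiplying $AA^{\tiny\textcircled{m}}=BA^{\tiny\textcircled{m}}$ on the right by $A$. One observation worth recording: your argument uses only the condition $AA^{\tiny\textcircled{m}}=BA^{\tiny\textcircled{m}}$ and never invokes $A^{\tiny\textcircled{m}}A=A^{\tiny\textcircled{m}}B$, so you have in fact proved a slightly stronger statement than the lemma as cited -- the conclusion (\ref{WWL-MCE-2222}) already follows from half of the order (\ref{WWL-MCE-3022}) together with the existence of both ${\mathfrak{m}}$-core inverses.
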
}}
In this section,
we introduce a new  order (called
the {${\mathfrak{m}}$}-core-EP  order),
consider its properties
and
get some characterizations of it.
It is true that
the {${\mathfrak{m}}$}-core-EP  order
is a generalization of
the {${\mathfrak{m}}$}-core partial order,
but it is a pre-order not a partial order.
Let $A ,B\in\mathbb{C}_{n,n}$,
with ${\mbox{\rm rk}}(A^{k})
={\mbox{\rm rk}}((A^{k})^{\sim}A^{k})
=r$.
We define the {${\mathfrak{m}}$}-core-EP order,
which is characterized by
\begin{align}
\label{WWL-MCE-30}
A \mathop \leq \limits ^ {\tiny\textcircled{E}} B
\Leftrightarrow A^{\tiny\textcircled{E}}A
=A^{\tiny\textcircled{E}}B
  \ \  and  \ \
AA^{\tiny\textcircled{E}}
=BA^{\tiny\textcircled{E}}.
\end{align}

\begin{theorem}
Let $A, B\in\mathbb{C}_{n,n}$,
 ${\mbox{\rm Ind}}(A)=k$,
  ${\mbox{\rm Ind}}(B)=t$,
${\mbox{\rm rk}}\left(A^{k}\right)
={\mbox{\rm rk}}\left(\left(A^{k}\right)^{\sim}A^{k}\right)=r$
and
${\mbox{\rm rk}}\left(B^{t}\right)
={\mbox{\rm rk}}\left(\left(B^{t}\right)^{\sim}B^{t}\right)=s\geq r$.
If $A \mathop \leq \limits ^ {\tiny\textcircled{E}} B$,
then there exists a unitary matrix
$\widehat{U}$
such that
{\footnotesize \begin{align}
\label{WWL-MCE-5.6}
A
&
=
\widehat{U}
\left[
\begin{matrix}
       T   &  S_1  &  S_2   \\
       0  &       \widehat{N_{11}}& \widehat{N_{12}}\\
       0  &       \widehat{N_{13}} &\widehat{N_{14}}
\end{matrix}
\right]
\widehat{U}^\ast ,
\\
\label{WWL-MCE-5.7}
B
&
=
\widehat{U}
\left[
\begin{matrix}
       T   &
       S_1 +
       G_1^{-1}
       {\left[\begin{matrix}
       G_{21} &  G_{22}
       \end{matrix}\right]}
       {\left[\begin{matrix}
       \widehat{N_{11}}-\widetilde{T}
       \\
       \widehat{N_{13}}
       \end{matrix}\right]}
           & S_2
           + G_1^{-1}
                  {\left[\begin{matrix}
       G_{21} &  G_{22}
       \end{matrix}\right]}
       {\left[\begin{matrix}
       \widehat{N_{12}}-\widehat{S }
       \\
       \widehat{N_{14}}-\widehat{N}
       \end{matrix}\right]}
        \\
       0  &      \widetilde{T}& \widehat{S}\\
       0  &       0 &\widehat{N}
\end{matrix}
\right]
\widehat{U}^\ast ,
\end{align}}
where
$\left[
\begin{matrix}
     \widehat{N_{11}}  &  \widehat{N_{12}}   \\
      \widehat{N_{13}}&  \widehat{N_{14}}
\end{matrix}
\right]\in \mathbb{C}_{n-r,n-r}$
and $\widehat{N}\in \mathbb{C}_{n-s,n-s}$
are nilpotent,
and
$ {T}\in \mathbb{C}_{r,r}$,
$\widetilde{T}\in \mathbb{C}_{s-r,s-r}$,
$G_1\in \mathbb{C}_{r,r}$ and
$\widehat{G_1}=\left[
\begin{matrix}
       G_1   &  G_{21}   \\
       G_{31}&  G_{41}   \\
\end{matrix}
\right]\in \mathbb{C}_{s,s}$
are invertible,
and
\begin{align}
\label{MC-1-3}
\widehat{U}^\ast G\widehat{U}
=
\left[
\begin{matrix}
       G_1   &  G_{21}  &   G_{22}  \\
       G_{31}&  G_{41}  &   G_{42}  \\
       G_{32}&  G_{43}  &   G_{44}  \\
\end{matrix}
\right].
\end{align}
\end{theorem}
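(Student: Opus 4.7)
The plan is to first realize $A$ in its core-EP decomposition form, use the $\mathfrak{m}$-core-EP order relations to pin down the blocks of $B$ in the same basis, and then refine the basis with a second unitary coming from the core-EP decomposition of the remaining nilpotent-type block of $B$.

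First, since ${\mbox{\rm rk}}(A^{k})={\mbox{\rm rk}}((A^{k})^{\sim}A^{k})=r$, Theorem \ref{WWL-MCE-TH3.2} gives the existence of $A^{\tiny\textcircled{E}}$, the unitary $U$ realizing (\ref{WWL-MCE-666}), and the explicit formula (\ref{WWL-MCE-th3.2}). Write $U^{*}GU$ as in (\ref{WWL-MCE-4}) and expand $B$ in the same basis as $B=U\begin{bmatrix} B_{11} & B_{12}\\ B_{21} & B_{22}\end{bmatrix}U^{*}$. Substituting $A$, $B$, and $A^{\tiny\textcircled{E}}$ into the order relation $AA^{\tiny\textcircled{E}}=BA^{\tiny\textcircled{E}}$ and comparing blocks (using that $T$ and $G_{1}$ are invertible) forces $B_{11}=T$ and $B_{21}=0$. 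Substituting into $A^{\tiny\textcircled{E}}A=A^{\tiny\textcircled{E}}B$ similarly yields $B_{12}=S+G_{1}^{-1}G_{2}(N-B_{22})$.

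Next, since the resulting $U^{*}BU=\begin{bmatrix} T & B_{12}\\ 0 & B_{22}\end{bmatrix}$ is block upper triangular with $T$ invertible, column-elimination gives ${\mbox{\rm rk}}(B^{m})=r+{\mbox{\rm rk}}(B_{22}^{m})$ for every $m\ge 1$, so ${\mbox{\rm Ind}}(B_{22})=t$ and ${\mbox{\rm rk}}(B_{22}^{t})=s-r$. Applying the core-EP decomposition to $B_{22}$ produces a unitary $W\in\mathbb{C}_{n-r,n-r}$ with $W^{*}B_{22}W=\begin{bmatrix} \widetilde{T} & \widehat{S}\\ 0 & \widehat{N}\end{bmatrix}$, where $\widetilde{T}\in\mathbb{C}_{s-r,s-r}$ is invertible and $\widehat{N}^{t}=0$. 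Set $\widehat{U}=U\begin{bmatrix} I_{r} & 0\\ 0 & W\end{bmatrix}$, which is unitary, and refine the partition of the bottom-right blocks into pieces of sizes $r$, $s-r$, $n-s$: write $SW=[S_{1},S_{2}]$, $W^{*}NW=\begin{bmatrix} \widehat{N_{11}} & \widehat{N_{12}}\\ \widehat{N_{13}} & \widehat{N_{14}}\end{bmatrix}$, and correspondingly refine $\widehat{U}^{*}G\widehat{U}$ to obtain (\ref{MC-1-3}) with $G_{2}W=[G_{21},G_{22}]$. Then (\ref{WWL-MCE-5.6}) is immediate, and computing $B_{12}W=SW+G_{1}^{-1}(G_{2}N-G_{2}B_{22})W$ block-by-block gives the two top-right entries in (\ref{WWL-MCE-5.7}).

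Finally, the invertibility of $\widehat{G_{1}}$ is obtained as follows: in the $\widehat{U}$-basis, $B$ has block form $\begin{bmatrix} T_{B} & \ast\\ 0 & \widehat{N}\end{bmatrix}$, where $T_{B}=\begin{bmatrix} T & \ast\\ 0 & \widetilde{T}\end{bmatrix}$ is invertible of size $s$ and $\widehat{N}^{t}=0$, so $\widehat{U}$ realizes the core-EP decomposition of $B$ in the sense of (\ref{WWL-MCE-666}). Because $B$ satisfies ${\mbox{\rm rk}}(B^{t})={\mbox{\rm rk}}((B^{t})^{\sim}B^{t})$, Lemma \ref{WWL-MCE-734} applied to $B$ forces the top-left $s\times s$ block of $\widehat{U}^{*}G\widehat{U}$, namely $\widehat{G_{1}}$, to be invertible. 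The main obstacle is the bookkeeping in the block-by-block identification for the top-right entry of (\ref{WWL-MCE-5.7}), since the Minkowski metric refinement interacts with the decomposition of $N$ and $B_{22}$; once the refined partition is fixed, the rest is routine matrix algebra.
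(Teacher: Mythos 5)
Your proposal is correct and follows essentially the same route as the paper's proof: derive $B_{11}=T$, $B_{21}=0$, $B_{12}=S+G_1^{-1}G_2(N-B_{22})$ from the two order relations, apply the core-EP decomposition to the trailing block $B_{22}$ via a second unitary, set $\widehat{U}=U\,\mathrm{diag}(I_r,U_1)$, refine the partitions of $S$, $N$, and $U^{*}GU$, and invoke Lemma \ref{WWL-MCE-734} for the invertibility of $G_1$ and $\widehat{G_1}$. Your version even supplies slightly more justification than the paper (the rank bookkeeping ${\mbox{\rm rk}}(B^m)=r+{\mbox{\rm rk}}(B_{22}^m)$ fixing the size $s-r$ of $\widetilde{T}$, and the observation that $\widehat{U}$ realizes a core-EP form of $B$ so that Lemma \ref{WWL-MCE-734} applies), so no gaps remain.
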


\begin{proof}
Let $A$ be of the form (\ref{WWL-MCE-5}).
Applying (\ref{WWL-MCE-30})
and
Theorem \ref{WWL-MCE-TH3.2},
we observe that
\begin{align}
\label{MC-20}
A A^{{\tiny\textcircled{E}}}
=
U
\left[
\begin{matrix}
      G_1^{-1}  &       0 \\
       0        &       0
\end{matrix}
\right]
U^\ast G
\
\mbox{ \rm and }
\
A^{{\tiny\textcircled{E}}} A
=
U
\left[
\begin{matrix}
      I_r  &       T^{-1} S +T^{-1}G_{1}^{-1}G_{2}N\\
       0   &          0
\end{matrix}
\right]
U^\ast .
\end{align}

Suppose that
$U^\ast B U\in \mathbb{C}_{n,n}$ is
partitioned as
\begin{align}
 U^\ast B U
=
\left[
\begin{matrix}
       Q  &       Z \\
       M  &       P
\end{matrix}
\right],
\end{align}
where
$ {Q}\in \mathbb{C}_{r,r}$.
Applying (\ref{WWL-MCE-4}), we give
\begin{equation}
\label{MC-21}
\begin{aligned}
B A^{{\tiny\textcircled{E}}}
&
=
U
\left[
\begin{matrix}
      Q(G_1  T )^{-1}  &    0 \\
      M(G_1 T )^{-1}  &   0
\end{matrix}
\right]
U^\ast G ,
\\
A^{{\tiny\textcircled{E}}} B
&
=
U
\left[
\begin{matrix}
       T ^{-1}Q + (G_1 T )^{-1} G_2 M
       &      T ^{-1} Z + (G_1 T )^{-1}G_2 P \\
                       0
       &                    0
\end{matrix}
\right].
\end{aligned}
\end{equation}
Since
$A A^{{\tiny\textcircled{E}}}
= B A^{{\tiny\textcircled{E}}}$
and
$A^{{\tiny\textcircled{E}}} A
= A^{{\tiny\textcircled{E}}} B$,
we derive
$Q =T$,
 $M = 0$
 and
 $Z = S-G_{1}^{-1}G_{2}P+G_{1}^{-1}G_{2}N$.
 Therefore,
\begin{align}
\label{min-orde-eq-2}
B
&
=
U
\left[
\begin{matrix}
       T   &      S-G_{1}^{-1}G_{2}P+G_{1}^{-1}G_{2}N \\
       0  &       P
\end{matrix}
\right]
U^\ast.
\end{align}

Let
\begin{align}
\label{MC-1-1}
P=
U_1
\left[
\begin{matrix}
\widetilde{T}  &    \widehat{S}  \\
   0         &  \widehat{N}
\end{matrix}
\right]
U_1^\ast ,
\end{align}
where $\widetilde{T}$ is invertible,
$\widehat{N}$ is nilpotent,
and $U_{1}$ is unitary.
Denote
\begin{align}
\label{MC-1-2}
\widehat{U}=U\left[
\begin{matrix}
   I_r &    0  \\
   0   &    U_1
\end{matrix}
\right].
\end{align}

Applying
 (\ref{WWL-MCE-5}) and (\ref{WWL-MCE-666}),
we have
\begin{align}
\label{WWL-MCE-132}
A&
=
\widehat{U}
\left[
\begin{matrix}
{T}  &    {S} U_1 \\
   0         &  U_{1}^{*}NU_{1}
\end{matrix}
\right]
\widehat{U}^\ast
\ \ {\rm and } \ \
N
=
{U_{1}}
\left[
\begin{matrix}
\widehat{N_{11}}  &    \widehat{N_{12}} \\
   \widehat{N_{13}}         &  \widehat{N_{14}}
\end{matrix}
\right]
{U_{1}^\ast}.
\end{align}
Applying (\ref{WWL-MCE-132}) and (\ref{WWL-MCE-4}) ,
we have
\begin{align}
\label{MC-1-4}
\widehat{U}^\ast G\widehat{U}
&
=
\left[
\begin{matrix}
   I_r &    0  \\
   0   &    U_1^\ast
\end{matrix}
\right]U^\ast
GU\left[
\begin{matrix}
   I_r &    0  \\
   0   &    U_1
\end{matrix}
\right]
=
\left[
\begin{matrix}
  G_1   &  G_2 U_1 \\
    U_1^\ast G_3   &    U_1^\ast G_4 U_1
\end{matrix}
\right].
\end{align}
Furthermore, denote
\begin{align}
\label{MC-1-5}
\begin{aligned}
S U_1
&
=
\left[
\begin{matrix}
S_{1}  &   S_{2}
\end{matrix}
\right],
\
G_2 U_1
=
\left[
\begin{matrix}
G_{21}  &   G_{22}
\end{matrix}
\right],
\
\\
 U_1^\ast G_3
&
=
\left[
\begin{matrix}
G_{31}  \\   G_{32}
\end{matrix}
\right],
\ {\rm and } \
 U_1^\ast G_4 U_1
=
\left[
\begin{matrix}
 G_{41}  &   G_{42}  \\
 G_{43}  &   G_{44}  \\
\end{matrix}
\right],
\end{aligned}
\end{align}
where
$G_{41}\in \mathbb{C}_{s-r,s-r}$.
Substituting
(\ref{MC-1-5})
into
(\ref{MC-1-4}),
we obtain (\ref{MC-1-3}).

\bigskip

  By applying
Lemma \ref{WWL-MCE-734}
to
${\rm rk}\left(\left(A^{k}\right)^\sim A^{k}\right)
=
{\rm rk}\left(A^{k}\right)$
and
${\rm rk}\left(\left(B^{t}\right)^\sim B^{t}\right)
=
{\rm rk}\left(B^{t}\right)$,
we conclude that
$G_1\in \mathbb{C}_{r,r}$,
 $\left[
\begin{matrix}
       G_1   &  G_{21}   \\
       G_{31}&  G_{41}   \\
\end{matrix}
\right]\in \mathbb{C}_{s,s}$
and
$\widetilde{T}\in \mathbb{C}_{s-r,s-r}$ are invertible.

\bigskip

Substituting
(\ref{MC-1-5})
and
(\ref{MC-1-1}) into (\ref{min-orde-eq-2}), and $G_{1}$ is given as in (\ref{WWL-MCE-4}),
we obtain
\begin{align*}
B
&
=
U
\left[
\begin{matrix}
       T   &      S - G_1^{-1} G_2 {U_1
\left[
\begin{matrix}
\widetilde{T}  &    \widehat{S}  \\
   0         &  \widehat{N}
\end{matrix}
\right]
U_1^\ast }+G_{1}^{-1}G_{2}N \\
       0  &       U_{1}{
\left[
\begin{matrix}
\widetilde{T}  &    \widehat{S}  \\
   0         &  \widehat{N}
\end{matrix}
\right]
U_1^\ast}
\end{matrix}
\right]
U^{*}%
\\
&
=
\widehat{U}
\left[
\begin{matrix}
       T   &         \left[
\begin{matrix}
S_{1}  &   S_{2}
\end{matrix}
\right] - G_1^{-1}\left[
\begin{matrix}
G_{21}  &   G_{22}
\end{matrix}
\right]\left[
\begin{matrix}
\widetilde{T}  &    \widehat{S}  \\
   0         &  \widehat{N}
\end{matrix}
\right]+G_{1}^{-1}(G_{2}U_{1})(U_{1}^\ast NU_{1})\\
       0  &       {
\left[
\begin{matrix}
\widetilde{T}  &    \widehat{S}  \\
   0         &  \widehat{N}
\end{matrix}
\right]}
\end{matrix}
\right]
\widehat{U}^\ast,
\end{align*}
that is, (\ref{WWL-MCE-5.7}).
\end{proof}

\begin{theorem}
Let $A, B\in\mathbb{C}_{n,n}$,
 ${\mbox{\rm Ind}}(A)=k$,
  ${\mbox{\rm Ind}}(B)=t$,
${\mbox{\rm rk}}\left(A^{k}\right)
={\mbox{\rm rk}}\left(\left(A^{k}\right)^{\sim}A^{k}\right)=r$
and
${\mbox{\rm rk}}\left(B^{t}\right)
={\mbox{\rm rk}}\left(\left(B^{t}\right)^{\sim}B^{t}\right)=s\geq r$.
Then
\begin{align*}
A \mathop \leq \limits ^ {\tiny\textcircled{E}} B
\Leftrightarrow A^{k+1}
=
BA^{k}
~and~
A^{\sim}A^{k}
=
B^{\sim}A^{k}.
\end{align*}
\end{theorem}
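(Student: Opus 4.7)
My plan is to prove both implications by exploiting the fact that $P := AA^{{\tiny{\textcircled{E}}}}$ is a Minkowski--self-adjoint idempotent whose range is exactly $\mathcal{R}(A^{k})$. Before tackling either direction, I will set up the key range/kernel identities
\begin{align*}
\mathcal{R}(AA^{{\tiny{\textcircled{E}}}})
= \mathcal{R}(A^{k})
= \mathcal{R}\bigl((A^{{\tiny{\textcircled{E}}}})^{\sim}\bigr),
\qquad
\mathcal{N}\bigl((A^{k})^{\sim}\bigr) = \mathcal{N}(AA^{{\tiny{\textcircled{E}}}}).
\end{align*}
The range chain is obtained as follows: $(2^{k})$ gives $AA^{{\tiny{\textcircled{E}}}}A^{k+1}=A^{k+1}$, so $\mathcal{R}(A^{k})=\mathcal{R}(A^{k+1})\subseteq\mathcal{R}(AA^{{\tiny{\textcircled{E}}}})$, and the rank equality $\mathrm{rk}(A^{{\tiny{\textcircled{E}}}})=\mathrm{rk}(A^{k})$ established inside the proof of Theorem~\ref{WWL-MCE-TH3.2} closes the inclusions. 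Dualising through $AA^{{\tiny{\textcircled{E}}}}=(AA^{{\tiny{\textcircled{E}}}})^{\sim}=(A^{{\tiny{\textcircled{E}}}})^{\sim}A^{\sim}$ transports the range identity to $(A^{{\tiny{\textcircled{E}}}})^{\sim}$. The kernel identity uses the general fact that, since $G$ is a unitary involution, any $\sim$-self-adjoint matrix $M$ satisfies $\mathcal{N}(M)=G\,\mathcal{N}(M^{*})=G\,\mathcal{R}(M^{\sim})^{\perp}$; applying this to both $P=AA^{{\tiny{\textcircled{E}}}}$ and $(A^{k})^{\sim}$ (whose column space, after Minkowski-adjoint, is $\mathcal{R}(A^{k})$) yields the common value $G\,\mathcal{R}(A^{k})^{\perp}$.

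For the forward direction, I obtain $A^{k+1}=BA^{k}$ by right-multiplying $AA^{{\tiny{\textcircled{E}}}}=BA^{{\tiny{\textcircled{E}}}}$ by $A^{k+1}$ and invoking $A^{{\tiny{\textcircled{E}}}}A^{k+1}=A^{k}$. For the second equation, the range identity lets me write $A^{k}=(A^{{\tiny{\textcircled{E}}}})^{\sim}Q$ for some matrix $Q$, and then
\begin{align*}
A^{\sim}A^{k}
=A^{\sim}(A^{{\tiny{\textcircled{E}}}})^{\sim}Q
=(A^{{\tiny{\textcircled{E}}}}A)^{\sim}Q
=(A^{{\tiny{\textcircled{E}}}}B)^{\sim}Q
=B^{\sim}(A^{{\tiny{\textcircled{E}}}})^{\sim}Q
=B^{\sim}A^{k},
\end{align*}
where the middle step uses the hypothesis $A^{{\tiny{\textcircled{E}}}}A=A^{{\tiny{\textcircled{E}}}}B$.

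For the reverse direction, the range condition $(4^{r})$ yields $A^{{\tiny{\textcircled{E}}}}=A^{k}P_{0}$ for some $P_{0}$, so $AA^{{\tiny{\textcircled{E}}}}-BA^{{\tiny{\textcircled{E}}}}=(A^{k+1}-BA^{k})P_{0}=0$ directly from $A^{k+1}=BA^{k}$. To get $A^{{\tiny{\textcircled{E}}}}A=A^{{\tiny{\textcircled{E}}}}B$ I take the Minkowski adjoint of $A^{\sim}A^{k}=B^{\sim}A^{k}$ to produce $(A^{k})^{\sim}(A-B)=0$; the kernel identity then forces $AA^{{\tiny{\textcircled{E}}}}(A-B)=0$, and applying the relation $A^{{\tiny{\textcircled{E}}}}AA^{{\tiny{\textcircled{E}}}}=A^{{\tiny{\textcircled{E}}}}$ from Definition~\ref{WWL-MCE-df-3.1}(1) gives $A^{{\tiny{\textcircled{E}}}}(A-B)=A^{{\tiny{\textcircled{E}}}}\bigl(AA^{{\tiny{\textcircled{E}}}}(A-B)\bigr)=0$.

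I expect the main obstacle to be the kernel equality $\mathcal{N}((A^{k})^{\sim})=\mathcal{N}(AA^{{\tiny{\textcircled{E}}}})$, since this is the bridge that converts information on the $\sim$-side (the hypothesis $A^{\sim}A^{k}=B^{\sim}A^{k}$) into information about $A^{{\tiny{\textcircled{E}}}}$, and it is essentially a Minkowski-orthogonal complement argument rather than a purely algebraic one. Once the $\sim$-self-adjoint projector viewpoint is installed, the remaining steps are short manipulations built from $(2^{k})$, $(4^{r})$, and $A^{{\tiny{\textcircled{E}}}}AA^{{\tiny{\textcircled{E}}}}=A^{{\tiny{\textcircled{E}}}}$.
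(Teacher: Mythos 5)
Your proposal is correct, and it takes a genuinely different route from the paper. The paper proves both implications by brute-force block computation: it writes $A$, $B$, $A^{{\tiny{\textcircled{E}}}}$, $(A^{k+1})^{\tiny\textcircled{m}}$ and $U^{*}GU$ in the explicit core-EP coordinates of (\ref{WWL-MCE-666}) and (\ref{WWL-MCE-4}), uses the structural form (\ref{min-orde-eq-2}) of $B$ obtained from the preceding theorem to verify $BA^{k}=A^{k+1}$ and $A^{\sim}A^{k}=B^{\sim}A^{k}$ entrywise (invoking Remark \ref{WZHBAP-MT1-Remark} for $G_{3}^{*}=G_{2}$), and for the converse verifies by block multiplication the identity $A^{{\tiny{\textcircled{E}}}}A^{\sim}(A^{{\tiny{\textcircled{E}}}})^{\sim}=A^{{\tiny{\textcircled{E}}}}$, which is exactly the cancellation your kernel identity $\mathcal{N}\left((A^{k})^{\sim}\right)=\mathcal{N}\left(AA^{{\tiny{\textcircled{E}}}}\right)$ delivers. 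You instead argue coordinate-free from the four defining equations of Definition \ref{WWL-MCE-df-3.1}, installing the projector viewpoint $\mathcal{R}\left(AA^{{\tiny{\textcircled{E}}}}\right)=\mathcal{R}\left(A^{k}\right)=\mathcal{R}\left((A^{{\tiny{\textcircled{E}}}})^{\sim}\right)$ and the matching kernel equality via $\mathcal{N}(M^{\sim})=G\,\mathcal{R}(M)^{\perp}$; I checked each step (including the rank chase closing the range inclusions and the factorizations $A^{{\tiny{\textcircled{E}}}}=A^{k}P_{0}$, $A^{k}=(A^{{\tiny{\textcircled{E}}}})^{\sim}Q$) and all are sound. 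What your approach buys is independence from the particular decomposition and from the form of $B$ derived in the previous theorem, so it would transfer to other settings where an inverse satisfying the same four axioms exists; what the paper's approach buys is that the block formulas are already on hand from earlier sections, so no new structural lemmas about ranges and kernels need to be stated. One small presentational point: $(A^{k})^{\sim}$ is not itself $\sim$-self-adjoint, so when you invoke the "general fact" for it you are really using the unconditional identity $\mathcal{N}(M^{\sim})=G\,\mathcal{N}(M^{*})=G\,\mathcal{R}(M)^{\perp}$ rather than the self-adjoint special case; the computation is the same, but you should phrase it that way.
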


\begin{proof}
 '$\Leftarrow$'
 {{ Let $A$ be of the form (\ref{WWL-MCE-5}).
 Applying
(\ref{WWL-MCE-th3.2}),}}
 we have
\begin{align}
\nonumber
(A^{k+1})^{\tiny\textcircled{m}}
=
U
\left[
\begin{matrix}
 (T^{k+1})^{-1}G_{1}^{-1} &  0\\
   0     &  0
\end{matrix}
\right]
U^{*}G, \ \
A^{\tiny\textcircled{E}}=
U
\left[
\begin{matrix}
 T^{-1}G_{1}^{-1} &  0\\
   0     &  0
\end{matrix}
\right]
U^{*}G
\end{align}
and
\begin{align}
\nonumber
A^{k}(A^{k+1})^{\tiny\textcircled{m}}
&=
U
\left[
\begin{matrix}
   T^{k}  &      \widehat{T}  \\
       0   &       0
\end{matrix}
\right]
U^{*}U
\left[
\begin{matrix}
   (T^{k+1})^{-1}G_{1}^{-1} &       0 \\
       0     &       0
\end{matrix}
\right]
U^{*}G
=
A^{\tiny\textcircled{E}}.
\end{align}

Since
$A^{k+1}(A^{k+1})^{\tiny\textcircled{m}}
=
AA^{k}(A^{k+1})^{\tiny\textcircled{m}}
=
BA^{k}(A^{k+1})^{\tiny\textcircled{m}}$,
 we get
$AA^{\tiny\textcircled{E}}
=BA^{\tiny\textcircled{E}}$.

Since
$A^{\sim}A^{k}=B^{\sim}A^{k}$
and
$A^{\sim}A^{k}(A^{k+1})^{\tiny\textcircled{m}}
=
B^{\sim}A^{k}(A^{k+1})^{\tiny\textcircled{m}}$,
we get
$A^{\sim}A^{\tiny\textcircled{E}}
=
B^{\sim}A^{\tiny\textcircled{E}}$. 
Taking the Minkowski transpose of both sides,
we have
$(A^{\tiny\textcircled{E}})^{\sim}A
=
(A^{\tiny\textcircled{E}})^{\sim}B$.
Premultiplying both sides by
$A^{\tiny\textcircled{E}}A^{\sim}$,
then
$A^{\tiny\textcircled{E}}A^{\sim}(A^{\tiny\textcircled{E}})^{\sim}A
=
A^{\tiny\textcircled{E}}A^{\sim}(A^{\tiny\textcircled{E}})^{\sim}B$.
{{ From}}
\begin{align}
\nonumber
A^{\sim}
&
=
GA^{*}G
=
GU
\left[
\begin{matrix}
   T^{*}  &      0  \\
       S^{*}   &       N^{*}
\end{matrix}
\right]
U^{*}G,
\\
\nonumber
(A^{\tiny\textcircled{E}})^{\sim}
&=
GG^{*}U
\left[
\begin{matrix}
   (T^{-1}G_{1}^{-1})^{*}  &      0  \\
       0   &       0
\end{matrix}
\right]
U^{*}G
=
U
\left[
\begin{matrix}
   (T^{-1}G_{1}^{-1})^{*}  &      0  \\
       0   &       0
\end{matrix}
\right]
U^{*}G,
\end{align}
and
\begin{align}
\nonumber
A^{\tiny\textcircled{E}}
A^{\sim}(A^{\tiny\textcircled{E}})^{\sim}
&=
U
\left[
\begin{matrix}
   T^{-1}G_{1}^{-1}  &      0  \\
       0   &       0
\end{matrix}
\right]
U^{*}GGU
\left[
\begin{matrix}
   T^{*}  &      0  \\
       S^{*}   &       N^{*}
\end{matrix}
\right]
U^{*}GU
\left[
\begin{matrix}
   (T^{-1}G_{1}^{-1})^{*}  &      0  \\
       0   &       0
\end{matrix}
\right]
U^{*}G
\\
\nonumber
&
=
U
\left[
\begin{matrix}
   T^{-1}G_{1}^{-1}  &      0  \\
       0   &       0
\end{matrix}
\right]
\left[
\begin{matrix}
   T^{*}  &      0  \\
       S^{*}   &       N^{*}
\end{matrix}
\right]
\left[
\begin{matrix}
   G_{1} &      G_{2}  \\
       G_{3}   &       G_{4}
\end{matrix}
\right]
\left[
\begin{matrix}
   (T^{-1}G_{1}^{-1})^{*}  &      0  \\
       0   &       0
\end{matrix}
\right]
U^{*}G
\\
\nonumber
&
=
U
\left[
\begin{matrix}
   T^{-1}G_{1}^{-1}  &      0  \\
       0   &       0
\end{matrix}
\right]
U^{*}G
=
A^{\tiny\textcircled{E}},
\end{align}
we get
$A^{\tiny\textcircled{E}}A
=
A^{\tiny\textcircled{E}}B$.

\bigskip

'$\Rightarrow$'
{{ Let $A=A_1+A_2$ be the core-EP decomposition of $A$,}}
 and
$A_1$ and $A_2$ be as in  (\ref{WWL-MCE-666}).
Applying (\ref{WWL-MCE-30}), we have $B$ of the form (\ref{min-orde-eq-2}).
Then
\begin{align}\nonumber
BA^{k}
&=
U
\left[
\begin{matrix}
   T  &      S-G_{1}^{-1}G_{2}P+G_{1}^{-1}G_{2}N  \\
       0   &       P
\end{matrix}
\right]
U^{*}U
\left[
\begin{matrix}
   T^{k}  &      \widehat{T}  \\
       0   &       0
\end{matrix}
\right]
U^{*}
\\
\nonumber
&
=
U
\left[
\begin{matrix}
   T^{k+1}  &      T\widehat{T}  \\
       0   &       0
\end{matrix}
\right]
U^{*}
=
U
\left[
\begin{matrix}
   T^{k+1}  &      \overline{T}  \\
       0   &       0
\end{matrix}
\right]
U^{*}
=
A^{k+1},
\\
\nonumber
A^{\sim}A^{k}
&=
GU
\left[
\begin{matrix}
   T^{*}  &      0  \\
       S^{*}   &       N^{*}
\end{matrix}
\right]
U^{*}GU
\left[
\begin{matrix}
   T^{k}  &      \widehat{T}  \\
       0   &       0
\end{matrix}
\right]
U^{*}
=
GU
\left[
\begin{matrix}
   T^{*}  &      0  \\
       S^{*}   &       N^{*}
\end{matrix}
\right]
\left[
\begin{matrix}
   G_{1}  &      G_{2}  \\
       G_{3}   &       G_{4}
\end{matrix}
\right]
\left[
\begin{matrix}
   T^{k}  &      \widehat{T}  \\
       0   &       0
\end{matrix}
\right]
U^{*}
\\
\nonumber
&
=
GU
\left[
\begin{matrix}
   T^{*}G_{1}T^{k}  &      T^{*}G_{1}\widehat{T}  \\
      S^{*}G_{1}T^{k}+N^{*}G_{3}T^{k}   &       S^{*}G_{1}\widehat{T}+N^{*}G_{3}\widehat{T}
\end{matrix}
\right]
U^{*},
\end{align}
and
\begin{align}
\nonumber
B^{\sim}A^{k}
&=
GU
\left[
\begin{matrix}
   T^{*}  &      0  \\
       (S-G_{1}^{-1}G_{2}P+G_{1}^{-1}G_{2}N)^{*}   &       P^{*}
\end{matrix}
\right]
U^{*}GU
\left[
\begin{matrix}
   T^{k}  &      \widehat{T}  \\
       0   &       0
\end{matrix}
\right]
U^{*}
\\
\nonumber
&
=
GU
\left[
\begin{matrix}
   T^{*}  &      0  \\
       (S-G_{1}^{-1}G_{2}P+G_{1}^{-1}G_{2}N)^{*}   &       P^{*}
\end{matrix}
\right]
\left[
\begin{matrix}
   G_{1}  &      G_{2}  \\
       G_{3}   &       G_{4}
\end{matrix}
\right]
\left[
\begin{matrix}
   T^{k}  &      \widehat{T}  \\
       0   &       0
\end{matrix}
\right]
U^{*}
\\
\nonumber
&
=
GU
\left[
\begin{matrix}
   T^{*}G_{1}T^{k}  &      T^{*}G_{1}\widehat{T}  \\
      S^{*}G_{1}T^{k}+N^{*}G_{2}^{*}T^{k}   &       S^{*}G_{1}\widehat{T}+N^{*}G_{2}^{*}\widehat{T}
\end{matrix}
\right]
U^{*}.
\end{align}
By applying
Remark \ref{WZHBAP-MT1-Remark},
we have $A^{\sim}A^{k}=B^{\sim}A^{k}$.
\end{proof}

\begin{theorem}
\label{WWL-MCE-th5.5}
Let $A, B\in\mathbb{C}_{n,n}$,
 ${\mbox{\rm Ind}}(A)=k$,
  ${\mbox{\rm Ind}}(B)=t$,
${\mbox{\rm rk}}\left(A^{k}\right)
={\mbox{\rm rk}}\left(\left(A^{k}\right)^{\sim}A^{k}\right)=r$
and
${\mbox{\rm rk}}\left(B^{t}\right)
={\mbox{\rm rk}}\left(\left(B^{t}\right)^{\sim}B^{t}\right)=s\geq r$.
 Then
 {{ \begin{align}
\label{WWL-MCE-40}
A \mathop
 \leq \limits ^ {\tiny\textcircled{E}} B\Leftrightarrow
\widehat{A_{1}}
\mathop
\leq \limits ^ {\tiny\textcircled{m}}
\widehat{B_{1}},
\end{align}}}
where
$A=\widehat{A_{1}}+\widehat{A_{2}}$
and
$B=\widehat{B_{1}}+\widehat{B_{2}}$
are
the ${\mathfrak{m}}$-core-EP decompositions of
$A$ and $B$,
respectively.
\end{theorem}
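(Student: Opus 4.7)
My plan is to use Theorem \ref{WWL-MCE-th4.2} to replace $A^{\tiny\textcircled{E}}$ with $\widehat{A_1}^{\tiny\textcircled{m}}$ and $B^{\tiny\textcircled{E}}$ with $\widehat{B_1}^{\tiny\textcircled{m}}$, and then exploit the orthogonality relations $\widehat{A_1}^{\tiny\textcircled{m}}\widehat{A_2}=0=\widehat{A_2}\widehat{A_1}^{\tiny\textcircled{m}}$ (established inside the proof of Theorem \ref{WWL-MCE-th4.2}) to simplify $A^{\tiny\textcircled{E}}A = \widehat{A_1}^{\tiny\textcircled{m}}\widehat{A_1}$ and $AA^{\tiny\textcircled{E}} = \widehat{A_1}\widehat{A_1}^{\tiny\textcircled{m}}$. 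Substituting into (\ref{WWL-MCE-30}) and expanding $B=\widehat{B_1}+\widehat{B_2}$, the relation $A\leq^{\tiny\textcircled{E}}B$ becomes
\[
\widehat{A_1}^{\tiny\textcircled{m}}\widehat{A_1} = \widehat{A_1}^{\tiny\textcircled{m}}\widehat{B_1} + \widehat{A_1}^{\tiny\textcircled{m}}\widehat{B_2}
\mbox{\ \ and\ \ }
\widehat{A_1}\widehat{A_1}^{\tiny\textcircled{m}} = \widehat{B_1}\widehat{A_1}^{\tiny\textcircled{m}} + \widehat{B_2}\widehat{A_1}^{\tiny\textcircled{m}},
\]
which differs from the m-core partial order condition $\widehat{A_1}\leq^{\tiny\textcircled{m}}\widehat{B_1}$ precisely by the cross-terms $\widehat{A_1}^{\tiny\textcircled{m}}\widehat{B_2}$ and $\widehat{B_2}\widehat{A_1}^{\tiny\textcircled{m}}$. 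Thus the whole equivalence reduces to showing that these two cross-terms vanish under the hypothesis of each direction.

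The $(\Leftarrow)$ direction I expect to be straightforward via Lemma (\ref{WWL-MCE-2222}). From $\widehat{A_1}\leq^{\tiny\textcircled{m}}\widehat{B_1}$ it yields $\widehat{A_1}^{\tiny\textcircled{m}} = \widehat{A_1}^{\tiny\textcircled{m}}\widehat{B_1}\widehat{B_1}^{\tiny\textcircled{m}} = \widehat{B_1}^{\tiny\textcircled{m}}\widehat{B_1}\widehat{A_1}^{\tiny\textcircled{m}}$, and combining with the $B$-analog $\widehat{B_1}^{\tiny\textcircled{m}}\widehat{B_2}=0=\widehat{B_2}\widehat{B_1}^{\tiny\textcircled{m}}$ of the orthogonality used above, I immediately obtain $\widehat{A_1}^{\tiny\textcircled{m}}\widehat{B_2} = \widehat{A_1}^{\tiny\textcircled{m}}\widehat{B_1}\widehat{B_1}^{\tiny\textcircled{m}}\widehat{B_2}=0$ and $\widehat{B_2}\widehat{A_1}^{\tiny\textcircled{m}}=\widehat{B_2}\widehat{B_1}^{\tiny\textcircled{m}}\widehat{B_1}\widehat{A_1}^{\tiny\textcircled{m}}=0$, closing the implication.

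The hard part will be the $(\Rightarrow)$ direction, where the cross-terms must vanish as a consequence of $A\leq^{\tiny\textcircled{E}}B$ alone. My approach is to leverage the explicit block form of $B$ derived in the preceding theorem (equation (\ref{min-orde-eq-2})) together with the further core-EP decomposition of the lower-right block $P = U_1\left[\begin{matrix}\widetilde{T} & \widehat{S}\\ 0 & \widehat{N}\end{matrix}\right]U_1^{*}$. Setting $\widehat{U} = U\,\mathrm{diag}(I_r, U_1)$, the core-EP decomposition of $B$ in the refined basis has invertible diagonal block of size $s$ and nilpotent part $\widehat{N}$, and formula (\ref{WWL-MCE-224}) applied to $B$ yields
\[
\widehat{B_2} = \widehat{U}\left[\begin{matrix} 0 & -\widehat{G_1}^{-1}\widehat{G_2}\widehat{N} \\ 0 & \widehat{N}\end{matrix}\right]\widehat{U}^{*},
\]
where $\widehat{G_1}\in\mathbb{C}_{s,s}$ and $\widehat{G_2}\in\mathbb{C}_{s,n-s}$ are the appropriate blocks of $\widehat{U}^{*}G\widehat{U}$. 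In the $r+(s-r)+(n-s)$ partition, the first block-column of $\widehat{B_2}$ is zero, so $\widehat{B_2}\widehat{A_1}^{\tiny\textcircled{m}}=0$ is immediate from the shape of $\widehat{A_1}^{\tiny\textcircled{m}}$ (whose only nonzero block sits in position $(1,1)$). For $\widehat{A_1}^{\tiny\textcircled{m}}\widehat{B_2}=0$, the block multiplication through $\widehat{U}^{*}G\widehat{U}$ will reduce to the top block row of the defining identity $\widehat{G_1}\left[\begin{matrix}C_1\\ C_2\end{matrix}\right] = -\widehat{G_2}\widehat{N}$ for the off-diagonal block of $\widehat{B_2}$, giving exactly $G_1 C_1 + G_{21}C_2 + G_{22}\widehat{N}=0$ and thus $\widehat{A_1}^{\tiny\textcircled{m}}\widehat{B_2}=0$. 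Once both cross-terms vanish, the reduced equations above match the $\widehat{A_1}\leq^{\tiny\textcircled{m}}\widehat{B_1}$ conditions exactly, and the equivalence (\ref{WWL-MCE-40}) follows.
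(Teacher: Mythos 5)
Your proposal is correct and follows essentially the same route as the paper: both reduce the problem via $A^{\tiny\textcircled{E}}=\widehat{A_{1}}^{\tiny\textcircled{m}}$ (Theorem \ref{WWL-MCE-th4.2}) to the vanishing of the cross-terms $\widehat{A_{1}}^{\tiny\textcircled{m}}\widehat{B_{2}}$ and $\widehat{B_{2}}\widehat{A_{1}}^{\tiny\textcircled{m}}$, prove the forward direction by explicit block computation of $\widehat{B_{2}}$ in the refined basis $\widehat{U}$ coming from (\ref{min-orde-eq-2}) and (\ref{MC-1-2}), and prove the converse from Lemma (\ref{WWL-MCE-2222}) together with $\widehat{B_{1}}^{\tiny\textcircled{m}}\widehat{B_{2}}=\widehat{B_{2}}\widehat{B_{1}}^{\tiny\textcircled{m}}=0$. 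The only cosmetic difference is that you identify $\widehat{B_{2}}$ through formula (\ref{WWL-MCE-224}) and uniqueness of the decomposition, whereas the paper recomputes it as $B-BB^{\tiny\textcircled{E}}B$; the underlying block verification is the same.
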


\begin{proof}
Let $A$ be as in (\ref{WWL-MCE-5}).
Applying (\ref{MC-1-3})
and (\ref{WWL-MCE-th3.2}),
then
\begin{align}
\nonumber
\widehat{A_{1}}
&=
AA^{\tiny\textcircled{E}}A
\\
\nonumber
&
=
\widehat{U}
\left[
\begin{matrix}
   T &     S_{1} & S_{2}   \\
       0 &  \widehat{N_{11}} &    \widehat{N_{12}}\\
       0& \widehat{N_{13}} & \widehat{N_{14}}
\end{matrix}
\right]
\left[
\begin{matrix}
   T^{-1}G_{1}^{-1} &     0  &0  \\
       0 &       0  & 0 \\
       0 & 0 & 0
\end{matrix}
\right]
\left[
\begin{matrix}
   G_{1} &     G_{21}  &G_{22}  \\
       G_{31} &       G_{41}  & G_{42} \\
       G_{32} & G_{43} & G_{44}
\end{matrix}
\right]
\left[
\begin{matrix}
   T &     S_{1} & S_{2}   \\
       0 &  \widehat{N_{11}} &     \widehat{N_{12}}\\
       0& \widehat{N_{13}} & \widehat{N_{14}}
\end{matrix}
\right]
\widehat{U}^{*}
\\
\nonumber
&
=
\widehat{U}
\left[
\begin{matrix}
   T &   S_{1}+G_{1}^{-1}(G_{21}\widehat{N_{11}}+G_{22}\widehat{N_{13}})  & S_{2}+G_{1}^{-1}(G_{21}\widehat{N_{12}}+G_{22}\widehat{N_{14}}) \\
       0 &       0 & 0  \\
       0 &      0 & 0
\end{matrix}
\right]
\widehat{U}^\ast
\end{align}
and
\begin{align}
\nonumber
\widehat{A_{2}}
&=
A-\widehat{A_{1}}
\\
\nonumber
&
=
\widehat{U}
\left[
\begin{matrix}
   0 &   -G_{1}^{-1}(G_{21}\widehat{N_{11}}+G_{22}\widehat{N_{13}})  & -G_{1}^{-1}(G_{21}\widehat{N_{12}}+G_{22}\widehat{N_{14}}) \\
       0 &       \widehat{N_{11}} & \widehat{N_{12}}  \\
       0 &      \widehat{N_{13}} & \widehat{N_{14}}
\end{matrix}
\right]
\widehat{U}^\ast.
\end{align}
Write
\begin{align}
\label{WWL-MCE-43}
\alpha
&
=
S_{1}
+G_{1}^{-1}(G_{21}\widehat{N_{11}}
-G_{21}\widetilde{T}
+G_{22}\widehat{N_{13}}),
\\
\label{WWL-MCE-44}
\beta
&
=
S_{2}+G_{1}^{-1}(G_{21}\widehat{N_{12}}
-G_{21}\widehat{S}
+G_{22}\widehat{N_{14}}
-G_{22}\widehat{N}).
\end{align}
{{ Let $B$
be as in (\ref{WWL-MCE-5.7}).
Applying (\ref{WWL-MCE-th3.2}),}}
we have
\begin{align}
\label{WWL-MCE-45}
B^{{\tiny\textcircled{E}}}
&
=
\widehat{U}
\left[
\begin{matrix}
       \left[\begin{matrix}
       T^{-1}   &
       - T^{-1} \alpha \widetilde{T}^{-1}
        \\
       0  &       \widetilde{T}^{-1}
\end{matrix}
\right] \widehat{G_1 }^{-1}
&
0\\
   0  &       0
\end{matrix}
\right]
\widehat{U}^\ast G.
\end{align}
By applying
{{(\ref{WWL-MCE-5.7})}}
and
(\ref{WWL-MCE-45}),
we obtain
\begin{align}
\label{WWL-MCE-46}
\nonumber
\widehat{B_{1}}
&=
BB^{\tiny\textcircled{E}}B
\\
\nonumber
&
=
\widehat{U}
\left[
\begin{matrix}
   T &     \alpha & \beta   \\
       0 & \widetilde{T} &    \widehat{S}\\
       0& 0 & \widehat{N}
\end{matrix}
\right]
\widehat{U}^{*}\widehat{U}
\left[
\begin{matrix}
       \left[\begin{matrix}
       T^{-1}   &
       - T^{-1} \alpha \widetilde{T}^{-1}
        \\
       0  &       \widetilde{T}^{-1}
\end{matrix}
\right] \widehat{G_1 }^{-1}
&
0\\
     0 &       0
\end{matrix}
\right]
\widehat{U}^\ast G\widehat{U}
\left[
\begin{matrix}
   T &     \alpha & \beta   \\
       0 & \widetilde{T} &    \widehat{S}\\
       0& 0 & \widehat{N}
\end{matrix}
\right]
\widehat{U}^{*}
\\
\nonumber
&
=
\widehat{U}
\left[
\begin{matrix}
\left[\begin{matrix}
T  &\alpha\\
0  &   \widetilde{T}
\end{matrix}
\right]
&
\left[\begin{matrix}
      \beta
        \\
       \widehat{S}
\end{matrix}
\right]\\
       0 &       \widehat{N}
\end{matrix}
\right]
\left[
\begin{matrix}
       \left[\begin{matrix}
       T^{-1}   &
       - T^{-1} \alpha \widetilde{T}^{-1}
        \\
       0  &       \widetilde{T}^{-1}
\end{matrix}
\right] \widehat{G_1 }^{-1}
&
0\\
     0 &       0
\end{matrix}
\right]
\left[
\begin{matrix}
   \widehat{G_{1}} &     \widehat{G_{2}}   \\
      \widehat{G_{3}} &     \widehat{G_{4}}
\end{matrix}
\right]
\left[
\begin{matrix}
\left[\begin{matrix}
T   &
\alpha
\\
0  &    \widetilde{T}
\end{matrix}
\right]
&
\left[\begin{matrix}
      \beta
        \\
       \widehat{S}
\end{matrix}
\right]\\
      0 &       \widehat{N}
\end{matrix}
\right]
\widehat{U}^{*}
\\
&
=
\widehat{U}
\left[
\begin{matrix}
\left[\begin{matrix}
T   &
\alpha
\\
0  &    \widetilde{T}
\end{matrix}
\right]
&
\left[\begin{matrix}
      \beta
        \\
       \widehat{S}
\end{matrix}
\right]+\widehat{G_{1}}^{-1}\widehat{G_{2}}\widehat{N}
\\
 0 &       {{ 0}}
\end{matrix}
\right]
\widehat{U}^{*}.
\end{align}
Then
\begin{align}
\widehat{B_{2}}
&=
B-\widehat{B_{1}}
=
\widehat{U}
\left[
\begin{matrix}
   0 &     -\widehat{G_{1}}^{-1}\widehat{G_{2}}\widehat{N}    \\
       0 & \widehat{N}
\end{matrix}
\right]
\widehat{U}^{*}.
\end{align}
Applying
\begin{align}
\widehat{A_{1}}^{\tiny\textcircled{m}}
&=
\widehat{U}
\left[
\begin{matrix}
   T^{-1}G_{1}^{-1} &    0 & 0    \\
       0 & 0 & 0 \\
       0 & 0 & 0
\end{matrix}
\right]
\widehat{U}^{*}G,
\end{align}
we get
\begin{align}
\nonumber
\widehat{A_{1}}^{\tiny\textcircled{m}}
\widehat{B_{2}}
&
=
\widehat{U}
\left[
\begin{matrix}
   T^{-1}G_{1}^{-1} &     0  &0  \\
       0 &       0  & 0 \\
       0 & 0 & 0
\end{matrix}
\right]
\widehat{U}^{*}G\widehat{U}
\left[
\begin{matrix}
   0 &     -\widehat{G_{1}}^{-1}\widehat{G_{2}}\widehat{N}   \\
       0 &  \widehat{N}
\end{matrix}
\right]
\widehat{U}^{*}
\\
\nonumber
&
=
\widehat{U}
\left[
\begin{matrix}
       \left[\begin{matrix}
       T^{-1}G_{1}^{-1}   &
       0
        \\
       0  &       0
\end{matrix}
\right]
&
0\\
0 &       0
\end{matrix}
\right]
\left[
\begin{matrix}
   \widehat{G_{1}} &     \widehat{G_{2}}   \\
      \widehat{G_{3}} &     \widehat{G_{4}}
\end{matrix}
\right]
\left[
\begin{matrix}
   0 &     -\widehat{G_{1}}^{-1}\widehat{G_{2}}\widehat{N}   \\
       0 &  \widehat{N}
\end{matrix}
\right]
\widehat{U}^{*}
\nonumber
=
0,
\\
\nonumber
\widehat{B_{2}}\widehat{A_{1}}^{\tiny\textcircled{m}}
&
=
\widehat{U}
\left[
\begin{matrix}
   0 &     -\widehat{G_{1}}^{-1}\widehat{G_{2}}\widehat{N}   \\
       0 &  \widehat{N}
\end{matrix}
\right]
\widehat{U}^{*}\widehat{U}
\left[
\begin{matrix}
   T^{-1}G_{1}^{-1} &     0  &0  \\
       0 &       0  & 0 \\
       0 & 0 & 0
\end{matrix}
\right]
\widehat{U}^{*}G
\\
\nonumber
&
=
\widehat{U}
\left[
\begin{matrix}
   0 &     -\widehat{G_{1}}^{-1}\widehat{G_{2}}\widehat{N}   \\
       0 &  \widehat{N}
\end{matrix}
\right]
\left[
\begin{matrix}
       \left[\begin{matrix}
       T^{-1}G_{1}^{-1}   &
       0
        \\
       0  &       0
\end{matrix}
\right]
&
0\\
       0  &       0
\end{matrix}
\right]
\widehat{U}^{*}G
\nonumber
=
0,
\\
\label{WWL-MCE-49}
\nonumber
\widehat{A_{1}}^{\tiny\textcircled{m}}\widehat{A_{2}}
&
=
\widehat{U}
{\footnotesize\left[
\begin{matrix}
   T^{-1}G_{1}^{-1} &    0 & 0    \\
       0 & 0 & 0 \\
       0 & 0 & 0
\end{matrix}
\right]}
\widehat{U}^{*}G\widehat{U}
{\footnotesize\left[
\begin{matrix}
   0 &   -G_{1}^{-1}(G_{21}\widehat{N_{11}}+G_{22}\widehat{N_{13}})  & -G_{1}^{-1}(G_{21}\widehat{N_{12}}+G_{22}\widehat{N_{14}}) \\
       0 &       \widehat{N_{11}} & \widehat{N_{12}}  \\
       0 &      \widehat{N_{13}} & \widehat{N_{14}}
\end{matrix}
\right]}
\widehat{U}^\ast
\\
\nonumber
&
=
\widehat{U}{\footnotesize
\left[
\begin{matrix}
   T^{-1}G_{1}^{-1} &    0 & 0    \\
       0 & 0 & 0 \\
       0 & 0 & 0
\end{matrix}
\right]
\left[
\begin{matrix}
   G_{1} &     G_{21}  &G_{22}  \\
       G_{31} &       G_{41}  & G_{42} \\
       G_{32} & G_{43} & G_{44}
\end{matrix}
\right]
\left[
\begin{matrix}
   0 &   -G_{1}^{-1}(G_{21}\widehat{N_{11}}+G_{22}\widehat{N_{13}})  & -G_{1}^{-1}(G_{21}\widehat{N_{12}}+G_{22}\widehat{N_{14}}) \\
       0 &       \widehat{N_{11}} & \widehat{N_{12}}  \\
       0 &     \widehat{N_{13}} & \widehat{N_{14}}
\end{matrix}
\right]}
\widehat{U}^\ast
\\
\nonumber
&
=
0
\end{align}
and
\begin{align}
\nonumber
\widehat{A_{2}}\widehat{A_{1}}^{\tiny\textcircled{m}}
&
=
\widehat{U}
{\footnotesize \left[
\begin{matrix}
   0 &   -G_{1}^{-1}(G_{21}\widehat{N_{11}}+G_{22}\widehat{N_{13}})  & -G_{1}^{-1}(G_{21}\widehat{N_{12}}+G_{22}\widehat{N_{14}}) \\
       0 &       \widehat{N_{11}} & \widehat{N_{12}}  \\
       0 &     \widehat{N_{13}} & \widehat{N_{14}}
\end{matrix}
\right]}
\widehat{U}^\ast\widehat{U}
{\footnotesize\left[
\begin{matrix}
   T^{-1}G_{1}^{-1} &    0 & 0    \\
       0 & 0 & 0 \\
       0 & 0 & 0
\end{matrix}
\right]}
\widehat{U}^{*}G
\\
\nonumber
&
=
\widehat{U}
{\footnotesize\left[
\begin{matrix}
   0 &   -G_{1}^{-1}(G_{21}\widehat{N_{11}}+G_{22}\widehat{N_{13}})  & -G_{1}^{-1}(G_{21}\widehat{N_{12}}+G_{22}\widehat{N_{14}}) \\
       0 &       \widehat{N_{11}} & \widehat{N_{12}}  \\
       0 &     \widehat{N_{13}} & \widehat{N_{14}}
\end{matrix}
\right]
\left[
\begin{matrix}
   T^{-1}G_{1}^{-1} &    0 & 0    \\
       0 & 0 & 0 \\
       0 & 0 & 0
\end{matrix}
\right]}
\widehat{U}^{*}G
\\
\nonumber
&
=
0.
\end{align}

Since
$A \mathop
\leq \limits ^ {\tiny\textcircled{E}} B$
and
$A^{\tiny\textcircled{E}}
=\widehat{A_{1}}^{\tiny\textcircled{m}}$,
 then
 $\widehat{A_{1}}^{\tiny\textcircled{m}}A
 =
 \widehat{A_{1}}^{\tiny\textcircled{m}}B$ and
 $A\widehat{A_{1}}^{\tiny\textcircled{m}}
 =
 B\widehat{A_{1}}^{\tiny\textcircled{m}}$. For
$A=\widehat{A_{1}}+\widehat{A_{2}}$ and
$B=\widehat{B_{1}}+\widehat{B_{2}}$,
we have
\begin{equation}
\nonumber
\left\{
\begin{aligned}
\widehat{A_{1}}^{\tiny\textcircled{m}}
\widehat{A_{1}}+\widehat{A_{1}}^{\tiny\textcircled{m}}\widehat{A_{2}}
=
\widehat{A_{1}}^{\tiny\textcircled{m}}\widehat{B_{1}}
+\widehat{A_{1}}^{\tiny\textcircled{m}}\widehat{B_{2}}
\\
\widehat{A_{1}}\widehat{A_{1}}^{\tiny\textcircled{m}}
+\widehat{A_{2}}\widehat{A_{1}}^{\tiny\textcircled{m}}
=
\widehat{B_{1}}\widehat{A_{1}}^{\tiny\textcircled{m}}
+\widehat{B_{2}}\widehat{A_{1}}^{\tiny\textcircled{m}}
\end{aligned}\right..
\end{equation}
It follows that
$\widehat{A_{1}}^{\tiny\textcircled{m}}\widehat{A_{1}}
=
\widehat{A_{1}}^{\tiny\textcircled{m}}\widehat{B_{1}}$
and
$\widehat{A_{1}}\widehat{A_{1}}^{\tiny\textcircled{m}}
=
\widehat{B_{1}}\widehat{A_{1}}^{\tiny\textcircled{m}}$,
that is,
$\widehat{A_{1}}
\mathop \leq \limits ^ {\tiny\textcircled{m}}
\widehat{B_{1}}$.
\bigskip

'$\Leftarrow$'
{{ Since $\widehat{A_{1}}
\mathop
\leq \limits ^ {\tiny\textcircled{m}}
\widehat{B_{1}}$,
by applying (\ref{WWL-MCE-3022}), we have  $$\widehat{A_{1}}^{\tiny\textcircled{m}}\widehat{A_{1}}
=
\widehat{A_{1}}^{\tiny\textcircled{m}}\widehat{B_{1}}
\mbox{\ and\ }
\widehat{A_{1}}\widehat{A_{1}}^{\tiny\textcircled{m}}
=
\widehat{B_{1}}\widehat{A_{1}}^{\tiny\textcircled{m}}.$$

By applying (\ref{WWL-MCE-22}), (\ref{WWL-MCE-2222})
and Theorem \ref{WWL-MCE-df4.1}, we have $\widehat{A_{2}}\widehat{A_{1}}^{\tiny\textcircled{m}}=0$, $\widehat{A_{1}}^{\tiny\textcircled{m}}\widehat{A_{2}}=\widehat{A_{1}}^{\tiny\textcircled{m}}\widehat{A_{1}}\widehat{A_{1}}^{\tiny\textcircled{m}}\widehat{A_{2}}
=\widehat{A_{1}}^{\tiny\textcircled{m}}(\widehat{A_{1}}\widehat{A_{1}}^{\tiny\textcircled{m}})^{\sim}\widehat{A_{2}}
=\widehat{A_{1}}^{\tiny\textcircled{m}}(\widehat{A_{1}}^{\tiny\textcircled{m}})^{\sim}\widehat{A_{1}}^{\sim}\widehat{A_{2}}=0$, $\widehat{A_{1}}^{\tiny\textcircled{m}}\widehat{B_{2}}=\widehat{A_{1}}^{\tiny\textcircled{m}}\widehat{B_{1}}\widehat{B_{1}}^{\tiny\textcircled{m}}\widehat{B_{2}}
=\widehat{A_{1}}^{\tiny\textcircled{m}}(\widehat{B_{1}}\widehat{B_{1}}^{\tiny\textcircled{m}})^{\sim}\widehat{B_{2}}
=\widehat{A_{1}}^{\tiny\textcircled{m}}(\widehat{B_{1}}^{\tiny\textcircled{m}})^{\sim}\widehat{B_{1}}^{\sim}\widehat{B_{2}}=0$
and
$\widehat{B_{2}}\widehat{A_{1}}^{\tiny\textcircled{m}}=\widehat{B_{2}}\widehat{B_{1}}^{\tiny\textcircled{m}}\widehat{B_{1}}\widehat{A_{1}}^{\tiny\textcircled{m}}
=0$.
Furthermore,
by applying (\ref{WWL-MCE-th4.2.1-2}),
we have
$A^{\tiny\textcircled{E}}\widehat{A_{1}}
=
A^{\tiny\textcircled{E}}\widehat{B_{1}}$,
$\widehat{A_{1}}A^{\tiny\textcircled{E}}
=
\widehat{B_{1}}A^{\tiny\textcircled{E}}$,
$A^{\tiny\textcircled{E}}\widehat{A_{2}}=\widehat{A_{1}}^{\tiny\textcircled{m}}\widehat{A_{2}}=0$,
$A^{\tiny\textcircled{E}}\widehat{B_{2}}=\widehat{A_{1}}^{\tiny\textcircled{m}}\widehat{B_{2}}=0$,
$\widehat{A_{2}}A^{\tiny\textcircled{E}}=\widehat{A_{2}}\widehat{A_{1}}^{\tiny\textcircled{m}}=0$
and
$\widehat{B_{2}}A^{\tiny\textcircled{E}}=\widehat{B_{2}}\widehat{A_{1}}^{\tiny\textcircled{m}}=0$.
Therefore,
it follows that
$A^{\tiny\textcircled{E}}A=A^{\tiny\textcircled{E}}\widehat{A_{1}}+A^{\tiny\textcircled{E}}\widehat{A_{2}}
=A^{\tiny\textcircled{E}}\widehat{A_{1}}
=A^{\tiny\textcircled{E}}\widehat{B_{1}}=A^{\tiny\textcircled{E}}\widehat{B_{1}}+A^{\tiny\textcircled{E}}\widehat{B_{2}}
=A^{\tiny\textcircled{E}}B$
and
$AA^{\tiny\textcircled{E}}=\widehat{A_{1}}A^{\tiny\textcircled{E}}+\widehat{A_{2}}A^{\tiny\textcircled{E}}
=\widehat{A_{1}}A^{\tiny\textcircled{E}}
=\widehat{B_{1}}A^{\tiny\textcircled{E}}=\widehat{B_{1}}A^{\tiny\textcircled{E}}+\widehat{B_{2}}A^{\tiny\textcircled{E}}
=BA^{\tiny\textcircled{E}}$, that is, $A
\mathop \leq \limits ^ {\tiny\textcircled{E}}
B$.}}
\end{proof}

\begin{example}
Let
\begin{align}
\label{WWL-MCE-53}
A=
\left[
\begin{matrix}
 1 &  2 & 3\\
   0     & 0  & 1\\
   0  & 0 & 0
\end{matrix}
\right]\
and\
B=
\left[
\begin{matrix}
 1     &  2  &3\\
   0   &  0  &0\\
   0   &0    & 0
\end{matrix}
\right].
\end{align}
Then $A \mathop \leq \limits ^ {\tiny\textcircled{E}} B$
and $B \mathop \leq \limits ^ {\tiny\textcircled{E}} A$.
However, $A\neq B$.
Therefore,
the {${\mathfrak{m}}$}-core-EP order is not antisymmetric.
\end{example}

\begin{theorem}
The {${\mathfrak{m}}$}-core-EP order
is not a partial order
but merely a pre-order.
\end{theorem}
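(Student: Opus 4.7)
The plan is to verify the two axioms needed for a pre-order (reflexivity and transitivity) and to point out that the axiom separating a pre-order from a partial order (antisymmetry) has already been falsified by the example preceding the statement. Thus the structure will be: reflexivity, then transitivity, then failure of antisymmetry.

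Reflexivity is immediate from the defining equivalence (\ref{WWL-MCE-30}): the identities $A^{\tiny\textcircled{E}}A=A^{\tiny\textcircled{E}}A$ and $AA^{\tiny\textcircled{E}}=AA^{\tiny\textcircled{E}}$ hold trivially whenever $A^{\tiny\textcircled{E}}$ exists, so $A\mathop\leq\limits^{\tiny\textcircled{E}} A$.

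For transitivity, suppose $A\mathop\leq\limits^{\tiny\textcircled{E}} B$ and $B\mathop\leq\limits^{\tiny\textcircled{E}} C$ (with the standing rank/index hypotheses on all three matrices). The cleanest route is to pass through the ${\mathfrak{m}}$-core-EP decompositions and invoke Theorem \ref{WWL-MCE-th5.5}, which identifies $\mathop\leq\limits^{\tiny\textcircled{E}}$ with the ${\mathfrak{m}}$-core partial order $\mathop\leq\limits^{\tiny\textcircled{m}}$ applied to the first summands: the two hypotheses translate to $\widehat{A_1}\mathop\leq\limits^{\tiny\textcircled{m}}\widehat{B_1}$ and $\widehat{B_1}\mathop\leq\limits^{\tiny\textcircled{m}}\widehat{C_1}$. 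Since the ${\mathfrak{m}}$-core partial order from \cite{Wang2019laa299} is transitive (being a partial order), we obtain $\widehat{A_1}\mathop\leq\limits^{\tiny\textcircled{m}}\widehat{C_1}$, and Theorem \ref{WWL-MCE-th5.5} then yields $A\mathop\leq\limits^{\tiny\textcircled{E}} C$. Alternatively one can argue directly by chaining the four equalities $A^{\tiny\textcircled{E}}A=A^{\tiny\textcircled{E}}B$, $B^{\tiny\textcircled{m}}B=B^{\tiny\textcircled{m}}C$ together with the absorption-type identity $A^{\tiny\textcircled{E}}=A^{\tiny\textcircled{E}}BB^{\tiny\textcircled{E}}$ (which follows from $\mathcal{R}(A^{\tiny\textcircled{E}})\subseteq\mathcal{R}(A^k)$ and $A\mathop\leq\limits^{\tiny\textcircled{E}} B$), but the reduction to $\mathop\leq\limits^{\tiny\textcircled{m}}$ is shorter and self-contained inside the present paper.

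Finally, the example immediately preceding the theorem exhibits matrices $A\neq B$ with both $A\mathop\leq\limits^{\tiny\textcircled{E}} B$ and $B\mathop\leq\limits^{\tiny\textcircled{E}} A$, so antisymmetry fails; thus $\mathop\leq\limits^{\tiny\textcircled{E}}$ is a pre-order but not a partial order. The only non-obvious step is the transitivity argument, and the potential pitfall there is making sure the rank and index hypotheses required to apply Theorem \ref{WWL-MCE-th5.5} are in force for both pairs $(A,B)$ and $(B,C)$; these are supplied by the blanket assumption that each matrix appearing in the order is ${\mathfrak{m}}$-core-EP invertible, which is exactly what Theorem \ref{WWL-MCE-TH3.2} guarantees under the rank condition (\ref{WWL-MCE-th3.2-2}).
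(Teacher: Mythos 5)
Your proposal is correct and follows essentially the same route as the paper: reflexivity from the definition, transitivity by passing through Theorem \ref{WWL-MCE-th5.5} to reduce to the transitive ${\mathfrak{m}}$-core partial order on the first summands $\widehat{A_1},\widehat{B_1},\widehat{C_1}$, and failure of antisymmetry via the preceding example. No gaps to report.
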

\begin{proof}
Reflexivity of the relation is obvious.
Suppose
$A \mathop \leq \limits ^ {\tiny\textcircled{E}} B$
and
$B \mathop \leq \limits ^ {\tiny\textcircled{E}} C$,
in which
$A=\widehat{A_{1}}+\widehat{A_{2}}$,
$B=\widehat{B_{1}}+\widehat{B_{2}}$
and
$C=\widehat{C_{1}}+\widehat{C_{2}}$
 are
 the ${\mathfrak{m}}$-core-EP decomposition
 of $A$, $B$ and $C$,
respectively.
Then $\widehat{A_{1}}
\mathop \leq \limits ^ {\tiny\textcircled{m}}
\widehat{B_{1}}$
and
$\widehat{B_{1}}
\mathop \leq \limits ^ {\tiny\textcircled{m}}
\widehat{C_{1}}$.
Therefore,
$\widehat{A_{1}} \mathop \leq \limits ^ {\tiny\textcircled{m}}
\widehat{C_{1}}$.
By applying
Theorem \ref{WWL-MCE-th5.5},
we have
$A
\mathop \leq \limits ^ {\tiny\textcircled{E}}
C$.
\end{proof}

\section*{ Acknowledgements}
The authors wish to extend  their sincere gratitude to the referees for their precious comments and suggestions.

\section*{Disclosure statement}
No potential conflict of interest was reported by the authors.

\section*{Funding}
The first  author
 was supported partially by the Research Fund Project of Guangxi University for Nationalities
  (No. 2019KJQD03),
 Guangxi Natural Science Foundation
 (No. 2018GXNSFAA138181)
 and the Special Fund for Bagui Scholars of Guangxi
 (No. 2016A17).
 The  second
 author was supported partially by
the Special Fund for Science and Technological Bases and Talents of Guangxi
 (No. GUIKE AD19245148) and
 New Centaury National Hundred, Thousand and Ten Thousand Talent Project of Guangxi
 (No. GUIZHENGFA210647HAO).
The  third author
was supported partially by the National Natural Science Foundation of China
 (No. 12061015), Guangxi Natural Science Foundation (No. 2018GXNSFDA281023),
 and High Level Innovation Teams and Distinguished Scholars in Guangxi Universities (No. GUIJIAOREN201642HAO).

%


\end{document}